\newtheorem{theorem}{Theorem}
\newtheorem{lemma}{Lemma}
\newtheorem{proposition}{Proposition}
\newtheorem{remark}{Remark}
\numberwithin{equation}{section}
\newcommand{\rev}[1]{\textcolor{black}{#1}}
\newcommand{\calG}{\ensuremath{\mathcal{G}}}
\newcommand{\calP}{\ensuremath{\mathcal{P}}}
\newcommand{\calS}{\ensuremath{\mathcal{S}}}
\newcommand{\calN}{\ensuremath{\mathcal{N}}}
\newcommand{\calK}{\ensuremath{\mathcal{K}}}
\newcommand{\calE}{\ensuremath{\mathcal{E}}}
\newcommand{\calL}{\ensuremath{\mathcal{L}}}
\newcommand{\calO}{\ensuremath{\mathcal{O}}}
\DeclareMathOperator{\rank}{rank}
\newcommand{\norm}[1]{\Big\|{#1} \Big\|}
\newcommand{\normnew}[1]{\|{#1}\|}
\newcommand{\set}[1]{\left\{{#1}\right\}}
\newcommand{\dotprod}[2]{\langle#1,#2\rangle}
\newcommand{\est}[1]{\widehat{#1}}
\newcommand{\expec}{\ensuremath{\mathbb{E}}}
\newcommand{\matR}{\ensuremath{\mathbb{R}}}
\newcommand{\matN}{\ensuremath{\mathbb{N}}}
\newcommand{\argmax}[1]{\underset{#1}{\operatorname{argmax}}}
\newcommand{\argmin}[1]{\underset{#1}{\operatorname{argmin}}}
\newcommand{\prob}{\ensuremath{\mathbb{P}}}
\newcommand{\indic}{\ensuremath{\mathbbm{1}}} 
\newcounter{ale}
\newenvironment{liste}{\begin{itemize}}{\end{itemize}}
\newcommand{\aliste}{\begin{liste} \setcounter{ale}{1}}
\newcommand{\zliste}{\end{liste}}
\definecolor{purple}{RGB}{178,55,250} %
\newcounter{noteHTctr} \setcounter{noteHTctr}{1}
\definecolor{applegreen}{rgb}{0.55, 0.71, 0.0}
\newcounter{noteEActr} \setcounter{noteEActr}{1}
\newcommand{\tx}{\tilde{x}}
\newcommand{\hX}{X^\#}
\newcommand{\hx}{x^\#}
\newcommand{\txi}{\tilde{\xi}}
\newcommand{\cvar}{\textbf{CVAR}($1,d,T;A^\ast,\pi^\ast,\sigma$)}
\newcommand{\cvarA}{\textbf{CVAR}($1,d,T;A,\pi,\sigma$) }
\newcommand{\MLEpi}{\est{\Pi}_{\operatorname{MLE}}}
\newcommand{\ty}{\tilde{y}}
\newcommand\withMaybeSmall{\ifthenelse{\boolean{withSmall}}{ \small }{}}
\begin{document}

\title{Matching correlated VAR time series\footnote{Authors are listed alphabetically}}

\author{Ernesto Araya{$^1$}\\ \texttt{araya@math.lmu.de} \and Hemant Tyagi\footnote{HT was supported by a Nanyang Associate Professorship (NAP) grant from NTU Singapore} {$^2$} \\ \texttt{hemant.tyagi@ntu.edu.sg}}


\date{$^1$Department of Mathematics, Ludwig-Maximilians-Universit\"at M\"unchen\\
$^2$Division of Mathematical Sciences, 
      SPMS, 
      NTU Singapore 637371}

\renewcommand{\thefootnote}{\arabic{footnote}}

\maketitle

\begin{abstract}
We study the problem of matching correlated VAR time series databases, where a multivariate time series is observed along with a perturbed and permuted version, and the goal is to recover the unknown matching between them. To model this, we introduce a probabilistic framework in which two time series $(x_t)_{t\in[T]},(\hx_t)_{t\in[T]}$ are jointly generated, such that $\hx_t=x_{\pi^*(t)}+\sigma \tilde{x}_{\pi^*(t)}$, where $(x_t)_{t\in[T]},(\tilde{x}_t)_{t\in[T]}$ are independent and identically distributed vector autoregressive (VAR) time series of order $1$ with Gaussian increments, for a hidden $\pi^*$. The objective is to recover $\pi^*$, from the observation of $(x_t)_{t\in[T]},(\hx_t)_{t\in[T]}$. This generalizes the classical problem of matching independent point clouds to the time series setting.

We derive the maximum likelihood estimator (MLE), leading to a quadratic optimization over permutations, and theoretically analyze an estimator based on linear assignment. For the latter approach, we establish recovery guarantees, identifying thresholds for $\sigma$ that allow for perfect or partial recovery. Additionally, we propose solving the MLE by considering convex relaxations of the set of permutation matrices (e.g., over the Birkhoff polytope). This allows for efficient estimation of $\pi^*$ and the VAR parameters via  alternating minimization. 
Empirically, we find that linear assignment  often matches or outperforms MLE relaxation based approaches. 
\end{abstract}

\textbf{Keywords:} geometric planted matching, vector autoregressive models, linear assignment estimator, non-sequence samples 

{
  \tableofcontents
}
%
%
%
%
\section{Introduction}\label{sec:introduction}
We consider the problem of matching two point clouds in $\matR^d$. 
Let $X,X^{\#} \in\matR^{d\times T}$ denote the matrices corresponding to the two point clouds, each containing $T$ points in $\matR^d$. We say that the point clouds are correlated if there exists a permutation\footnote{Extensions for which this holds only for a subset of $[m]$, considering sub-permutations, can be formulated analogously.} map $\pi^*:[T]\rightarrow [T]$ such that every column $x^{\#}_{\pi^*(t)}$ of $X^{\#}$ is correlated with the column $x_t$ of $X$. Given $X$ and $X^\#$, the goal then is to recover the unknown permutation $\pi^*$. This problem has a rich history with applications in computational geometry and computer vision, multi-object tracking etc. 

On the theoretical front, this has received attention recently when the columns of $X,X^\#$ are assumed to be random i.i.d vectors \cite{KuniskyWeed,schwengber2024geometricplantedmatchingsgaussian}. Specifically, \cite{KuniskyWeed} considered the setup where we first (a) draw $x_1,\dots,x_T \sim \calN(0,I_d)$  independently, then (b) draw the noise vectors $\tilde{x}_1,\dots,\tilde{x}_T \sim \calN(0, I_d)$  independently, and finally (c) obtain $x_t^\#$  as $$x_t^\# = x_{\pi^*(t)} + \sigma \tilde{x}_{\pi^*(t)} \ \text{ for } t=1,\dots,T.$$ 
It was shown that the maximum likelihood estimator (MLE), which is essentially a linear assignment problem, provably recovers $\pi^*$ provided the noise level $\sigma$ is less than a threshold. This was shown for different recovery criteria such as exact recovery and partial recovery (with sublinear number of errors). The setting in \cite{schwengber2024geometricplantedmatchingsgaussian} extended these results to more general distributions, along with information-theoretic lower bounds; see Section \ref{subsec:rel_work} for a more detailed overview of related work.

%
%

\paragraph{Motivation for matching time series data.}
The assumption that the points within a point cloud are drawn independently was motivated in \cite{KuniskyWeed} by a stylized model for multi-target tracking involving $T$ independent standard Brownian motions. Here, $x_1,\dots,x_T$ correspond to the position of the (unlabeled) particles at a given time instant, and $x_1^\#,\dots,x_T^\#$ corresponds to their positions at the next time instant. In this paper, we consider a generalization of this setting to one where $(x_1,\dots,x_T)$ and $(\tilde{x}_1,\dots,\tilde{x}_T)$ are the realization of a stochastic process, hence the individual $x_t$'s and $\tilde{x}_t$'s will be respectively dependent. The motivation for studying this \emph{time-series} setting comes from the following applications.
\begin{itemize}
    \item \textbf{Time stamp shuffling as a privacy mechanism.} One natural way to obfuscate temporal data is to release values while discarding or shuffling their time stamps. This was recently considered in the context of differential privacy  for sensitive time series data such as health care records, financial transactions etc. \cite{temporal_privacy21, temporal_privacy23}. The intuition is that the resulting data remains useful for certain aggregate statistics (e.g., mean) while concealing the temporal structure. Consider wage data: observing a person’s monthly income over several years without the ordering allows one to compute their average salary, but conceals whether their earnings followed a steady upward trend or fluctuated seasonally. From the shuffled sequence alone, these scenarios may look indistinguishable. This raises the question of whether time-stamp shuffling offers meaningful privacy protection: if an adversary has access to an auxiliary, correlated time series, they may be able to partially reconstruct the original ordering and recover sensitive temporal information.
    \item \textbf{Sensor fusion and lost timestamps.} In distributed sensing networks (seismology, wireless sensors, Internet of Things), different sensors may not be synchronized. One sensor provides a clean signal with timestamps, while another provides a noisy but related signal without ordering (e.g. due to packet loss, buffering, or asynchronous sampling). For e.g., in the Internet of Things context, one may have many cheap sensors scattered around, each with limited processing power and no global clock synchronization. The data streams can arrive out of order, delayed, or with missing timestamps, as reported in \cite{temporalOrder_networks11} when analyzing data from a real time system \cite{sensorNet_app_09}. A natural goal then would be to align unordered sensor readings with the reference signal by exploiting temporal correlations, thus improving reconstruction accuracy. 
    \item \textbf{Time series alignment.} A fundamental problem in time series analysis is to align different, potentially misaligned sequences that reflect the same underlying phenomenon. Misalignment may be caused by temporal stretching, delays, or nonlinear warping. In the classical formulation, each series preserves its internal temporal structure, and the task is to find an appropriate monotone correspondence between time indices. In contrast, a shuffled time-stamp setting can model more severe distortions such as measurement defects, packet loss, or corrupted logging systems, where the ordering of observations is partially lost. This makes the alignment problem more challenging and closer in spirit to matching under unknown permutations. Applications of time series alignment are widespread, e.g., in neuroscience \cite{Neural_Time_series_warping}, 
    speech and gesture recognition \cite{DTW_speech}, 
    and bioinformatics \cite{DTW_gene_expression} to name a few. 
    %
\end{itemize}
%
%
%
%

%
%
%
\subsection{Correlated VAR model for planted matching} \label{subsec:corr_var_model}
%
Given a matrix \( A^\ast \in \mathbb{R}^{d \times d} \), where \( d \in \mathbb{N} \), suppose $(x_t)_{t \in [T]}$ is generated as follows.
\begin{align}
    x_{t+1} &= A^\ast x_t + \xi_{t+1}, \quad \text{for } t = 1, \dots, T-1, 
    \\  
    x_1 &= \xi_1,  
\end{align}  
where $ (\xi_t)_{t \in [T]} $ is a sequence of i.i.d. standard Gaussian vectors in $\mathbb{R}^d $. The above model is a \emph{Vector Autoregressive} model of order $1$, hereby referred to as \textbf{VAR}($1$, $d$, $T$). The matrix $A^*$ contains the coefficients that determine this temporal relationship and is referred to as the \emph{system matrix}.

\paragraph{CVAR: A model for correlated time series.}
Let \( (\tilde{x}_t)_{t \in [T]} \) be an independent copy of the ``base'' time series \( (x_t)_{t \in [T]} \), drawn from \textbf{VAR}($1$, $d$, $T$), but with an independent sequence of i.i.d. standard Gaussian vectors \( (\tilde{\xi}_t)_{t \in [T]} \). Given a noise parameter \( \sigma \geq 0 \), we first construct the perturbed time series 
\begin{equation} \label{eq:def_Xprime}
    x'_t = x_t + \sigma \tilde{x}_t, \quad \text{for all } t \in [T].
\end{equation}  
Here, \( (x'_t)_{t \in [T]} \) is a noisy version of \( (x_t)_{t \in [T]} \), with \( \sigma \) controlling the noise magnitude. Finally, given a permutation \( \pi^\ast \), we define 
\begin{equation*}
    x^\#_t = x'_{\pi^\ast(t)}, \quad \text{for all } t \in [T].
\end{equation*}  
The above model is referred to as the correlated VAR model with parameters $A^*, \pi^*$ and $\sigma$, or \textbf{CVAR}($1$, \( d \), \( T \); \( A^\ast \), \( \pi^\ast \),~\( \sigma \)) in short. The pair \( \big( (x_t)_{t \in [T]}, (x^\#_t)_{t \in [T]} \big) \) is then a realization from this model.
Our goal is to recover the planted matching $\pi^*$ given the observations 
$((x_t)_{t \in [T]}, (x^\#_t)_{t \in [T]}),$ where the matrix $A^\ast$ is unknown. 
Before delving into strategies for estimating $\pi^*$, the following remarks are worth noting.
\begin{enumerate}
    \item The above setup is a generalization of the point clouds matching problem, presented in \cite{KuniskyWeed}. Indeed, if $A^\ast=0$, we have, for all $t\in[T]$, 
    \begin{align*}
    x_t = \xi_t \ \text{ and } \ \tx_t = \txi_t.
    \end{align*}
    In this case,  $x'_t=\xi_t+\sigma\txi_t$, which is identical to the setting in \cite{KuniskyWeed}.

    \item In the noiseless case ($\sigma=0$) the permutation can be perfectly recovered with a simple algorithm. It suffices to match $x_t$ with $x^\#_{t'}$ such that $x_t=x^\#_{t'}$. Given that, for any $s,t\in [T]$ with $t\neq s$, $\prob(x_t=x_s)=0$, the algorithm returns the correct permutation almost surely. 

    \item Notice that we assumed that the base time series is presented with its temporal ordering known. Hence, one may interpret the matching problem as that of recovering the temporal ordering of the unordered series $(x^\#_t)_{t \in [T]}$ using information available from $(x_t)_{t \in [T]}$. We touch upon this point briefly below, see Remarks \ref{rem:unord_base_time_series} and \ref{rem:LA_agnostic_temporal} for a more detailed explanation.
\end{enumerate}
%
\begin{remark}
Interpreting the time-stamps as labels for each data point in the time series, the assumption that the correct temporal order is known for the base time series corresponds to a standard assumption in data privacy applications. Specifically, in data de-anonymization settings, one database is assumed to be public with known labels, while the other must remain private. The goal in this context is to recover the private labels using the public database as a reference (see, e.g., \cite{Narayanan2008RobustDO} for a seminal work in this area).
\end{remark}

\subsection{Contributions}
Our contributions are outlined below.
\begin{enumerate}
    \item We propose, to our knowledge, a novel statistical model for planted matching in the context of time series data. For this model, we derive the MLE estimator for estimating $A^*, \pi^*$ which amounts to minimizing a biconvex objective subject to nonconvex constraints (due to the set of permutations). We formulate several methods to solve it based on the alternating minimization framework, by considering different convex relaxations of the set of permutation matrices. The empirical performance of these relaxations are compared through several experiments on synthetic data. Interestingly, the linear assignment (LA) estimator performs comparably to the best MLE relaxations even under high noise levels, when $\|A^*\|_2\leq 1$, raising the question of whether it achieves optimal, or near-optimal, performance in this regime. Experimentally, we found that for some values $\|A^*\|_2 > 1$, MLE relaxations slightly outperform the LA estimator.

    \item On the theoretical front, we analyze the statistical performance of the LA estimator for recovering $\pi^*$, which is well studied for geometric matching problems in the i.i.d setting \cite{KuniskyWeed,schwengber2024geometricplantedmatchingsgaussian}. This  estimator is model agnostic and also does not need the base time series to be temporally ordered (see Remark \ref{rem:LA_agnostic_temporal}). Assuming $\|A^*\|_2 < 1$, we derive bounds on the number of mismatched points by following the technique of counting augmenting paths, considered recently in \cite{KuniskyWeed} (and later in \cite{schwengber2024geometricplantedmatchingsgaussian}); see Theorem \ref{thm:main_upper_bound}. Theorem \ref{thm:main_upper_bound} shows various  thresholds on the noise level $\sigma$ which imply different levels of recovery of $\pi^*$ (e.g., exact recovery, partial recovery). It's statement is analogous to that obtained in \cite{KuniskyWeed}, up to an additional factor proportional to $(1-\|A^*\|_2)^5$ appearing in the bounds; see Remark \ref{rem:spec_grap_factor}. The proof, while along the lines of that in \cite{KuniskyWeed} is considerably more challenging -- not simply in the sense of more tedious calculations, but also in terms of technical difficulties imposed by the \textbf{CVAR} model; see Remark \ref{rem:comp_kunisky} for details.
\end{enumerate}

\subsection{Related work} \label{subsec:rel_work}
%
\paragraph{Matching point clouds. }As discussed earlier, our statistical model generalizes the i.i.d setting (within a point cloud) in \cite{KuniskyWeed} to the dependent setup, where each point cloud is a VAR time series. The work \cite{schwengber2024geometricplantedmatchingsgaussian} extended the setup in \cite{KuniskyWeed} to handle more general classes of distributions, and also provides information-theoretic lower bounds on the expected error for any estimator. The latter was achieved by making a connection with matchings in random geometric graphs. Some of the results in \cite{KuniskyWeed} were shown to be information theoretically optimal in \cite{pmlr-v178-wang22a}. While we do not study lower bounds for the \textbf{CVAR} model, it is an interesting and non-trivial direction to pursue for future work.

\paragraph{Feature matching.}A closely related problem referred to as the feature matching problem was studied in \cite{collier_2016}. Here the goal is to match two sets of points in $\matR^d$ (of potentially unequal size), and the proposed statistical model  assumes all the points to be independently generated Gaussian's. The means of the Gaussian distributions for one point cloud are denoted $(\theta^*_i)_i$ while that of the other point cloud are $(\theta^*_{\pi^*(i)})_i$, with $\pi^*$ the latent permutation. The performance of different estimators, including the LA estimator (referred therein as the least sum of squares estimator) was studied theoretically with aim of establishing the minimax rate of separation between the $\theta_i$'s for consistent recovery of $\pi^*$. This result was extended in \cite{outlierfeaturemap_2022} to the setting where the second point set can contain outliers.

\paragraph{Covariance alignment.}In \cite{Rig_Covariance_matching}, the authors, motivated by the feature matching problem, study the task of covariance matrix alignment. Specifically, two independent samples of i.i.d points are observed: one distributed as $\mathcal{N}(0, \Sigma)$, and the other as $\mathcal{N}(0, \Pi^* \Sigma {\Pi^*}^\top)$, where $\Pi^*$ is a hidden permutation matrix. In this setting, $\Sigma$ can be viewed as a nuisance parameter, and the goal is to align the sample covariance matrices of the two datasets. To recover $\Pi^*$, the authors derive a quasi–maximum likelihood estimator, which reduces to solving a quadratic optimization problem over the permutation set—an instance of the Quadratic Assignment Problem (QAP) (known to be NP-hard). They propose a relaxation over the Birkhoff polytope, referred to as the Gromov–Wasserstein estimator, due to its connection with optimal transport, and show that this estimator is minimax optimal. 

\paragraph{Graph matching. } In the graph matching problem, the goal is to find an assignment between the vertices of two graphs such that their edges are maximally aligned. This problem has found many applications in computer vision \cite{fifty_years_GM}, data de-anonymization \cite{Narayanan2008RobustDO} and protein-protein interactions \cite{Zaslavskiy2009}, to name a few. In the statistical version of the problem, the pair of graphs are realizations of a probabilistic model for correlated random graphs. The most popular models are the \emph{correlated Erd\H{o}s-R\'enyi} model \cite{Pedarsani} and the \emph{correlated Gaussian Wigner} model \cite{DingMaWuXu2021}. For both models, the MLE corresponds solving a QAP problem. Many algorithms rely on convex relaxations \cite{Aflalo_convexGM,relax_own_risk,fan2022spectralI,ArayaTyagi_GM_fods}, and some of our proposed relaxations for MLE in our setting draw inspiration from those of graph matching. A related line of work exists for recovering planted matchings in weighted bipartite graphs, without a latent geometric structure \cite{Ding2021ThePM, PhysRevE.102.022304, plantmatch_moharrami21}.

\paragraph{Learning dynamical systems from non-sequenced data.} The problem of learning dynamical systems from \emph{non-sequenced} data has received significantly less attention than the setting of sequenced observations. This was first proposed in \cite{HuangNonSeqLinSys09} for linear VAR models where it was assumed that multiple i.i.d realizations of the model are first generated and then a single state is sampled at random from each trajectory. An Expectation-Maximization (EM) algorithm was proposed and tested on synthetic data. This was extended to non-linear VAR models in \cite{huang2010NonlinNonseq} 
where the single-trajectory setting was also considered. For this setup, \cite{huang2010NonlinNonseq} proposed a convex program over the Birkhoff polytope for estimating the latent ordering of the points. The work \cite{huang2011AutoNonseq} considered linear VAR models where a small amount of non-sequenced data drawn from the stationary distribution of the model is also available. A penalized least-squares method was proposed where the penalization is based on the Lyapunov equation concerning the covariance matrix of the stationary distribution. Finally, \cite{huang2013NonSeqHMM} considered learning (first order or Hidden) Markov models from non-sequenced data, and proposed methods based on tensor decomposition along with theoretical guarantees.

\subsection{Notation}
We reserve lowercase letters for vectors and uppercase letters for matrices. 
For $x \in \mathbb{R}^d$, we write $\|x\|_2$ for the standard Euclidean ($\ell_2$) norm. Similarly, for a matrix $X \in \mathbb{R}^{k \times k'}$, $\|X\|_F$ denotes its Frobenius norm while $\normnew{X}_2$ denotes its spectral norm. Given matrices $M\in\matR^{k\times k'}$ and $N\in \matR^{l\times l'}$, the standard Kronecker product between $M$ and $N$ is denoted by $M \otimes N\in \matR^{kl\times k'l'}$.  
For a symmetric matrix $M \in \mathbb{R}^{k \times k}$, its eigenvalues are denoted by $\lambda_1(M) \geq \lambda_2(M) \geq \dots \geq \lambda_k(M)$.     

Given $T \in \mathbb{N}$, we let $\mathcal{S}_T$ denote the set of permutation maps on $\{1,\dots,T\}$ and $\mathcal{P}_T$ the set of corresponding permutation matrices. Sets will be usually denoted by calligraphic letters.

The notation $x \sim \mathcal{N}(\mu, \Sigma)$ specifies that $x$ is a Gaussian random vector with mean $\mu$ and covariance matrix $\Sigma$.  

For negative functions $f, g$, we will often write $f(x) = O(g(x))$ if there exists $c > 0$ and $x_0$ such that $f(x) \leq  c g(x)$ for all $x \geq x_0$. Moreover, we write $f(x) = \Omega(g(x))$ if $g(x) = O(f(x))$ holds, and write $f(x) = \Theta(g(x))$ if both $f(x) = O(g(x))$ and $g(x) = O(f(x))$ hold. Finally, we write $f(x) = o(g(x))$ if $\lim_{x \rightarrow \infty} \frac{f(x)}{g(x)} = 0$ and $f(x) = \omega(g(x))$ if $\lim_{x \rightarrow \infty} \frac{f(x)}{g(x)} = \infty$.

%
%
%
\section{MLE and the linear assignment estimator}\label{sec:mle_la_formulate}
%
%
\subsection{MLE for recovering \texorpdfstring{$\pi^\ast$}{pi*} and \texorpdfstring{$A^\ast$}{A*} when \texorpdfstring{$\sigma$}{sigma} is known}\label{sec:MLE_pi}

%
    We first derive the MLE for $\pi^*$ and $A^*$, given the observations $ \big( (x_t)_{t \in [T]}, (x^\#_t)_{t \in [T]} \big) $ generated from \textbf{CVAR}($1$, \( d \), \( T \); \( A^\ast \), \( \pi^\ast \),~\( \sigma \)). Notice that although our main goal is to recover $\pi^*$, in principle, the parameters $A^*$ and $\sigma$ are also unobserved. For convenience, we will assume that $\sigma$ is known. As we will see shortly, this assumption only enters the picture for obtaining the MLE of $A^*$. If $A^*$ is known, then the MLE of $\pi^*$ does not require $\sigma$ to be known. In the work \cite{KuniskyWeed} where $A^* = 0$, the MLE for $\pi^*$ (which is a linear assignment problem) was obtained for  $\sigma$ unknown.  
%
\begin{lemma}[MLE for CVAR given $\sigma$]\label{lem:MLE_sigma_known}
    %
    Given $\sigma$, the MLE for $(\pi^*,A^*)$ is found by solving 
    \begin{equation}\label{eq:MLE_sigma_known}
      \min_{\pi\in \calS_T, A\in\matR^{d\times d}  }\left\{ \sum^T_{t=1} \left(\|x_t-Ax_{t-1}\|^2_2+\frac1{\sigma^2}\|(x^\#_{\pi^{-1}(t)}-Ax^\#_{\pi^{-1}(t-1)})-(x_t-Ax_{t-1})\|^2_2 \right)\right\},
    \end{equation}
    where we set $\pi^{-1}(0), x^\#_0, x_0 \equiv 0$ for notational convenience.
\end{lemma}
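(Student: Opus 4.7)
The plan is to express the joint density of the observed pair $\big((x_t)_{t\in[T]},(x^\#_t)_{t\in[T]}\big)$, viewed as a function of the parameters $(\pi,A)$ at fixed $\sigma$, by pulling the likelihood back to the underlying innovation noise $(\xi_t,\tilde\xi_t)_{t=1}^T$, which is a collection of $2T$ independent standard Gaussian vectors in $\matR^d$. Since the log density of an isotropic standard Gaussian is (up to a constant) $-\tfrac{1}{2}\|\cdot\|_2^2$, once the change of variables is in place the log-likelihood becomes an explicit sum of squared norms of the reconstructed innovations, and the MLE objective follows by discarding terms that do not depend on $(\pi,A)$.

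Concretely, given a candidate pair $(\pi,A)$, I would define the \emph{reconstructed residuals}
\[
u_t := x_t - A\, x_{t-1}, \qquad v_t := x^\#_{\pi^{-1}(t)} - A\, x^\#_{\pi^{-1}(t-1)},
\]
with the convention $x_0 = x^\#_0 \equiv 0$ (so $\pi^{-1}(0)$ is never actually used). Under the true parameters $(\pi^*,A^*)$, inverting the two \textbf{VAR}($1$) recursions and the definition $x'_t = x_t + \sigma \tilde x_t$ gives $u_t = \xi_t$ and $v_t = \xi_t + \sigma \tilde\xi_t$, so that $\tilde\xi_t = (v_t-u_t)/\sigma$. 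The map $(\xi_{[T]},\tilde\xi_{[T]})\mapsto (u_{[T]},v_{[T]})$ thus has Jacobian determinant $\sigma^{-dT}$; the VAR recursions themselves are triangular with identity diagonal blocks and so contribute unit Jacobians; and finally the relabelling $x^\#_t = x'_{\pi(t)}$ is a permutation, also with unit Jacobian. Composing these three bijections, the joint density of $(x_{[T]},x^\#_{[T]})$ at parameters $(\pi,A,\sigma)$ equals $\sigma^{-dT}\prod_{t=1}^T \phi(u_t)\phi\big((v_t-u_t)/\sigma\big)$, where $\phi$ is the standard Gaussian density on $\matR^d$.

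Taking $-\log$, the terms depending on $(\pi,A)$ are exactly
\[
\tfrac{1}{2}\sum_{t=1}^{T}\|u_t\|_2^2 \;+\; \tfrac{1}{2\sigma^2}\sum_{t=1}^{T}\|v_t-u_t\|_2^2,
\]
while $dT\log\sigma$ and the Gaussian normalizing constants are absorbed into a $(\pi,A)$-independent additive term. Substituting the definitions of $u_t,v_t$ and dropping the common factor $\tfrac{1}{2}$ yields exactly the optimization problem in \eqref{eq:MLE_sigma_known}, as claimed.

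There is no deep obstacle here; the main thing to be careful about is verifying that each step of the change of variables really has a Jacobian independent of $(\pi,A)$ (so that maximizing the density is equivalent to minimizing the displayed quadratic), and that the boundary conventions $x_0 = x^\#_0 = 0$ correctly encode the initial conditions $x_1 = \xi_1$, $\tilde x_1 = \tilde\xi_1$ (so the $t=1$ terms in the sums genuinely correspond to the initial innovations rather than introducing spurious contributions).
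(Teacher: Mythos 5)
Your proposal is correct, and its core idea -- pull the likelihood back to the i.i.d.\ standard Gaussian innovations $(\xi_t,\tilde\xi_t)$ and read off the log-likelihood as a sum of squared norms -- is the same as in the paper. The presentation differs: the paper factorizes the joint density sequentially via the Markov structure, writing $f(x_t,x'_t\mid x_{t-1},x'_{t-1}) = f(x_t\mid x_{t-1},x'_{t-1})\, f(x'_t\mid x_t,x_{t-1},x'_{t-1})$ and identifying each conditional as a Gaussian (with the key algebraic step $x'_t = A x'_{t-1} + (x_t - A x_{t-1}) + \sigma\tilde\xi_t$), whereas you compute the density via a global change of variables, decomposing the map $(\xi_{[T]},\tilde\xi_{[T]})\mapsto(x_{[T]},x^\#_{[T]})$ into the triangular VAR recursion, the scaling by $\sigma$, and the relabeling permutation, and multiplying Jacobians. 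Both routes produce the identical density $\sigma^{-dT}\prod_t\phi(u_t)\phi\big((v_t-u_t)/\sigma\big)$ and hence the same objective. Your Jacobian bookkeeping is done carefully (the VAR maps are unit lower block-triangular, the permutation has unit Jacobian, and the $\sigma$-scaling contributes $\sigma^{-dT}$, correctly matching the $dT\log\sigma$ term that is subsequently discarded), and the observation that $\pi^{-1}(0)$ is never invoked because $x^\#_0\equiv 0$ is a nice touch. What the change-of-variables framing buys is that it makes the unit-Jacobian nature of the recursions explicit and avoids computing conditional means; what the paper's conditional factorization buys is that it exhibits the Markov structure directly, which is slightly more standard for time-series likelihoods. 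Either would be an acceptable proof.
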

\begin{proof}
   Let $(\xi_t)_{t\in[T]}, (\txi_t)_{t\in[T]}$ be two sequences of iid standard Gaussian vectors in dimension $d$. If $\big( (x_t)_{t \in [T]}, (x^\#_t)_{t \in [T]} \big)$ were generated according to the \cvarA model, we would have 
   \begin{align*}
     x_t = Ax_{t-1}+\xi_t \ \text{ and } \ 
     \tx_t = A\tx_{t-1}+\txi_t \quad \text{ for } t=2,\ldots,T,
   \end{align*}
   with $x_1=\xi_1$ and $\tx_1=\txi_1$.
On the other hand, $x'_t= x_t+\sigma \tx_t$ and $x^\#_t=x'_{\pi(t)}$. Define the negative log-likelihood function, including $\sigma$ as unobserved, by 
\begin{equation*}
   \calL(A,\pi,\sigma):= -\log{f_{A,\pi,\sigma}(x_1,x^\#_1,x_2,x^\#_2,\ldots,x_T,x^\#_T)},
\end{equation*}
where $f_{A,\pi,\sigma}$ denotes the joint density of $x_1,x^\#_1,x_2,x^\#_2,\ldots,x_T,x^\#_T$, under the \textbf{CVAR}($1,d,T;A,\pi,\sigma$) model. We use a similar notation to denote the density of marginals, e.g., $f_{A,\pi,\sigma}(x_t)$ denotes the (marginal) density of $x_t$.
First, note that 
\begin{align*}
    x'_t =x_t+\sigma \tx_t
    &=Ax_{t-1}+\xi_t+\sigma A\tx_{t-1}+\sigma \txi_t \\
    &=Ax'_{t-1}+\xi_t+\sigma \txi_t \\
    &=Ax'_{t-1}+x_t-Ax_{t-1}+\sigma \txi_t.
\end{align*}
Denoting\footnote{\rev{We keep the notation $f_{A,\pi,\sigma}$ for the density of $x_1,x'_1,\ldots,x_T,x'_T$.}} $f_{A,\pi,\sigma}(x_t,x'_t|x_{t-1},x'_{t-1})$ the density of $(x_t,x'_t)$ given $x_{t-1},x'_{t-1}$, we have for $t=2,\ldots,T$, 
\begin{align*}
    f_{A,\pi,\sigma}(x_t,x'_t|x_{t-1},x'_{t-1})&= \underbrace{f_{A,\pi,\sigma}(x_t|x_{t-1},x'_{t-1})}_{\sim \calN(Ax_{t-1},I_d)}\underbrace{f_{A,\pi,\sigma}(x'_t|x_t,x_{t-1},x'_{t-1})}_{\sim \calN(Ax'_{t-1}+x_t-Ax_{t-1},\sigma^2I_d)}\\
    &=  \frac{C}{\sigma^d}\exp\left(-\frac{\|x_t-Ax_{t-1}\|_2^2}{2}\right)\exp\left(-\frac{\|x'_t-Ax'_{t-1}-(x_t-Ax_{t-1})\|_2^2}{2\sigma^2}\right),
\end{align*}
where $C>0$ is a constant. The above is also valid for $t=1$, with the notation $x_0=x'_0=0$. With that in mind, we have $\rev{f_{A,\pi,\sigma}(x_1,x'_1,\dots,x_T,x'_T)} = \prod^T_{t=1}f_{A,\pi,\sigma}(x_t,x'_t|x_{t-1},x'_{t-1})$ which implies
\begin{align*}
-\log f_{A,\pi,\sigma}(x_1,x'_1,\ldots, x_T,x'_T)=& dT\log \sigma + \frac{1}{2} \sum^T_{t=1} \|x_t-Ax_{t-1}\|^2_2  + TC\\
&+\frac1{2\sigma^2}\sum^T_{t=1}\|(x'_t-Ax'_{t-1})-(x_t-Ax_{t-1})\|^2_2.
\end{align*}
Since $x^\#_t=x'_{\pi(t)}$, for $t=1,\ldots, T$, we obtain 
\begin{align*}
\calL(A,\pi,\sigma)=&dT\log \sigma + \frac{1}{2}\sum^T_{t=1}\|x_t-Ax_{t-1}\|^2_2 + TC \\
&+\frac1{2\sigma^2}\sum^T_{t=1}\|(x^\#_{\pi^{-1}(t)}-Ax^\#_{\pi^{-1}(t-1)})-(x_t-Ax_{t-1})\|^2_2.
\end{align*}
%
For a given $\sigma$, we arrive at the formulation in \eqref{eq:MLE_sigma_known} for finding the optimal $A, \pi$.
\end{proof}
\begin{remark}[MLE when $\sigma=0$]
In the case $\sigma=0$, the negative log-likelihood simplifies to  
\begin{equation*}
    \mathcal{L}(A,\pi,0) = \sum_{t=1}^{T} \|x_t - A x_{t-1}\|_2^2 - \sum_{t=1}^{T} \log \indic_{\{x_t = x^\#_{\pi^{-1}(t)}\}}.
\end{equation*}
As a result, the optimization problem  
\begin{equation*}
    \min_{\pi, A} \mathcal{L}(A, \pi, 0)
\end{equation*}
is separable. Consequently, the maximum likelihood estimate (MLE) for $\pi^*$ can be determined independently of $A^*$ using the simple algorithmic approach described in Section \ref{subsec:corr_var_model}. On the other hand, the MLE for $A^*$ is obtained by solving  
\begin{equation*}
    \min_{A \in \mathbb{R}^{d \times d}} \sum_{t=1}^{T} \|x_t - A x_{t-1}\|_2^2,
\end{equation*}
which corresponds to the problem of estimating the system matrix from a single observed realization of a time series. This is a well-known \emph{system identification} problem, which has been extensively studied in the literature (see e.g., \cite{FaraUnstable18,Simchowitz18a, Sarkar19, Jedra20}). To obtain recovery guarantees for that problem, $A^*$ is commonly assumed to be stable, i.e., its spectral radius lies within the unit circle. This is ensured by the stricter condition $\| A^* \|_2 < 1$.  Such a condition will also be needed in our analysis later on, for the estimation of $\pi^*$.
%
\end{remark}
%
%
%
      %
%
%

\paragraph{Notation.} In order to \rev{rewrite \eqref{eq:MLE_sigma_known} in} a more convenient form, the following notation will be useful.
\begin{itemize}
\item The shift operator $S\in \{0,1\}^{T\times T}$,
\[
S:=\begin{bmatrix}
0 & I_{T-1} \\
0 & 0 
\end{bmatrix}.
\]
\item The data matrices $X, X^\#\in \matR^{d\times T}$, where 
\begin{align*}
    X = [x_1 \ x_2 \ \ldots \ x_T] \ \text{ and } \ 
    X^\# = [x^\#_1 \ x^\#_2 \ \ldots \ x^\#_T].
\end{align*}
\item The permutation matrix $\Pi\in \calP_T$ corresponding to the map $\pi$,
\begin{equation*}
    \Pi=\begin{bmatrix}
      e^\top_{\pi(1)}\\
       e^\top_{\pi(2)}\\\
        \vdots\\
       \,e^\top_{\pi(T)}\,
    \end{bmatrix},
\end{equation*}
where we recall that $e_t$ corresponds to the $t$-th canonical (column) vector in $\matR^{T}$.
\end{itemize}
With this notation, \rev{\eqref{eq:MLE_sigma_known} can be} rewritten as
\begin{equation}\label{eq:MLE_2_sigma_known_mat}
    \min_{\Pi\in \calP_T,A\in \matR^{d\times d}}\left\{\|X-AXS\|^2_F+\frac1{\sigma^2}\|X^\#\Pi-AX^\#\Pi S-(X-AXS)\|^2_F\right\}.
\end{equation}
The following points are useful to note.
\begin{enumerate}
    \item  For a given $A$, observe from \eqref{eq:MLE_2_sigma_known_mat} that the optimal $\Pi$ is given by 
\begin{equation}\label{eq:MLE_pi_fixed_A_known_sigma}
    \est{\Pi}_{\operatorname{MLE}}(A) \in \argmin{\Pi\in\calP_T}\,\|X^\#\Pi-AX^\#\Pi S-(X-AXS)\|^2_F.
\end{equation}
This formulation defines an optimization problem with a convex quadratic objective function in $\Pi$ (as it corresponds to the squared norm of a linear function), subject to permutation constraints. Note that it does not require the knowledge of $\sigma$. While problem \eqref{eq:MLE_pi_fixed_A_known_sigma} is  combinatorial in nature, it is unclear whether it is NP-hard. Later on in Section \ref{sec:algos_pi_fixed_A}, we will consider solving its convex relaxations (see Algorithm \ref{alg:relaxMLE_round}) which can be solved efficiently and also perform well empirically. Note that if $A = 0$ then \eqref{eq:MLE_pi_fixed_A_known_sigma} reduces to the linear assignment problem (studied in \cite{KuniskyWeed}) which is efficiently solvable.

\item %
 For a given $\Pi$,  observe from \eqref{eq:MLE_2_sigma_known_mat} that the optimal $A$ is given by 
\begin{equation}\label{eq:MLE_A_fixed_pi_known_sigma}
    \est{A}_{\operatorname{MLE}}(\Pi) \in \argmin{A\in \matR^{d\times d}}\left\{\|X-AXS\|^2_F+\frac1{\sigma^2}\|\hX\Pi-A\hX\Pi S-(X-AXS)\|^2_F\right\},
\end{equation}
which can be solved in closed form. This is proven in the following lemma, whose proof is deferred to Appendix \ref{app:proof_MLE_A_pi_fixed_sigma_known}. Note that \eqref{eq:MLE_A_fixed_pi_known_sigma} requires knowledge of $\sigma$.
\end{enumerate}
\begin{lemma}\label{lem:MLE_A_pi_fixed_sigma_known}
For a given $\Pi \in \calP_T$, define $\est{A}_{\operatorname{MLE}}(\Pi)$ as in \eqref{eq:MLE_A_fixed_pi_known_sigma}. Then, it holds,
\begin{align*}
    \est{A}_{\operatorname{MLE}}(\Pi)=&\left[X(XS)^\top+\frac1{\sigma^2}\left(X^\# \Pi-X\right)\left(X^\# \Pi S-XS\right)^\top\right]\\
    &\times \left[(XS)(XS)^\top+\frac1{\sigma^2}(X^\# \Pi S-XS)(X^\# \Pi S-XS)^\top\right]^\dagger.
\end{align*}
\end{lemma}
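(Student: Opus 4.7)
The plan is to treat \eqref{eq:MLE_A_fixed_pi_known_sigma} as an unconstrained convex quadratic problem in the matrix variable $A$ and compute its stationary point by setting the matrix gradient to zero. To keep the algebra transparent, I would first introduce the shorthand $Y := X^{\#}\Pi - X$ and $Z := X^{\#}\Pi S - XS$, so that the inner discrepancy
$(X^{\#}\Pi - AX^{\#}\Pi S) - (X - AXS)$ becomes $Y - AZ$. The objective then reads
\begin{equation*}
 f(A) \;=\; \|X - A(XS)\|_F^2 \;+\; \frac{1}{\sigma^2}\,\|Y - AZ\|_F^2,
\end{equation*}
which is a sum of two convex quadratic forms in $A$, hence convex, so any stationary point is a global minimizer.

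Next, I would use the standard identity $\nabla_A \|M - AN\|_F^2 = -2(M - AN)N^{\top}$, applied to each of the two terms, to obtain
\begin{equation*}
 \tfrac{1}{2}\nabla_A f(A) \;=\; -X(XS)^{\top} + A(XS)(XS)^{\top} \;-\; \tfrac{1}{\sigma^2}YZ^{\top} + \tfrac{1}{\sigma^2}AZZ^{\top}.
\end{equation*}
Setting this to zero gives the normal equation $A\,M = N$, where
$M := (XS)(XS)^{\top} + \tfrac{1}{\sigma^2}ZZ^{\top}$ and $N := X(XS)^{\top} + \tfrac{1}{\sigma^2}YZ^{\top}$. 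Substituting back the definitions of $Y$ and $Z$ recovers precisely the two bracketed expressions in the statement of the lemma.

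The last step is to solve $AM = N$. If $M$ is invertible, we simply set $A = NM^{-1}$. In general, $M$ is only symmetric positive semidefinite, so one must use the pseudoinverse. Since $f$ is bounded below and convex, a minimizer exists and thus the normal equation is consistent; in particular, the row space of $N$ lies in the row space of $M$, which implies $N = NM^{\dagger}M$. Consequently $A = NM^{\dagger}$ is a valid solution of $AM = N$, and by convexity it is a global minimizer of \eqref{eq:MLE_A_fixed_pi_known_sigma}. Plugging the expressions for $N$ and $M$ back yields exactly the closed-form formula claimed for $\est{A}_{\operatorname{MLE}}(\Pi)$.

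The only delicate point is the handling of the pseudoinverse in the rank-deficient case, which is really just bookkeeping about column/row spaces; no serious obstacle is expected, and the bulk of the argument is just differentiation of a Frobenius-norm quadratic.
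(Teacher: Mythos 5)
Your proof is correct and arrives at the same normal equation as the paper. The paper reaches the stationarity condition by vectorizing the objective (setting $a=\operatorname{vec}(A)$ and working with Kronecker products $(XS)^\top\otimes I$, etc.) before ``de-vectorizing'' the resulting linear system, whereas you differentiate directly in matrix form via $\nabla_A\|M-AN\|_F^2=-2(M-AN)N^\top$. Both routes yield $A\big[(XS)(XS)^\top+\tfrac1{\sigma^2}ZZ^\top\big]=X(XS)^\top+\tfrac1{\sigma^2}YZ^\top$ and then invoke the pseudoinverse; your version is somewhat more economical since it avoids the Kronecker-product bookkeeping, and you also explicitly address the consistency of the normal equation in the rank-deficient case, a point the paper leaves implicit. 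One could tighten your consistency remark by observing directly that the row space of $N=X(XS)^\top+\tfrac1{\sigma^2}YZ^\top$ lies in the column space of the block matrix $[\,XS \ \ \tfrac1\sigma Z\,]$, which equals the column space of $M$; this avoids appealing to attainment of the infimum.
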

%
%
As discussed in Section \ref{sec:algos_iter}, one can formulate an efficient alternating minimization heuristic (see Algorithm \ref{alg:TS_matching_alternating}) for solving \eqref{eq:MLE_2_sigma_known_mat}, by iteratively solving (i) an efficiently solvable relaxation of \eqref{eq:MLE_pi_fixed_A_known_sigma} to first obtain $\est{\Pi}$, and (ii) then using $\est{\Pi}$ in \eqref{eq:MLE_A_fixed_pi_known_sigma} to obtain $\est{A}_{\operatorname{MLE}}(\est{\Pi})$.
%
%
%
\begin{remark}[Unordered base time-series] \label{rem:unord_base_time_series}
Suppose that the temporal ordering of the base time series $(x_t)_{t=1}^T$ was unknown, which simply means that we are presented with a sequence $(y_t)_{t=1}^T$ where $y_{t} = x_{{\pi_1^*}(t)}$ for an unknown permutation $\pi_1^*$. Then, the unknown parameters are $\pi_1^*, \pi^* \in \calS_T$ and $A^*$. Denote $Y = [y_1 \cdots y_T]$, so that \rev{$Y = X (\Pi_1^*)^\top$}, with $\Pi_1^* \in \calP_T$ the permutation matrix corresponding to the map $\pi_1^*$. Then it is easy to verify that the MLE \eqref{eq:MLE_2_sigma_known_mat} changes to
\begin{equation}\label{eq:MLE_unord_base_series_sigma_known_mat}
    \min_{\substack{\Pi_1, \Pi \in \calP_T\\ A\in \matR^{d\times d}}}\left\{\|Y \Pi_1-AY\Pi_1 S\|^2_F+\frac1{\sigma^2}\|X^\#\Pi-AX^\#\Pi S-(Y \Pi_1-AY\Pi_1 S)\|^2_F\right\}.
\end{equation}
As before, we can attempt solving \eqref{eq:MLE_unord_base_series_sigma_known_mat} by alternating between updates to $A$ and $(\Pi, \Pi_1)$; note that for a given $A$ the objective in \eqref{eq:MLE_unord_base_series_sigma_known_mat} is convex in $(\Pi, \Pi_1)$. Clearly, the estimated maps $\est{\pi}_1$, $\est{\pi}$ can then be used to recover the correspondence between $(y_t)_{t=1}^T$ and $(x^{\#}_t)_{t=1}^T$.
\end{remark}
\subsection{Linear assignment approach for estimating  \texorpdfstring{$\pi^\ast$}{pi}}\label{sec:LA}
Given the difficulties of provably solving (and analyzing) the MLE given in \eqref{eq:MLE_2_sigma_known_mat}, we consider estimating $\Pi^*$ via linear assignment (LA). This consists in solving the (linear) optimization problem
\begin{equation}\label{eq:LA_pi}
    \est{\Pi}_{\operatorname{LA}}\in \argmax{\Pi\in\calP_T}\,\langle \hX\Pi,X\rangle_F=\langle \Pi,\underbrace{(\hX)^\top X}_{=:W}\rangle_F,
\end{equation}
which does not require $A^*$ or $\sigma$ to be known. Recall that even if $A^*$ was known, finding $\MLEpi(A^*)$ would involve solving the quadratic problem \eqref{eq:MLE_pi_fixed_A_known_sigma}, which in general appears to be a hard problem, as discussed earlier. In contrast, the problem \eqref{eq:LA_pi} can be efficiently solved, using the Hungarian method \cite{Kuhn1955}, for instance. The matrix $W$ can be viewed as a similarity matrix, whose entry $(t,t')$, defined as $\langle \hx_t,x_{t'}\rangle$, represents the similarity between $\hx_t$ and $x_{t'}$.

As remarked earlier, \eqref{eq:LA_pi} was recently used in the problem of matching point clouds (see for instance \cite{KuniskyWeed, schwengber2024geometricplantedmatchingsgaussian}) which corresponds to our setup with $A^* = 0$. In particular, \eqref{eq:LA_pi} is the MLE for $\pi^*$ in that case for a fixed (not necessarily known) $\sigma$. As we will see below, this method is also meaningful in our more general setup where $A^* \neq 0$ necessarily. This is not obvious as the temporal correlation in our problem distinguishes it from the uncorrelated (i.e., $A^*=0$) case. Also note that \eqref{eq:LA_pi} does not require knowledge of $A^*$. 

In that sense, our main guiding question is 
\begin{quotation}
 \emph{``Under what conditions on $A^*$ and $\sigma$ can guarantees be established for the linear assignment estimator $\est{\Pi}$, defined in \eqref{eq:LA_pi}, to solve the VAR permutation regression problem?''}
\end{quotation}
We now give our main result regarding the recovery guarantees of the LA estimator. Following \cite{KuniskyWeed}, we distinguish three regimes of recovery, ordered from stronger to weaker: perfect recovery, constant error, and sublinear error.  
For the upper bounds presented here, the applicable regime depends on the assumptions imposed on the noise level~$\sigma$.
%
%
%
\begin{remark}\label{rem:LA_agnostic_temporal}
Continuing Remark \ref{rem:unord_base_time_series}, note that LA is agnostic to the temporal nature of the data in terms of its formulation -- it simply finds a correspondence between the columns of $X$ and $X^\#$. This means that LA run on the matrices $Y$ and $X^\#$ (where $Y$ is an unknown column-shuffling of $X$) would find a matching between the columns of $Y$ and $X^\#$. It will not recover the underlying temporal ordering for the respective time-series (unless of course $\pi_1^*$ is known to be identity).
\end{remark}
\begin{theorem}\label{thm:main_upper_bound}
    Let $s_0 := 2^{1/d}$, and let $A^* \in \matR^{d \times d}$ be such that $\|A^*\|_2 < 1$.  
    Let $X,\hX \in \matR^{d\times T}$ be observed data from the \cvar\, model, and denote $\est{\pi}_{\operatorname{LA}}$, the permutation map corresponding to the linear assignment estimator defined in \eqref{eq:LA_pi}. Define, 
    \[
    \calE:=\{t\in[T]: \est{\pi}_{\operatorname{LA}}(t)\neq \pi^*(t)\}\,
    \]
    the set of mismatched indices by $\est{\pi}_{\operatorname{LA}}$. Then the following three statements hold.
    \begin{enumerate}[label=(\roman*)]
        \item If \[
        \sigma^2\leq \frac{(1-\|A^*\|_2)^5}{4(s^{\omega(1)}_0T^{4/d}-1)}.
        \]
        Then we have $\expec[|\calE|]\to 0$, when $T\to\infty$. In particular, $|\calE|=0$ with high probability.

        \item If \[
        \sigma^2\leq \frac{(1-\|A^*\|_2)^5}{4(s^{O(1)}_0T^{4/d}-1)}.
        \]
        Then, $\expec[|\calE|]=O(1)$. In particular, for any function $f(T)=w(1)$, we have $|\calE|\leq f(T)$ with high probability.

        \item If \[
        \sigma^2\leq \frac{(1-\|A^*\|_2)^5}{4(s^{\omega(1)}_0T^{2/d}-1)}
        \]
        Then, 
        \begin{align*}
            \expec[|\calE|]=\calO\left(\left(\frac{(1-\|A^*\|_2)^5}{4\sigma^2}+1\right)^{-\frac d2}T^2\right).
        \end{align*}
    \end{enumerate}
\end{theorem}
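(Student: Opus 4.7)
The plan is to follow the augmenting-path framework of Kunisky and Weed, adapted to the temporally correlated CVAR setting. Setting $\psi := \est{\pi}_{\operatorname{LA}} \circ (\pi^*)^{-1}$ and decomposing $\psi$ into cycles, one has $|\calE| = \sum_{k\geq 2} k\, N_k$, where $N_k$ is the number of $k$-cycles of $\psi$. The first-moment bound is
\[
\expec[|\calE|] \;\leq\; \sum_{k \geq 2} T^k\, \sup_{(t_i)} \prob(\calB_{t_1,\ldots,t_k}),
\]
where $\calB_{t_1,\ldots,t_k}$ is the event that a cycle on $(t_1,\ldots,t_k)$ is preferred by the LA objective \eqref{eq:LA_pi}. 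Writing $x^\#_{(\pi^*)^{-1}(t)} = x_t + \sigma \tilde x_t$ and telescoping, this event becomes
\[
\calB_{t_1,\ldots,t_k} = \Big\{\, \tfrac12 \sum_{i=1}^k \|x_{t_i}-x_{t_{i+1}}\|_2^2 + \sigma \sum_{i=1}^k \langle \tilde x_{t_i} - \tilde x_{t_{i+1}},\, x_{t_i}\rangle \;\leq\; 0 \,\Big\},
\]
which for $k=2$ reduces to $\|v\|_2^2 + \sigma \langle \tilde v, v\rangle \leq 0$, with $v := x_t - x_s$ and $\tilde v := \tilde x_t - \tilde x_s$.

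The next step is to bound $\prob(\calB_{t,s})$ for $k=2$, which drives all three regimes. The vectors $v$ and $\tilde v$ are independent Gaussians with a common covariance $\Sigma_{t,s}$, computed explicitly from $x_t = \sum_{k=0}^{t-1}(A^*)^k \xi_{t-k}$, and satisfying $I \preceq \Sigma_{t,s} \preceq \frac{c}{1-\|A^*\|_2}\,I$ (the lower bound uses the $k=0$ term of the expansion; the upper bound uses $\|A^*\|_2<1$). Conditioning on $v$, we have $\langle \tilde v, v\rangle \mid v \sim \calN(0,\, v^\top \Sigma_{t,s} v)$, so a Gaussian tail estimate combined with $v^\top \Sigma_{t,s} v \leq \|\Sigma_{t,s}\|_2\, \|v\|_2^2$ yields $\prob(\calB_{t,s}\mid v) \leq \exp(-\|v\|_2^2/(2\sigma^2 \|\Sigma_{t,s}\|_2))$. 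Averaging over $v \sim \calN(0,\Sigma_{t,s})$ via the chi-squared moment generating function, one arrives at
\[
\prob(\calB_{t,s}) \;\leq\; \Big(1 + \tfrac{(1-\|A^*\|_2)^\alpha}{c\sigma^2}\Big)^{-d/2},
\]
where $\alpha$ collects the spectral-gap factors coming from the anti-concentration of $\|v\|_2$ and from the operator-norm bound on $\Sigma_{t,s}$; being pessimistic one arrives at $\alpha = 5$ (cf.\ Remark \ref{rem:spec_grap_factor}). For $k \geq 3$ the same conditional-Gaussian strategy, applied to the stacked vectors $(x_{t_i}-x_{t_{i+1}})_i$ and $(\tilde x_{t_i}-\tilde x_{t_{i+1}})_i$, gives $\prob(\calB_{t_1,\ldots,t_k}) \lesssim [\prob(\calB^{(2)})]^{k-1}$ up to further $(1-\|A^*\|_2)$ factors, so the $k=2$ term dominates whenever $T \cdot \prob(\calB^{(2)}) = o(1)$.

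It remains to plug the three regimes into $\expec[|\calE|] \leq T(T-1)\,\prob(\calB^{(2)}) + (\text{higher-order})$. In case (i), the assumed threshold forces $(1+(1-\|A^*\|_2)^5/(4\sigma^2))^{d/2} \geq s_0^{\omega(1)/2}\, T^2$, so $\expec[|\calE|] = o(1)$, and Markov's inequality gives $|\calE|=0$ with high probability. Case (ii) is identical with $O(1)$ in place of $o(1)$, yielding $\expec[|\calE|] = O(1)$ and hence $|\calE| \leq f(T)$ w.h.p.\ for any $f=\omega(1)$. In case (iii), the weaker hypothesis still ensures $\prob(\calB^{(2)}) = o(1/T)$, so longer cycles remain negligible, and substituting the closed-form bound on $\prob(\calB^{(2)})$ gives the stated rate. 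The main obstacle, and the source of the quantitative gap with respect to \cite{KuniskyWeed}, is the $k \geq 3$ analysis: unlike in the i.i.d.\ case, the marginals $x_{t_1},\ldots,x_{t_k}$ have non-trivial cross-covariances determined by powers of $A^*$ at the various time gaps, so the joint anti-concentration of $\sum_i \|x_{t_i}-x_{t_{i+1}}\|_2^2$ and the conditional variance of the noise term require a careful stacked-Gaussian analysis, and tracking the propagation of $(1-\|A^*\|_2)$ factors cleanly through that analysis is the delicate technical part.
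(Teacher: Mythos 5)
Your scaffolding — reduce to augmenting cycles, first-moment bound, condition on the base process to make the noise contribution conditionally Gaussian, then integrate the Gaussian tail against the $\chi^2$-type density — matches the paper's strategy, and your $k=2$ analysis is essentially correct: the covariance bounds $I_d \preceq \Sigma_{t,s} \preceq \tfrac{5}{1-\|A^*\|_2^2}I_d$ are verified in the paper's Lemma~\ref{lem:Gaussian_2cycles}, and averaging $\exp(-c\,\|v\|_2^2)$ over $v$ produces the $\det(\cdot)^{-1/2}$ expression exactly as in \eqref{eq:2cycle_fin_bound}.

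The genuine gap is in the step you flag yourself as ``the delicate technical part'': the claim
\[
\prob(\calB_{t_1,\ldots,t_k}) \;\lesssim\; \bigl[\prob(\calB^{(2)})\bigr]^{k-1}
\]
for $k\geq 3$ is not an afterthought but is essentially the entire content of the paper's Propositions~\ref{prop:tcycles} and \ref{prop:det_bound}, and it is far from following by ``the same conditional-Gaussian strategy.'' Conditioning on $(\xi_i)$ and computing the conditional variance for a general $t$-cycle produces the quadratic form $\xi^\top L\,\xi$ with $L = P(L_{C_t}\otimes I_d)P^\top$, where $P$ stacks Krylov-type blocks in the powers of $A^*$; to extract the crucial exponent $-(t-1)d/2$ from $\det(\gamma L + I)^{-1/2}$ one must determine $\operatorname{rank}(L) = (t-1)d$ and lower-bound the pseudo-determinant $\det^*(L)$. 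The paper does this via a non-trivial factorization (Lemma~\ref{lem:pseudodet_gen_PLPt}), Kirchhoff's matrix-tree theorem to evaluate $\det^*(L_{C_t}) = t^2$, an Ostrowski-type inequality (Lemma~\ref{lem:bound_PUtPU}), an explicit product formula for $\det(P^\top P)$ (Lemma~\ref{lem:det_PtP}), and a block Gershgorin bound for $\lambda_1(P^\top P)$ (Lemma~\ref{lem:gerschgorin_PtP}). Without this chain, nothing rules out the $(1-\|A^*\|_2)$ factors deteriorating with $k$ (your hedge ``up to further $(1-\|A^*\|_2)$ factors'' is precisely the danger: if the exponent $5$ were replaced by something growing in $k$, the geometric sum over cycle lengths would not behave as needed, and the stated thresholds would not follow). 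The paper's key achievement is showing the factor is \emph{uniform in $t$}. So your proposal is an accurate outline of the argument's skeleton, but the core spectral estimate that makes the theorem true is asserted rather than proved.
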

Interestingly, the theorem recovers the results of \cite{KuniskyWeed}, up to a multiplicative factor proportional to $(1-\norm{A^*}_2)^5$.  
In particular, when $\|A^*\|_2 = 0$, parts~$(i)$ and~$(ii)$ yield the same noise condition as in their work, except for a factor of $4$ in the denominator.  
In the case of part~$(iii)$, our condition is also of the same order as theirs. 

We present the proof of Theorem \ref{thm:main_upper_bound} in the next section. 
As we will see, the time-series case introduces additional technical challenges, some of which lead to results that may be of independent interest.   
%
%
\begin{remark}[On the factor $(1-\|A^*\|_2)^5$] \label{rem:spec_grap_factor}
 Theorem \ref{thm:main_upper_bound} requires $\normnew{A^*}_2 < 1$, which is stronger than requiring $\rho(A^*) < 1$. Such stability assumptions on $A^*$ are common for parameter estimation problems for VAR models. While recent results have tackled this under weaker conditions -- such as that of marginal stability wherein $\rho(A^*) \leq 1$ (see \cite{Simchowitz18a, Sarkar19}), or even unstable systems (e.g., \cite{FaraUnstable18, Sarkar19}) -- a number of prior results consist of estimation error bounds which tend to worsen as $\normnew{A^*}_2$ approaches one (e.g., \cite{Fang15, Loh12}). In that sense, we believe that the condition $\normnew{A^*}_2 < 1$ needed in our results is an artefact of the proof, and can be weakened as well. Establishing this is an interesting direction for future work.
\end{remark}

\begin{remark}[Gaussian assumption on $(\xi_t)_{t=1}^T$]
The assumption that $\xi_t \stackrel{\text{i.i.d}} {\sim}\calN(0,I_d)$ is mainly for convenience for the theoretical analysis in Section \ref{sec:analysis}, and also to be able to formulate the MLE as described already. This assumption was also made in \cite{KuniskyWeed} (for the i.i.d setting where $A^*= 0$), but was relaxed in \cite{schwengber2024geometricplantedmatchingsgaussian} to the more general class of sub-Gaussian distributions. We believe it should be similarly possible to extend our setup to the sub-Gaussian setting.
%
%
\end{remark}
%
%
\section{Analysis for the linear assignment estimator}\label{sec:analysis}
In this section, we present the key techniques used to prove Theorem \ref{thm:main_upper_bound}. Section \ref{sec:analysis_upper_bound} introduces the first moment method and the counting of augmenting paths, techniques commonly used in matching problems \cite{KuniskyWeed,schwengber2024geometricplantedmatchingsgaussian}. 
%
%
\subsection{Augmenting cycles in the analysis of 
\texorpdfstring{\eqref{eq:LA_pi}}{(LA-pi)}
}\label{sec:analysis_upper_bound}
%
The technique used in \cite{KuniskyWeed,schwengber2024geometricplantedmatchingsgaussian} to obtain upper bounds on the error of the LA estimator $\est{\Pi}$ consists of counting the number of augmenting cycles. A cycle $C_t$ of length $t\leq T$ is a sequence $C_t=(i_1,\ldots,i_t)$ consisting of distinct indices $i_1,\ldots,i_t\in [T]$. We say that $C_t$ is an augmenting $t$-cycle, if and only if (recall $W = (\hX)^\top X$) 
\begin{equation}\label{eq:augmenting}
    \sum^t_{k=1}W_{i_ki_{k+1}}\geq \sum^t_{k=1}W_{i_ki_k}\,,
\end{equation}
%
%
%
where $i_{t+1} := i_1$. To see the importance of the augmenting cycles for upper bounding\footnote{Recall the definition of $\calE$ as the set of mismatched indices in Theorem \ref{thm:main_upper_bound}, and note that $|\calE|$ corresponds to the Hamming distance between $\est{\pi}$ and $\pi^*$.} $|\calE|$, assume without loss of generality\footnote{This is common in the analysis of matching problems.} $\pi^*=\operatorname{id}$. It is easy to see that the elements of $\calE$ belong to a union of disjoint augmenting $t$-cycles of $\est{\pi}$, for different $t \in \set{2,\dots,T}$. 
%
This then implies  
\begin{equation*}
    |\calE|\leq \sum^T_{t=2}tN_t \quad \text{ where } \ N_t:=\sum_{(i_1,\ldots,i_t) \text{is $t$-cycle}}\indic_{\{(i_1,\ldots,i_t)\text{ is augmenting}\}}
\end{equation*}
represents the number of augmenting $t$-cycles. To guarantee perfect recovery, we will rely on the \emph{first moment method}, which involves bounding the expected value of the error. For that, we have 
\begin{align}
    \expec[|\calE|]\leq \sum^T_{t=2}t\expec[N_t]  \label{eq:first_moment}
    = \sum^T_{t=2}t\sum_{(i_1,\ldots, i_t) \text{is $t$-cycle}}\prob\big((i_1,\ldots,i_t)\text{ is augmenting}\big).
\end{align}
Therefore, a fundamental step is to bound the probability that a cycle $C_t$ is augmenting. Section \ref{subsec:analsysis_LA_2cycle} analyzes this step for the case $t=2$ to provide intuition for the general case in Section \ref{subsec:analsysis_LA_gen_cycle}.
%
%
\subsection{Warm-up: augmenting \texorpdfstring{$2$}{2}-cycles} \label{subsec:analsysis_LA_2cycle}
We begin by tackling the case of augmenting $2$-cycles, since it already contains the core of the argument. According to \eqref{eq:augmenting}, a $2$-cycle $C=(a\,, b)$, for a given pair $a,b\in[T]$, is augmenting if  
\begin{align}\label{eq:2cycle_1}
    W_{ab}+W_{ba}\geq W_{aa}+W_{bb},
\end{align}
or equivalently 
\begin{align}\label{eq:2cycle_2}
    (\hx_a)^\top x_b+(\hx_b)^\top x_a\geq (\hx_a)^\top + (\hx_b)^\top x_b.
\end{align}
Recall that
\begin{align*}
    \hx_a = A^*\hx_{a-1}+\xi_a+\sigma \txi_a \quad \text{ and } \quad x_a=A^*x_{a-1}+\xi_a,
\end{align*}
where we use the convention $\hx_0=x_0=0$. Hence, 
\begin{align*}
    (\hx_a)^\top x_b&=\left(A^*\hx_{a-1}+\xi_a+\sigma\txi_a\right)^\top (A^*x_{b-1}+\xi_b)\\
    (\hx_a)^\top x_a&=\left(A^*\hx_{a-1}+\xi_a+\sigma\txi_a\right)^\top (A^*x_{a-1}+\xi_a).
\end{align*}
It is easy to see that \eqref{eq:2cycle_2} it is equivalent to 
\begin{gather}\label{eq:2cycles_3}
\begin{aligned}
    \|\xi_a-\xi_b\|^2_2 &\leq \sigma \langle \txi_a-\txi_b,\xi_b-\xi_a\rangle+ \langle A^*\hx_{a-1}-A^*\hx_{b-1},\xi_b-\xi_a\rangle\\
                           & + \langle A^*\hx_{a-1}-A^*\hx_{b-1},A^*x_{b-1}-A^*_{a-1}\rangle\\
                           & +\langle \xi_b+\sigma\txi_b,A^*x_{a-1}-A^*x_{b-1}\rangle+\langle \xi_a+\sigma\txi_a,A^*x_{b-1}-A^*x_{a-1}\rangle.
\end{aligned}
\end{gather}
Notice that when $A^*=0$, we obtain $\|\xi_a-\xi_b\|^2_2\leq \sigma \langle \txi_a-\txi_b,\xi_b-\xi_a\rangle$, which appears in the upper bound argument in \cite{KuniskyWeed,schwengber2024geometricplantedmatchingsgaussian}. In comparison, here we have to deal with more complicated expressions.
Since $\hx_a=x_a+\sigma\tx_a$, we have (after some algebra) that \eqref{eq:2cycle_2} is equivalent to 
\begin{align}\label{eq:2cycle_4}
    \sigma \langle \underbrace{A^*(\tx_{a-1}-\tx_{b-1})+(\txi_a-\txi_b)}_{=:\ty_{ab}},\underbrace{A^*(x_{b-1}-x_{a-1})+\xi_b-\xi_a}_{=:y_{ab}}\rangle\geq \|A^*(x_{a-1}-x_{b-1})-(\xi_b-\xi_a)\|^2_2.
\end{align}
Assuming w.l.o.g that $a>b$, we have that conditioned on $\xi_1,\ldots, \xi_a$ (so that $y_{ab}$ is fixed) the left hand side of \eqref{eq:2cycle_4} is a Gaussian random variable. The following lemma more specifically characterizes this distribution. Its proof can be found in Appendix \ref{app:proof_Gaussian_2cycles}.
\begin{lemma}\label{lem:Gaussian_2cycles}
    Let $(x_t)_{t\in[a]}$,$(\tx_t)_{t\in[a]}$, $(\xi_t)_{t\in[a]}$ and $(\txi_t)_{t\in[a]}$ be as in \cvar. For $a>b$, define the variables
    \begin{align*}
        y_{ab} := A^*(x_{b-1}-x_{a-1})+(\xi_{b}-\xi_{a}),\\
        \ty_{ab} := A^*(\tx_{a-1}-\tx_{b-1})+(\txi_{a}-\txi_{b}).
    \end{align*}
    Then, conditional on $(\xi_i)^a_{i=1}$, we have
 $\sigma\langle \ty_{ab},y_{ab}\rangle 
  \sim \calN(0,\sigma^2\left(\tilde{\sigma}^2_1+\tilde{\sigma}^2_2)\right),$  
    where
    \begin{align*}\
        \tilde{\sigma}^2_1&=y^\top_{ab}\left(\sum^{b-1}_{i=0}(A^*)^i\left[(A^*)^{a-b}-I_d \right]\left[(A^*)^{a-b}-I_d \right]^\top({A^*}^\top)^i\right)y_{ab}\\
        \tilde{\sigma}^2_2&=y^\top_{ab}\left(\sum^{a-b}_{i=1}(A^*)^{a-b-i}({A^*}^\top)^{a-b-i}\right)y_{ab}.
    \end{align*}
    If $\normnew{A^*}_2 < 1$, we further have that 
    \begin{equation}\label{eq:sigma_bound}
        \sigma^2(\tilde{\sigma}^2_1+\tilde{\sigma}^2_2)\leq \frac{5\sigma^2\|y_{ab}\|^2_2}{1-\|A^*\|^2_2}.
    \end{equation}
\end{lemma}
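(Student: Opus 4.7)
}

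My plan is to condition on $(\xi_i)_{i=1}^a$, which freezes $y_{ab}$, and then express $\tilde{y}_{ab}$ as an explicit linear combination of the independent Gaussians $\tilde{\xi}_1,\dots,\tilde{\xi}_a$ via the VAR recursion. This will immediately give a centered scalar Gaussian for $\sigma\langle\tilde{y}_{ab},y_{ab}\rangle$, whose variance I can read off and then match to the claimed split $\sigma^2(\tilde{\sigma}_1^2+\tilde{\sigma}_2^2)$.

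Concretely, I would unroll the recursion for the second time series: since $\tilde{x}_1=\tilde{\xi}_1$ and $\tilde{x}_{t+1}=A^*\tilde{x}_t+\tilde{\xi}_{t+1}$, one has $\tilde{x}_t=\sum_{j=1}^{t}(A^*)^{t-j}\tilde{\xi}_j$. Substituting into $\tilde{y}_{ab}=A^*(\tilde{x}_{a-1}-\tilde{x}_{b-1})+(\tilde{\xi}_a-\tilde{\xi}_b)$ and collecting terms, I expect to obtain a decomposition $\tilde{y}_{ab}=\sum_{j=1}^a M_j\tilde{\xi}_j$ where
\begin{itemize}
\item for $1\le j\le b-1$, $M_j=(A^*)^{b-j}\bigl[(A^*)^{a-b}-I_d\bigr]$;
\item for $j=b$, $M_b=(A^*)^{a-b}-I_d$;
\item for $b+1\le j\le a-1$, $M_j=(A^*)^{a-j}$; and $M_a=I_d$.
\end{itemize}
Because $(\tilde{\xi}_j)$ is independent of $(\xi_i)$, conditionally on $(\xi_i)_{i=1}^a$ the variable $\sigma\langle\tilde{y}_{ab},y_{ab}\rangle=\sigma\sum_j\tilde{\xi}_j^\top M_j^\top y_{ab}$ is a sum of independent scalar Gaussians with total variance $\sigma^2\sum_{j=1}^a y_{ab}^\top M_j M_j^\top y_{ab}$. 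Re-indexing the $j\le b$ contributions by $i=b-j\in\{0,\ldots,b-1\}$ recovers $\tilde{\sigma}_1^2$, and the $j\ge b+1$ contributions, re-indexed by $i=j-b\in\{1,\ldots,a-b\}$, yield $\tilde{\sigma}_2^2$.

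For the final norm bound \eqref{eq:sigma_bound}, I would use $\|M_j y\|_2\le \|M_j\|_2\|y\|_2$ together with the submultiplicativity of the spectral norm and $\|(A^*)^{a-b}-I_d\|_2\le 1+\|A^*\|_2^{a-b}\le 2$. This gives $\tilde{\sigma}_1^2\le 4\|y_{ab}\|_2^2\sum_{i=0}^{b-1}\|A^*\|_2^{2i}\le 4\|y_{ab}\|_2^2/(1-\|A^*\|_2^2)$ and $\tilde{\sigma}_2^2\le \|y_{ab}\|_2^2\sum_{i=1}^{a-b}\|A^*\|_2^{2(a-b-i)}\le \|y_{ab}\|_2^2/(1-\|A^*\|_2^2)$; summing yields the factor $5$ and multiplying by $\sigma^2$ closes the argument.

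The only step requiring any care is the bookkeeping in decomposing $\tilde{y}_{ab}$: splitting the sums at $j=b-1$, $j=b$ and $j=a$ correctly so that the telescoping across the two geometric sums produces exactly the factor $(A^*)^{a-b}-I_d$ in the first block. Everything else is routine Gaussian algebra and geometric-series estimates, and no assumption beyond $\|A^*\|_2<1$ (used only for the final bound) is needed.
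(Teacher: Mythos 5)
Your proposal is correct and follows essentially the same route as the paper: unroll $\tilde{x}_t=\sum_{j\le t}(A^*)^{t-j}\tilde{\xi}_j$, collect the resulting coefficients of $\tilde{\xi}_j$ into two blocks (indices $j\le b$ giving $(A^*)^{b-j}[(A^*)^{a-b}-I_d]$, indices $j>b$ giving $(A^*)^{a-j}$), read off the conditional variance as $\sum_j y_{ab}^\top M_jM_j^\top y_{ab}$, and bound each block by a geometric series together with $\|(A^*)^{a-b}-I_d\|_2\le 2$. Your re-indexing and the $4+1=5$ arithmetic match the paper exactly.
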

%
From \eqref{eq:2cycle_4} and Lemma \ref{lem:Gaussian_2cycles} we have that 
\begin{align*}\label{eq:cycle}
    &\prob\left(C=(a,b)\text{ is augmenting}\right) \\    &=\expec_{\xi_1,\ldots,\xi_a}\left[\prob\left(\sigma\langle \ty_{ab},y_{ab}\rangle\geq \|y_{ab}\|^2_2\,\Big|(\xi_i)^a_{i=1}\right)\right]\\
    &=\expec_{\xi_1,\ldots,\xi_a}\left[\prob\left(\underbrace{\frac{\langle \ty_{ab},y_{ab}\rangle}{\sqrt{\tilde{\sigma}^2_1+\tilde{\sigma}^2_2}}}_{=:g}\geq \frac{\|y_{ab}\|^2_2}{\sigma\sqrt{\tilde{\sigma}^2_1+\tilde{\sigma}^2_2}}\,\Bigg|(\xi_i)^a_{i=1}\right)\right]\\
    &\leq \expec_{\xi_1,\ldots,\xi_a}\left[\prob\left(g\geq \sqrt{\frac{1-\|A^*\|^2_2}{5\sigma^2}}\|y_{ab}\|^2_2\,\Bigg|(\xi_i)^a_{i=1}\right)\right] \tag{using \eqref{eq:sigma_bound}}\\
    &\leq \expec_{\xi_1,\ldots,\xi_a}\left[\exp\left(-\frac{(1-\|A^*\|^2_2)}{10\sigma^2}\|y_{ab}\|^2_2\right)\right] 
    \tag{conditional on $(\xi_i)^a_{i=1}$,  $g \sim \calN(0,1)$ \text{ by Lemma }\ref{lem:Gaussian_2cycles}}.
\end{align*}
Denoting $\xi_{1:a}:=(\xi_1^\top, \xi_2^\top,\dots, \xi_a^\top)^\top$, we have that 
$\|y_{ab}\|^2_2=\xi_{1:a}^\top BB^\top \xi_{1:a}$
where 
\begin{equation*}
    B:=\begin{bmatrix}
        \left((A^*)^{a-b}-I_d\right)^\top({A^*}^\top)^{b-1}\\
        \vdots\\
        \left((A^*)^{a-b}-I_d\right)^\top({A^*}^\top)\\
        \left((A^*)^{a-b}-I_d\right)^\top\\
        (A^*)^{a-b-1}\\
        \vdots\\
        {A^*}^\top\\
        I_d
    \end{bmatrix}.
\end{equation*}
Thus, 
\begin{align*}
    \expec_{\xi_{1:a}}\left[\exp\left(-\frac{(1-\|A^*\|^2_2)}{10\sigma^2}\xi_{1:a}^\top BB^\top \xi_{1:a}\right)\right]
    &=\frac{1}{(2\pi)^{\frac{ad}2}}\int_{\matR^{ad}}\exp\Big(-\frac{\xi^\top \xi}{2} \Big)\exp\left(-\underbrace{\frac{(1-\|A^*\|^2_2)}{10\sigma^2}}_{=:\alpha}\xi^\top BB^\top \xi\right)d\xi\\
    &=\frac1{(2\pi)^{\frac{ad}2}}\int_{\matR^{ad}}\exp\left(-\frac{\xi^\top}2(2\alpha BB^\top+I_{ad})\xi\right)d\xi\\
    &=\operatorname{det}(2\alpha BB^\top+I_{ad})^{-\frac12}
\end{align*}
and we obtain the bound 
\begin{equation} \label{eq:2cycle_fin_bound}
    \prob\left(C=(a,b)\text{ is augmenting}\right)\leq \operatorname{det}\left( \frac{(1-\|A^*\|^2_2)}{5\sigma^2}BB^\top+I_{ad}\right)^{-\frac12}.
\end{equation}
\subsection{General augmenting \texorpdfstring{$t$}{t}-cycles} \label{subsec:analsysis_LA_gen_cycle} 
The analysis of the general case $t\geq 2$ is similar  to that of $t=2$, but involves cumbersome calculations. The following proposition, whose proof can be found in Section  \ref{subsec:proof_Prop_tcycles} summarizes our findings in this case. For $t = 2$, note that the bound stated in Proposition \ref{prop:tcycles} is not necessarily always worse than that in \eqref{eq:2cycle_fin_bound}. 
%
\begin{proposition}\label{prop:tcycles}
    For $t\geq 2$, let $C_t=(i_1,\ldots, i_t)$ be a $t$-cycle with $i_1$ the largest amongst the $i_k$'s (w.lo.g). Then if $\normnew{A^*}_2 < 1$, it holds that
    \begin{equation}\label{eq:augmenting_proba_bound}
        \prob(C_t\text{ is augmenting})\leq \operatorname{det}\left(\frac{(1-\|A^*\|_2)^3}{4\sigma^2}L+I_{i_1d}\right)^{-\frac12},
    \end{equation}
    where 
    \begin{equation*}
        L:=\sum^t_{k=1}B(\alpha_k,\beta_k) B^\top(\alpha_k,\beta_k),
    \end{equation*}
    with $B^\top(\alpha_k,\beta_k) \in \matR^{d \times (i_1 d)}$ defined as the matrix 
    \begin{equation*}
        \begin{bmatrix}
            (A^*)^{\alpha_k-1}-(A^*)^{\beta_k-1}& \ldots& (A^*)^{\alpha_k-\beta_k}-I_d&(A^*)^{\alpha_k-\beta_k-1}&\ldots &A^*\, &I_d & 0&\ldots& 0
        \end{bmatrix}.
    \end{equation*}
    Moreover,  $\alpha_t :=i_1, \, \beta_t:=i_t$, and 
    \begin{align*}
        \alpha_k &:=\max\{i_k,i_{k+1}\},\, \beta_k:=\min\{i_k,i_{k+1}\}, \text{ for } 1\leq k \leq t-1. 
    \end{align*}
\end{proposition}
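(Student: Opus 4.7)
The plan is to mirror the $2$-cycle warm-up from Section~\ref{subsec:analsysis_LA_2cycle} while accounting for the cyclic structure of $t$-cycles. The four moves I will carry out are: (i) rewrite the augmenting inequality so that a Gaussian functional of $\txi_{1:i_1}$ sits on the left and a $\xi$-measurable nonnegative quadratic form sits on the right; (ii) unroll the VAR recursion to express that quadratic form as $\xi_{1:i_1}^\top L \xi_{1:i_1}$ with $L$ exactly as in the statement; (iii) condition on $\xi_{1:i_1}$ and apply a one-dimensional Gaussian tail bound; and (iv) take expectation over $\xi_{1:i_1}$ using the Gaussian determinant identity $\expec[\exp(-\alpha\,\xi^\top M\xi)]=\det(I+2\alpha M)^{-1/2}$.

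For step~(i), substituting $\hx_{i_k}=x_{i_k}+\sigma\tx_{i_k}$ into $\sum_k W_{i_ki_{k+1}}\ge \sum_k W_{i_ki_k}$ and applying the elementary identity $2u^\top v=\|u\|^2+\|v\|^2-\|u-v\|^2$ to the cyclic double sum $\sum_k x_{i_k}^\top x_{i_{k+1}}-\sum_k\|x_{i_k}\|^2$ reduces the augmenting condition to
\[
\sigma\sum_{k=1}^{t} \tx_{i_k}^\top (x_{i_{k+1}}-x_{i_k}) \;\geq\; \tfrac12\sum_{k=1}^{t}\|x_{i_k}-x_{i_{k+1}}\|_2^2.
\]
For step~(ii), unrolling $x_t=\sum_{j\le t}(A^*)^{t-j}\xi_j$ yields $x_{\alpha_k}-x_{\beta_k}=B^\top(\alpha_k,\beta_k)\,\xi_{1:i_1}$ with precisely the block structure prescribed in the proposition: differences $(A^*)^{\alpha_k-j}-(A^*)^{\beta_k-j}$ for $j=1,\dots,\beta_k$, pure powers $(A^*)^{\alpha_k-j}$ for $j=\beta_k+1,\dots,\alpha_k$, and zeros thereafter. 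Summing over $k$ identifies the right-hand side above as $\tfrac12\,\xi_{1:i_1}^\top L\,\xi_{1:i_1}$.

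For step~(iii), expanding $\tx_{i_k}=\sum_{j\le i_k}(A^*)^{i_k-j}\txi_j$ and exchanging sums shows that, conditional on $\xi_{1:i_1}$, the left-hand side is a centered Gaussian with variance $\sigma^2\sum_{j=1}^{i_1}\|w_j\|_2^2$, where $w_j:=\sum_{k:\,i_k\ge j}((A^*)^\top)^{i_k-j}(x_{i_{k+1}}-x_{i_k})$. A Cauchy--Schwarz estimate paired with the geometric bound $\sum_{k:\,i_k\ge j}\|A^*\|_2^{i_k-j}\le (1-\|A^*\|_2)^{-1}$ (which leverages the distinctness of the $i_k$'s) controls this variance by a constant multiple of $(1-\|A^*\|_2)^{-2}\cdot \xi_{1:i_1}^\top L\,\xi_{1:i_1}$. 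The one-sided Gaussian tail $\prob(|Z|\ge R)\le \exp(-R^2/(2\mathrm{Var}\,Z))$ then yields a conditional bound of the form $\exp(-c(1-\|A^*\|_2)^3 \,\xi_{1:i_1}^\top L\,\xi_{1:i_1}/\sigma^2)$ after one further geometric-series estimate to absorb constants, and step~(iv) integrates $\xi_{1:i_1}\sim\calN(0,I_{i_1 d})$ to produce the determinantal bound \eqref{eq:augmenting_proba_bound}.

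The principal obstacle is the conditional variance computation in step~(iii). Unlike the $2$-cycle case, where Lemma~\ref{lem:Gaussian_2cycles} admits an explicit block decomposition, the general setting couples the differences $v_k:=x_{i_{k+1}}-x_{i_k}$ across $k$ through shared coordinates of $\xi_{1:i_1}$ and weights them by powers $((A^*)^\top)^{i_k-j}$ with interacting exponents. Keeping the variance controlled by the single quadratic form $\xi_{1:i_1}^\top L\,\xi_{1:i_1}$ (rather than losing a factor of $t$ to a naive triangle-inequality bound) requires a careful pairing of the $j$--sum with the $k$--sum via Cauchy--Schwarz, exploiting that the $i_k$'s are distinct. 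This is precisely where the extra factor of $(1-\|A^*\|_2)$ relative to the $2$-cycle bound arises and determines the cubic exponent in \eqref{eq:augmenting_proba_bound}.
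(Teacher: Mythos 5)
Your proposal is correct and, structurally, follows the paper's route: reduce the augmenting event to a scalar inequality, condition on $\xi_{1:i_1}$ so that the left side is a centered Gaussian, bound its variance, apply a one-sided Gaussian tail, and integrate over $\xi_{1:i_1}$ with the Gaussian--determinant identity. Your step (i) is exactly the paper's reformulation in disguise: the paper's $y_k = A^*(x_{i_{k+1}-1}-x_{i_k-1}) + (\xi_{i_{k+1}}-\xi_{i_k})$ equals $x_{i_{k+1}}-x_{i_k}$ by the VAR recursion, and the paper's LHS $\langle A^*\tx_{i_k-1}+\txi_{i_k}, y_k\rangle$ is $\tx_{i_k}^\top(x_{i_{k+1}}-x_{i_k})$; your telescoping via $2u^\top v=\|u\|^2+\|v\|^2-\|u-v\|^2$ is a cleaner way to arrive there. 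The genuine divergence is how the conditional variance $\sigma^2\sum_{j=1}^{i_1}\|w_j\|^2$ is controlled by $\sum_k\|v_k\|^2=\xi_{1:i_1}^\top L\,\xi_{1:i_1}$. The paper assembles the $w_j$'s into a block lower-triangular Toeplitz matrix $\Gamma_3$ and bounds $\|\Gamma_3\|_2\le (1-\|A^*\|_2)^{-1}$ via a symbol-function estimate, then multiplies by one more geometric factor to get $(1-\|A^*\|_2)^{-3}$. Your weighted Cauchy--Schwarz, splitting $\|A^*\|_2^{i_k-j}\|v_k\| = \|A^*\|_2^{(i_k-j)/2}\cdot \|A^*\|_2^{(i_k-j)/2}\|v_k\|$ and using distinctness of the $i_k$'s so that $\sum_{k:i_k\ge j}\|A^*\|_2^{i_k-j}\le (1-\|A^*\|_2)^{-1}$, then exchanging the $j$- and $k$-sums, yields $\sum_j\|w_j\|^2\le (1-\|A^*\|_2)^{-2}\sum_k\|v_k\|^2$. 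This is \emph{sharper} than the paper's bound; carried through the tail and the expectation it gives $\det\big(\frac{(1-\|A^*\|_2)^2}{4\sigma^2}L + I_{i_1 d}\big)^{-1/2}$, which dominates (is smaller than) the right-hand side of \eqref{eq:augmenting_proba_bound}, so the proposition follows a fortiori. One small misstatement: you claim the conditional bound has exponent $(1-\|A^*\|_2)^3$, but your own steps give $(1-\|A^*\|_2)^2$; the ``further geometric-series estimate to absorb constants'' you gesture at is unnecessary and would only weaken what you already have. If you propagate your $(1-\|A^*\|_2)^2$ through the argument of Proposition~\ref{prop:det_bound}, the factor $(1-\|A^*\|_2)^5$ in Theorem~\ref{thm:main_upper_bound} improves to $(1-\|A^*\|_2)^4$.
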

Equipped with Proposition \ref{prop:tcycles} and equation \eqref{eq:first_moment}, it remains only to bound the right-hand side of \eqref{eq:augmenting_proba_bound} in order to obtain an upper bound on the expected error $\expec [|\calE|]$. The next proposition 
establishes the required estimate. 
%
%
\begin{proposition}\label{prop:det_bound}
Under the assumptions and notation of Proposition~\ref{prop:tcycles}, 
we have that  
\begin{equation}\label{eq:prop_cycles_result}
\operatorname{det}\left(\frac{(1-\|A^*\|_2)^3}{4\sigma^2} L + I_{i_1 d}\right)^{-\frac{1}{2}} \leq \left(\frac{(1-\|A^*\|_2)^5}{4\sigma^2}+1\right)^{-(t-1)\frac{d}{2}}.
\end{equation}
\end{proposition}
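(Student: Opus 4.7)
\textbf{The plan} is to prove the claim through a factorization of $L$ that reduces it to a determinantal inequality for the Laplacian of the cycle $(i_1,\ldots,i_t)$. Specifically, I would first establish the identity $L = P^\top \widetilde L\, P$, where $P \in \matR^{i_1 d \times i_1 d}$ is the invertible lower-triangular block Toeplitz matrix with $(i,j)$-block $(A^*)^{i-j}$ for $i \geq j$ (equivalently $P = (I - N \otimes A^*)^{-1}$ with $N$ the $i_1\times i_1$ sub-diagonal nilpotent shift on $\matR^{i_1}$), and
\[
\widetilde L := \sum_{k=1}^t (E_{\alpha_k} - E_{\beta_k})(E_{\alpha_k} - E_{\beta_k})^\top,
\]
with $E_s \in \matR^{i_1 d \times d}$ the $s$-th block embedding, is the Laplacian of the $t$-cycle $C_t$ on vertex set $\{i_1,\ldots,i_t\}$ tensored with $I_d$, viewed as an operator on $\matR^{i_1 d}$. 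This identity follows by noting that the $s$-th block row of $P$ coincides with $G_s$, so $G_s = E_s^\top P$, whence $B^\top(\alpha_k,\beta_k) = G_{\alpha_k} - G_{\beta_k} = (E_{\alpha_k}-E_{\beta_k})^\top P$, and summing over $k$ gives $L = P^\top \widetilde L\, P$.

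\textbf{Next}, I would compare eigenvalues of $L$ and $\widetilde L$ via this factorization. Since $\|N\|_2=1$, one has $\|P^{-1}\|_2 = \|I - N \otimes A^*\|_2 \leq 1 + \|A^*\|_2$, giving $\lambda_{\min}(PP^\top) \geq (1+\|A^*\|_2)^{-2} \geq (1-\|A^*\|_2)^2$ (the last step because $(1-\|A^*\|_2^2)^2 \leq 1$). Since $\widetilde L \succeq 0$, the eigenvalues of $L = P^\top \widetilde L P$ coincide with those of $\widetilde L^{1/2} PP^\top \widetilde L^{1/2}$, and combining with the operator inequality $\widetilde L^{1/2} PP^\top \widetilde L^{1/2} \succeq \lambda_{\min}(PP^\top)\, \widetilde L$ yields $\lambda_i(L) \geq (1-\|A^*\|_2)^2\, \lambda_i(\widetilde L)$ for every $i$. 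Passing to determinants,
\[
\det\!\Bigl(\tfrac{(1-\|A^*\|_2)^3}{4\sigma^2} L + I\Bigr) \;\geq\; \det\!\bigl(b \widetilde L + I\bigr), \qquad b := \tfrac{(1-\|A^*\|_2)^5}{4\sigma^2}.
\]

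\textbf{To finish}, I would compute $\det(b\widetilde L + I)$ using the standard spectrum of the cyclic Laplacian: the non-zero eigenvalues of $\widetilde L$ are $\mu_j = 2 - 2\cos(2\pi j/t)$ for $j = 1,\ldots,t-1$, each with multiplicity $d$. By Kirchhoff's matrix-tree theorem, $C_t$ has $t$ spanning trees, so $\prod_{j=1}^{t-1}\mu_j = t^2 \geq 1$. Maclaurin's inequality applied to the positive reals $\mu_1,\ldots,\mu_{t-1}$ then gives $e_k(\mu)/\binom{t-1}{k} \geq \bigl(\prod_j \mu_j\bigr)^{k/(t-1)} \geq 1$ for every $k$, whence
\[
\prod_{j=1}^{t-1}(1 + b\mu_j) \;=\; \sum_{k=0}^{t-1} b^k e_k(\mu) \;\geq\; \sum_{k=0}^{t-1} b^k \binom{t-1}{k} \;=\; (1+b)^{t-1}.
\]
Raising to the $d$-th power and chaining with the previous display delivers the claim in the form $\det(\tfrac{(1-\|A^*\|_2)^3}{4\sigma^2} L + I) \geq (1+b)^{(t-1)d}$, which is the reciprocal of the desired bound.

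\textbf{The main obstacle} is the factorization $L = P^\top \widetilde L P$: it requires carefully aligning the block structure of $B^\top(\alpha,\beta)$ (differences of powers in the first $\beta$ blocks, plain powers in blocks $\beta{+}1,\ldots,\alpha$, zeros afterwards) with the block rows of $P$. Once this identity is in place, the remaining steps (the spectral bound $\lambda_{\min}(PP^\top) \geq (1-\|A^*\|_2)^2$ via the Neumann-series form of $P$, and the Maclaurin-based determinant inequality for the cyclic Laplacian) proceed cleanly; the factor $(1-\|A^*\|_2)^2$ appearing in the claim arises precisely as the PSD lower bound for $PP^\top$.
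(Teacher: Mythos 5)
Your proof is correct, and it reaches the stated bound by a genuinely different route than the paper. The paper writes $L = P(L_{C_t}\otimes I_d)P^\top$ with $P = [P_{i_1}\ \cdots\ P_{i_t}] \in \matR^{(i_1 d)\times(td)}$ rectangular, and then controls $\det(\gamma L + I)$ through the \emph{pseudo-determinant} $\det^*(L)$: this requires the super-additivity of geometric means (Lemma~\ref{lem:superadd_geo_means}), a pseudo-determinant factorization identity (Lemma~\ref{lem:pseudodet_gen_PLPt}), an Ostrowski-type bound relating $\det\left((PU)^\top PU\right)$ to $\det(P^\top P)$ and $\lambda_1(P^\top P)$ (Lemma~\ref{lem:bound_PUtPU}), an explicit block-triangular factorization of $\det(P^\top P)$ (Lemma~\ref{lem:det_PtP}), and a block-Gershgorin estimate for $\lambda_1(P^\top P)$ (Lemma~\ref{lem:gerschgorin_PtP}). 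Your alternative factorization $L = \mathcal{P}^\top\widetilde L\,\mathcal{P}$, with $\mathcal{P} = (I - N\otimes A^*)^{-1}$ square and invertible, collapses all of this to a single spectral-norm estimate $\|\mathcal{P}^{-1}\|_2 \le 1+\|A^*\|_2$, which via the Ostrowski-type similarity $\lambda_i(L)=\lambda_i\bigl(\widetilde L^{1/2}\mathcal{P}\mathcal{P}^\top\widetilde L^{1/2}\bigr)\ge \lambda_{\min}(\mathcal{P}\mathcal{P}^\top)\lambda_i(\widetilde L)$ gives the eigenvalue-by-eigenvalue bound $\lambda_i(L)\ge(1-\|A^*\|_2)^2\lambda_i(\widetilde L)$, whence the determinant monotonically transfers to the cycle Laplacian; the factor $(1-\|A^*\|_2)^2$ that the paper extracts from $\det(P^\top P)\ge 1$ and $\lambda_1(P^\top P)\le (1-\|A^*\|_2)^{-2}$ arises in your route directly as the lower bound on $\sigma_{\min}(\mathcal{P})^2$. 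Your closing step via Maclaurin's inequality on the elementary symmetric polynomials, $\prod_j(1+b\mu_j)=\sum_k b^k e_k(\mu)\ge\sum_k b^k\binom{t-1}{k}=(1+b)^{t-1}$, is a mild sharpening of the paper's geometric-means argument and delivers the identical exponent. In short, your approach trades the paper's explicit formula for $\det(P^\top P)$ (which has some independent interest) for a considerably shorter path to the same inequality. Two small writing cautions: the symbol $P$ already denotes the rectangular matrix $[P_{i_1}\ \cdots\ P_{i_t}]$ in the paper, so your Toeplitz operator should be renamed; and when verifying the block identity $B^\top(\alpha_k,\beta_k)=(E_{\alpha_k}-E_{\beta_k})^\top\mathcal{P}$, it is worth making explicit that the $s$-th block row of $\mathcal{P}$ has $j$-th block $(A^*)^{s-j}$ (no transposes), which is what matches the paper's definition of $B^\top$.
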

%
%
The proof of Proposition \ref{prop:det_bound} is outlined in Section \ref{subsec:proof_prop_det_bound}. 
Theorem \ref{thm:main_upper_bound} is then proved by combining Propositions \ref{prop:tcycles} and \ref{prop:det_bound} with \eqref{eq:first_moment}. The reader is referred to Appendix \ref{app:proof_main_upper_bound} for a complete derivation.

\begin{remark}[Comparison with the analysis in \cite{KuniskyWeed}] \label{rem:comp_kunisky}  
Although our analysis for the LA estimator (in the VAR setting) is inspired by \cite{KuniskyWeed}, it is useful to elaborate on the technical challenges arising here as compared to \cite{KuniskyWeed}. 
\begin{enumerate}
    \item The proof of Proposition \ref{prop:tcycles}  follows, in spirit, along the same lines as that of \cite[Prop. 3.1]{KuniskyWeed}. However the underlying calculations in our case are considerably more involved and tedious, due to the VAR structure on the model.

    \item  Secondly, Proposition \ref{prop:det_bound} has a considerably easy analogue in \cite{KuniskyWeed} -- indeed, if $A^* = 0$, then the spectrum of $L$ is known in closed form and bounding the $\det$ term is straightforward. In our setting, however, the spectrum of $L$ is not known in closed form, and therefore, bounding the $\det$ term requires additional technical details. In particular, our argument relies only on information about the pseudo determinant of $L$.

    \item  Finally, once Propositions \ref{prop:det_bound} and \ref{prop:tcycles} are established, the remaining calculations needed to obtain Theorem \ref{thm:main_upper_bound} are essentially along the same lines as in \cite{KuniskyWeed}.
\end{enumerate}
%
%
%
%
%
\end{remark}
\subsection{Proof of Proposition \ref{prop:tcycles}} \label{subsec:proof_Prop_tcycles}
To ease notation, we sometimes use $A$ instead of $A^*$ in this proof; especially during a long sequence of calculations. Suppose $t\geq 2$ and $C_t=(i_1,i_2,\ldots, i_t)$, where we assume w.l.o.g that $i_1$ is the largest among the indices in $C_t$. In this case, $C_t$ is augmenting, if and only if
\begin{equation}\label{eq:augmenting_t_cycle}
    {\hx_{i_t}}^\top x_{i_1}+\sum^{t-1}_{k=1}{\hx_{i_k}}^\top x_{i_{k+1}}\geq \sum^t_{k=1}{\hx_{i_k}}^\top x_{i_k}.
\end{equation}
Recalling that 
\begin{align*}
    \begin{cases}
        \hx_{i_k}=A\hx_{i_{k}-1}+\xi_{i_k}+\sigma \txi_{i_k}\\
        x_{i_k}=Ax_{i_{k}-1}+\xi_{i_k},
    \end{cases}
\end{align*}
we have 
\begin{align*}
    {\hx_{i_k}}^\top x_{i_{k+1}}&=\langle A\hx_{i_{k-1}},Ax_{i_{k+1}-1}\rangle +\langle A\hx_{i_{k-1}},\xi_{i_{k+1}}\rangle+\langle \xi_{i_k}+\sigma \tilde{\xi}_{i_k},Ax_{i_{k+1}-1}\rangle+\langle \xi_{i_k}+\sigma \tilde{\xi}_{i_k},\xi_{i_{k+1}}\rangle.
\end{align*}
Then \eqref{eq:augmenting_t_cycle} is equivalent to 
\begin{align*}
    &\langle A\hx_{i_{t-1}},Ax_{i_1-1}\rangle +\langle A\hx_{i_t-1},\xi_{i_1}\rangle +\langle \xi_{i_t}+\sigma \tilde{\xi}_{i_t},Ax_{i_1-1}\rangle+ \langle \xi_{i_t}+\sigma \tilde{\xi}_{i_t},\xi_{i_1}\rangle \\
    & +\sum^{t-1}_{k=1}\langle A\hx_{i_{k-1}},Ax_{i_{k+1}-1}\rangle+\sum^{t-1}_{k=1} \langle A\hx_{i_k-1},\xi_{i_{k+1}}\rangle+\sum^{t-1}_{k=1}\langle \xi_{i_k}+\sigma \tilde{\xi}_{i_k},Ax_{i_{k+1}-1}\rangle+\sum^{t-1}_{k=1} \langle \xi_{i_k}+\sigma \tilde{\xi}_{i_k},\xi_{i_{k+1}}\rangle\\
    &\geq \langle A\hx_{i_{t}-1},Ax_{i_t-1}\rangle +\langle A\hx_{i_t-1},\xi_{i_t}\rangle +\langle \xi_{i_t}+\sigma \tilde{\xi}_{i_t},Ax_{i_t-1}\rangle+ \langle \xi_{i_t}+\sigma \tilde{\xi}_{i_t},\xi_{i_t}\rangle \\
    & +\sum^{t-1}_{k=1}\langle A\hx_{i_{k}-1},Ax_{i_k-1}\rangle+\sum^{t-1}_{k=1} \langle A\hx_{i_k-1},\xi_{i_{k}}\rangle+\sum^{t-1}_{k=1}\langle \xi_{i_k}+\sigma \tilde{\xi}_{i_k},Ax_{i_{k}-1}\rangle+\sum^{t-1}_{k=1} \langle \xi_{i_k}+\sigma \tilde{\xi}_{i_k},\xi_{i_{k}}\rangle.
\end{align*}
The previous inequality is equivalent to 
\begin{align*}
    &\langle A\hx_{i_t-1},A(x_{i_1-1}-x_{i_t-1})\rangle 
    + \sum^{t-1}_{k=1}\langle A\hx_{i_k-1},A(x_{i_{k+1}-1}-x_{i_k-1})\rangle 
    + \langle Ax_{i_t-1},\xi_{i_1}-\xi_{i_t}\rangle +\sum^{t-1}_{k=1}\langle A\hx_{i_k-1},\xi_{i_{k+1}}-\xi_{i_k}\rangle \\
    &+ \langle \xi_{i_t}+\sigma \txi_{i_t},A(x_{i_1-1}-x_{i_t-1})\rangle 
    + \sigma \langle \txi_{i_t},\xi_{i_1}-\xi_{i_t}\rangle 
    + \sigma\sum^{t-1}_{k=1}\langle \txi_{i_k},\xi_{i_{k+1}} -\xi_{i_k}\rangle 
    + \sum^{t-1}_{k=1} \dotprod{\xi_{i_k} + \sigma \txi_{i_k}}{A^*(x_{i_{k+1}-1} - x_{i_k-1})}
    \\
    &\geq \frac12\left(\|\xi_{i_t}-\xi_{i_1}\|^2_2+\sum^{t-1}_{k=1}\|\xi_{i_k}-\xi_{i_{k+1}}\|^2_2\right).
\end{align*}
Writing $\hx_k=x_k+\sigma\tx_k$, for $k\in[t]$, we obtain after some re-shuffling and simple algebra, that \eqref{eq:augmenting_t_cycle} is equivalent to
\begin{align*}
    &\sigma \langle A\tx_{i_t-1}+\txi_{i_t},A(x_{i_1-1}-x_{i_t-1})+\xi_{i_1}-\xi_{i_t}\rangle+\sigma \sum^{t-1}_{k=1}\langle A\tx_{i_k-1}+\txi_{i_k},A(x_{i_{k+1}-1}-x_{i_k-1})+\xi_{i_{k+1}}-\xi_{i_k}\rangle\\
    &\geq \frac12\left( \|A(x_{i_t-1}-x_{i_{1}-1})-(\xi_{i_{1}}-\xi_{i_t})\|^2_2+\sum^{t-1}_{k=1}\|A(x_{i_k-1}-x_{i_{k+1}-1})-(\xi_{i_{k+1}}-\xi_{i_k})\|^2_2 \right).
\end{align*}
Now, denoting
\begin{align*}
    y_{t}&:=A(x_{i_t-1}-x_{i_{1}-1})-(\xi_{i_{1}}-\xi_{i_t}),\\
    y_{k}&:=A(x_{i_{k+1}-1}-x_{i_{k}-1})-(\xi_{i_{k+1}}-\xi_{i_k}), \, \text{ for }k=1,\ldots, t-1,
\end{align*}
we have that \eqref{eq:augmenting_t_cycle} is equivalent to 
\begin{align} \label{eq:temp1_1_equiv}
    \sigma \sum^{t-1}_{k=1}\langle A\tx_{i_k-1}+\txi_{i_k},y_k\rangle \geq \frac12\left(\|y_t\|^2_2+\sum^{t-1}_{k=1}\|y_k\|^2_2\right).
\end{align}
Note that, conditioned on $\xi_1,\ldots,\xi_{i_1}$ the LHS above is a zero mean real Gaussian. To find its variance, we define $\pi:\{2,\ldots,t\}\rightarrow \{2,\ldots,t\}$ be the permutation that defines the ordering $i_1>i_{\pi(2)}>i_{\pi(3)}>\ldots>i_{\pi(t)}$, i.e., $i_{\pi(2)}$ and $i_{\pi(t)}$ are, respectively, the largest and the smallest indices among $i_2,i_3,\ldots,i_t$. With this, we have 
\begin{align*}
    g
    := \sigma \sum^t_{k=1}\langle A\tx_{i_k-1}+\txi_k,y_k\rangle&=\sigma \sum^t_{k=1}\sum^{i_k}_{j=1}y^\top_k A^{i_k-j}\txi_j\\
    &=\sigma \sum^t_{k=2}\sum^{i_{\pi(k)}}_{j=1}y^\top_{\pi(k)}A^{i_{\pi(k)}-j}\txi_j+\sigma \sum^{i_1}_{j=1}y^\top_1 A^{i_1-j}\txi_j \\
    &=\sigma\Bigg[\sum^{i_{\pi(t)}}_{j=1} \left(y^\top_1A^{i_1-j}+\sum^{t-2}_{k=0}y^\top_{\pi(t-k)}A^{i_{\pi(t-k)}-j}\right)\txi_j\\
    &+ \sum_{l=0}^{t-3} \left(\sum^{i_{\pi(t-l-1)}}_{j=i_{\pi(t-l)}+1} \left( y^\top_1 A^{i_1-j}+\sum_{k=l+1}^{t-2} y^\top_{\pi(t-k)}A^{i_{\pi(t-k)}-j} \right) \txi_j \right) \\
    &+\sum^{i_1-i_{\pi(2)}}_{k=1}y^\top_1A^{i_1-i_{\pi(2)}-k}\txi_{i_{\pi(2)+k}}\Bigg].
\end{align*}
Hence, $g\sim \calN(0,\sigma^2\bar{\sigma}^2)$, where 
\begin{align*}
    \bar{\sigma}^2& = \underbrace{\sum^{i_{\pi(t)}}_{j=1}\left\|y^\top_1A^{i_1-j}+\sum^{t-2}_{k=0}y^\top_{\pi(t-k)}A^{i_{\pi(t-k)}-j}\right\|^2_2}_{=:\bar{\sigma}^2_1}\\
    &+ \underbrace{\sum_{l=0}^{t-3} \sum^{i_{\pi(t-l-1)}}_{j=i_{\pi(t-l)}+1}\left\|y^\top_1A^{i_1-j}+\sum_{k=l+1}^{t-2} y^\top_{\pi(t-k)}A^{i_{\pi(t-k)}-j}\right\|^2_2}_{=:\bar{\sigma}^2_2}\\
    &+ \underbrace{\sum^{i_1-i_{\pi(2)}}_{k=1}\left\|y^\top_1A^{i_1-i_{\pi(2)}-k}\right\|^2_2.}_{=:\bar{\sigma}^2_3}
\end{align*}
We will now bound $\bar{\sigma}^2$, with the help of the following result. From now onwards, we will consider $\pi(1)=1$. 
%
\begin{lemma}
For $\bar{\sigma}$ as defined above, it holds that 
    \begin{equation*}
        \bar{\sigma}^2\leq \left(\frac1{1-\|A^*\|_2}\right)^3\sum^t_{k=1}\|y_{k}\|^2_2.
    \end{equation*}
\end{lemma}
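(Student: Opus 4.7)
The plan is to rewrite $\bar{\sigma}^2 = \bar{\sigma}^2_1 + \bar{\sigma}^2_2 + \bar{\sigma}^2_3$ as a single sum indexed by $j \in \{1,\ldots,i_1\}$, and then bound the summands using sub-multiplicativity of the spectral norm together with Cauchy--Schwarz and a geometric-series estimate. Setting $\pi(1):=1$ for convenience, I would first verify that for every $j\in\{1,\ldots,i_1\}$ the row vector inside the corresponding squared norm takes the unified form
\[
    \sum_{k\in K_j} y_k^\top (A^*)^{i_k-j}, \qquad K_j := \{k\in[t]: i_k\ge j\}.
\]
Concretely, for $1\le j\le i_{\pi(t)}$ one has $K_j=[t]$ (the range of $\bar{\sigma}^2_1$); for $i_{\pi(t-l)}<j\le i_{\pi(t-l-1)}$, i.e.\ the $l$-th block of $\bar{\sigma}^2_2$, one has $K_j=\{1,\pi(2),\ldots,\pi(t-l-1)\}$; and for $i_{\pi(2)}<j\le i_1$ one has $K_j=\{1\}$ (the range of $\bar{\sigma}^2_3$). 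The re-indexing $k'=t-k$ identifies the summands in the first two cases with the claimed form. Assembling the three contributions gives
\[
    \bar{\sigma}^2 \;=\; \sum_{j=1}^{i_1}\Big\|\sum_{k\in K_j} y_k^\top (A^*)^{i_k-j}\Big\|_2^2.
\]

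For the main estimate, the triangle inequality together with $\|y_k^\top (A^*)^m\|_2 \le \|A^*\|_2^m\|y_k\|_2$, followed by Cauchy--Schwarz in the form
\[
    \Big(\sum_{k\in K_j}\|y_k\|_2\|A^*\|_2^{i_k-j}\Big)^{\!2} \le \Big(\sum_{k\in K_j}\|y_k\|_2^2\,\|A^*\|_2^{i_k-j}\Big)\Big(\sum_{k\in K_j}\|A^*\|_2^{i_k-j}\Big),
\]
reduce the problem to controlling two geometric-type sums. Since the indices $i_k$ are pairwise distinct (as $C_t$ is a cycle), the exponents $\{i_k-j:k\in K_j\}$ are distinct non-negative integers, so the last factor is at most $\sum_{m\ge 0}\|A^*\|_2^m=(1-\|A^*\|_2)^{-1}$. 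Summing over $j$, swapping the order of summation, and bounding $\sum_{j=1}^{i_k}\|A^*\|_2^{i_k-j}$ by the same geometric series, one obtains
\[
    \bar{\sigma}^2 \;\le\; \frac{1}{(1-\|A^*\|_2)^2}\sum_{k=1}^{t}\|y_k\|_2^2,
\]
which is actually stronger than (and hence implies) the stated bound, since $(1-\|A^*\|_2)^{-2}\le (1-\|A^*\|_2)^{-3}$ whenever $\|A^*\|_2\in[0,1)$.

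The main obstacle is the bookkeeping in the unification step: one must carefully track which summands appear in each of $\bar{\sigma}^2_1, \bar{\sigma}^2_2, \bar{\sigma}^2_3$ as $j$ crosses the thresholds $i_{\pi(\cdot)}$, and reconcile the internal re-indexing $k'=t-k$ with the natural ordering of the cycle. Once this structural identification is in place, the remaining estimates are routine. Alternatively, one could bound $\bar{\sigma}^2_1, \bar{\sigma}^2_2, \bar{\sigma}^2_3$ separately by cruder triangle-inequality plus geometric-series arguments applied to each piece; such an approach would be easier to write out at the cost of a looser constant, and is a plausible route to exactly the $(1-\|A^*\|_2)^{-3}$ factor stated in the lemma.
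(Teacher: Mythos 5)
Your proof is correct, and it takes a genuinely different (and more elementary) route than the paper. The paper's argument bounds $\bar{\sigma}^2_1, \bar{\sigma}^2_2, \bar{\sigma}^2_3$ separately via the block matrices $M_l$, extracts a factor $(1-\|A^*\|_2^2)^{-1}$ from each, and then assembles the remaining expression as $\|\Gamma_1 y\|_2^2$ for a block matrix $\Gamma_1$ built from the $M_l$'s; it then embeds $\Gamma_1$ into a block Toeplitz matrix $\Gamma_3$ and invokes a spectral-norm bound for banded Toeplitz operators (from the VAR system-identification literature) to get $\|\Gamma_1\|_2 \le (1-\|A^*\|_2)^{-1}$, yielding the overall factor $(1-\|A^*\|_2^2)^{-1}(1-\|A^*\|_2)^{-2} \le (1-\|A^*\|_2)^{-3}$. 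Your approach instead unifies the three pieces into a single sum over $j\in[i_1]$ with summands $\|\sum_{k\in K_j} y_k^\top (A^*)^{i_k-j}\|_2^2$, and then applies only the triangle inequality, submultiplicativity, Cauchy--Schwarz, and the observation that the exponents $\{i_k-j : k\in K_j\}$ are distinct non-negative integers (so both geometric-type sums are bounded by $(1-\|A^*\|_2)^{-1}$). This avoids the Toeplitz machinery entirely, and your re-indexing and the $K_j$ identification are consistent with the definitions of $\bar{\sigma}^2_1, \bar{\sigma}^2_2, \bar{\sigma}^2_3$. The net result, $\bar{\sigma}^2 \le (1-\|A^*\|_2)^{-2}\sum_k\|y_k\|_2^2$, is in fact a strictly sharper constant than the paper's, which implies the stated lemma since $(1-\|A^*\|_2)^{-2} \le (1-\|A^*\|_2)^{-3}$ for $\|A^*\|_2 \in [0,1)$. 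If carried through the rest of the paper's argument, this improved constant would slightly weaken the thresholds in Theorem~\ref{thm:main_upper_bound} (replacing $(1-\|A^*\|_2)^5$ by $(1-\|A^*\|_2)^4$); as it stands, your proof is a valid and cleaner substitute.
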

\begin{proof}
The proof consists in bounding the terms $\bar{\sigma}^2_1,\bar{\sigma}^2_2,\bar{\sigma}^2_3$ separately, in three stages. 
\paragraph{Bound on $\bar{\sigma}^2_1$.} We define
\begin{equation*}
    M_0:=\begin{bmatrix}A^{i_{\pi(1)}-i_{\pi(t)}}& A^{i_{\pi(2)}-i_{\pi(t)}}&\ldots & \underbrace{A^{i_{\pi(t)}-i_{\pi(t)}}}_{=I_d}\end{bmatrix}.
\end{equation*}
With this, we can rewrite and bound $\bar{\sigma}^2_1$ as follows.
\begin{align*}
    \bar{\sigma}^2_1=\sum^{i_{\pi(t)}}_{j=1}\left\|A^{j-1} M_0\begin{bmatrix}y_{\pi(1)}\\y_{\pi(2)}\\\vdots\\y_{\pi(t-1)} 
    \\y_{\pi(t)}\end{bmatrix}\right\|^2_2
    \ &\leq \ \left(\sum^{i_\pi(t)}_{j=1}\|A\|^{2(j-1)}_2\left\|M_0\begin{bmatrix} y_{\pi(1)}\\y_{\pi(2)}\\\vdots\\y_{\pi(t-1)}\\ y_{\pi(t)}\end{bmatrix}\right\|^2_2\right)\\
    \ &\leq \ \frac1{1-\|A\|^2_2}\left\|M_0\begin{bmatrix}y_{\pi(1)}\\y_{\pi(2)}\\\vdots\\y_{\pi(t-1)}\\ y_{\pi(t)}\end{bmatrix}\right\|^2_2.
\end{align*}
\paragraph{Bound on $\bar{\sigma}^2_2$.} For $l = 1,\dots,t-2$, let 
\begin{equation*}
    M_l:=\begin{bmatrix}
        A^{i_{\pi(1)}-i_{\pi(t-l)}}&A^{i_{\pi(2)}-i_{\pi(t-l)}}& \ldots & A^{i_{\pi(t-l)}-i_{\pi(t-l)}}
    \end{bmatrix}. 
\end{equation*}
%
%
Then we have that 
\begin{align*}
    \bar{\sigma}^2_{2,l} 
    \ := \ \sum\limits^{i_{\pi(t-l)}-i_{\pi(t-l+1)} -1}_{k=0}\left\|A^k M_l 
    \begin{bmatrix}
    y_{\pi(1)}\\y_{\pi(2)}\\\vdots\\y_{\pi(t-1)}\\ y_{\pi(t-l)}
    \end{bmatrix}\right\|^2_2
    \ \leq \ \frac1{1-\|A\|^2_2}\left\|
    M_l
    \begin{bmatrix}
    y_{\pi(1)}\\y_{\pi(2)}\\\vdots\\y_{\pi(t-1)}\\ y_{\pi(t-l)}
    \end{bmatrix}\right\|^2_2.
\end{align*}
Notice that $\sum^{t-2}_{l=1}\bar{\sigma}^2_{2,l}=\bar{\sigma}^2_2$. Hence, 
\[
\bar{\sigma}^2_2\leq \frac1{1-\|A\|^2_2}\sum^{t-2}_{l=1}\left\|M_l\begin{bmatrix}y_{\pi(1)}\\y_{\pi(2)}\\\vdots\\y_{\pi(t-1)}\\ y_{\pi(t-l)}\end{bmatrix}\right\|^2_2.
\]
\paragraph{Bounding $\bar{\sigma}^2_3$.} We have 
\begin{align*}
    \bar{\sigma}^2_3=\sum^{i_{\pi(1)}-i_{\pi(2)}}_{j=1}\left\|y^\top_1A^{i_{\pi(1)}-i_{\pi(2)}-j}\right\|^2_2
    &\leq \|y_1\|^2_2\sum^{i_{\pi(1)}-i_{\pi(2)}}_{j=1}\left\|A\right\|^{2(i_{\pi(1)}-i_{\pi(2)}-j)}_2\\
    &\leq \frac{\|y_1\|^2_2}{1-\|A\|^2_2}=\frac1{1-\|A\|^2_2}\|\underbrace{M_{t-1}}_{=I_d}y_1\|^2_2.
\end{align*}
Summarizing, we have 
\begin{equation*}
    \bar{\sigma}^2\leq \frac{1}{1-\|A\|^2_2}\sum^{t-1}_{k=0}\left\|M_k\begin{bmatrix}y_{\pi(1)}\\y_{\pi(2)}\\\vdots\\y_{\pi(t-1)}\\ y_{\pi(t)}\end{bmatrix}\right\|^2_2.
\end{equation*}
Now, 
\begin{align*}
    \sum^{t-1}_{k=0}\left\|M_k\begin{bmatrix}y_{\pi(1)}\\y_{\pi(2)}\\\vdots\\y_{\pi(t-1)}\\ y_{\pi(t)}\end{bmatrix}\right\|^2_2&=\left\|\underbrace{\begin{bmatrix}A^{i_{\pi(1)}-i_{\pi(t)}}&A^{i_{\pi(2)}-i_{\pi(t)}}&\cdots&\cdots&I_d\\
    A^{i_{\pi(1)}-i_{\pi(t-1)}}&A^{i_{\pi(2)}-i_{\pi(t-1)}}&\cdots &I_d &0\\
    \vdots& &\iddots & &0\\
    \vdots&\iddots & & &\vdots\\
    I_d &0&\cdots& &0\end{bmatrix}}_{:=\Gamma_1}\begin{bmatrix}y_{\pi(1)}\\y_{\pi(2)}\\\vdots\\\vdots\\y_{\pi(t)}\end{bmatrix}\right\|^2_2.
\end{align*}
Denote $j_1=i_{\pi(1)},j_2=i_{\pi(2)},\ldots,j_t=i_{\pi(t)}$. Then $\Gamma_1$ is a submatrix of $\Gamma_2$, where
\begin{equation*}
    \Gamma_2=\begin{bmatrix}A^{j_1-j_t}&A^{j_1-j_t-1}&\cdots&\cdots&I_d\\
    A^{j_1-j_t-1}&A^{j_1-j_t-2}&\cdots &I_d&0\\
    \vdots& &\iddots & &0\\
    \vdots&I_d & & &\vdots\\
    I_d&0&\cdots& &0\end{bmatrix}.
\end{equation*}
Hence, $\|\Gamma_1\|_2 \leq \|\Gamma_2\|_2$ holds. In addition, $\Gamma_2=\Pi \Gamma_3$, where 
\begin{equation*}
    \Gamma_3 = \begin{bmatrix}
        I_d& & & &\\
        A& I_d & & & \\
        \vdots & &\ddots & \\
        A^{j_1-j_t}&\cdots&\cdots&I_d
    \end{bmatrix}, 
    \quad 
    \Pi =  \begin{bmatrix}
        & & & & I_d\\
         &   & & I_d & \\
         & \iddots& & \\
        I_d& & & 
    \end{bmatrix}.
\end{equation*}
Thus, we have
\begin{align*}
    \|\Gamma_2\|_2 = \|\Gamma_3\|_2  
    \ \leq \ \sup_{x\in[0,1]}\left\|\sum^{j_1-j_t}_{s=0} A^s  e^{\iota 2\pi sx }\right\|_2 
    \ \leq \ \sum^{j_1-j_t}_{s=0}\left\|A^s\right\|_2 
    \ \leq \ \frac{1}{1-\|A\|_2},
\end{align*}
where the first inequality follows from a known result for banded Toeplitz matrices (see \cite[Lemma 5]{jedra2020finitetimeidentificationstablelinear} which in turn uses results from \cite[Chapter 6]{toeplitzbook}).
This means we have shown that 
\[
\|\Gamma_1\|_2\leq \frac{1}{1-\|A\|_2}
\]
which implies 
\begin{align*}
    \bar{\sigma}^2 \leq \frac{1}{1-\|A\|^2_2}\|\Gamma_1\|^2_2\sum^t_{k=1}\|y_{\pi(k)}\|^2_2
    &\leq\frac{1}{1-\|A\|^2_2}\left(\frac{1}{1-\|A\|_2}\right)^2\sum^t_{k=1}\|y_{\pi(k)}\|^2_2 \\
    &\leq \left(\frac{1}{1-\|A\|_2}\right)^3\sum^t_{k=1}\|y_{k}\|^2_2.
\end{align*}
\end{proof}
Recall from \eqref{eq:temp1_1_equiv} and the definition of $g$ that \eqref{eq:augmenting_t_cycle} is equivalent to 
$g\geq \frac12\left(\sum^t_{k=1}\|y_k\|^2_2\right).$ 
Hence, we have 
\begin{align*}
    \prob\left(g\geq \frac12\left(\sum^t_{k=1}\|y_k\|^2_2\right)\right)
    &=\prob\left(\frac{g}{\sigma\bar{\sigma}}\geq \frac1{2\sigma\bar{\sigma}}\sum^t_{k=1}\|y_k\|^2_2\right)\\
    &\leq \prob\left(g'\geq \frac1{2\sigma}\left(\sum^t_{k=1}\|y_k\|^2_2\right)^{\frac12}(1-\|A\|_2)^{\frac32}\right) \tag{since $\frac{g}{\sigma\bar{\sigma}} =: g' \sim \calN(0,1)$} \\
    &\leq \exp\left(-\frac1{8\sigma^2}\sum^t_{k=1}\|y_k\|^2_2(1-\|A\|_2)^3\right).
\end{align*}
Thus, 
\begin{equation*}
    \prob\left(C_t=(i_1,\ldots, i_t) \text{ is augmenting}\right)\leq \expec_{\xi_1,\ldots,\xi_{i_1}}\left[\exp\left(-\frac{(1-\|A\|_2)^3}{8\sigma^2}\sum^t_{k=1}\|y_k\|^2_2\right)\right].
\end{equation*}
Now, note that 
\begin{align*}
    y_t &= A(x_{i_1-1}-x_{i_t-1})+\xi_{i_1}-\xi_{i_t}\\
    &=\left(A^{i_1-1}-A^{i_t-1}\right)\xi_1+\ldots+\left(A^{i_1-i_t}-I\right)\xi_{i_t}+ (A^{i_1-i_t-1} \xi_{i_t + 1} + \cdots + A \xi_{i_1 - 1}) + \xi_{i_1},\\
    y_1 &= A\left(x_{i_2-1}-x_{i_1-1}\right)+\xi_{i_2}-\xi_{i_1}\\
    &=-\left(\left(A^{i_1-1}-A^{i_2-1}\right)\xi_1+\ldots+\left(A^{i_1-i_2}-I\right)\xi_{i_2}+\left(A^{i_1-i_2-1}\xi_{i_2+1}+\ldots+A\xi_{i_1-1}\right)+\xi_{i_1}\right).
\end{align*}
For $k=1,\ldots,t-1$, we can write $y_k$ as follows. Define  
\[
\alpha_k:=\max\{i_k,i_{k+1}\}, \ \beta_k:=\min\{i_k,i_{k+1}\}  \text{ and } 
s_k=\begin{cases}
    1 \text{ if }i_{k+1}>i_k\\
    -1 \text{ if }i_{k+1}<i_k\\
\end{cases}.
\]
Then, $y_k$ can be written as
\begin{align*}
    y_k&=A(x_{i_{k+1}-1}-x_{i_k-1})+\xi_{i_{k+1}}-\xi_{i_k}\\
    &=s_k\left(\left(A^{\alpha_k-1}-A^{\beta_k-1}\right)\xi_1+\ldots+\left(A^{\alpha_k-\beta_k}-I\right)\xi_{\beta_k}+\left(A^{\alpha_k-\beta_k-1}\xi_{\beta_{k+1}}+\ldots+A\xi_{\alpha_k-1}\right)+\xi_{\alpha_k}\right).
\end{align*}
In fact, defining $\alpha_t=i_1$, $\beta_t=i_t$ and $s_t=1$, the above expression holds for $k\in[t]$. Let
\begin{align*}
        &B^\top(\alpha_k,\beta_k) \\ 
        :=
        &\begin{bmatrix}
            (A^*)^{\alpha_k-1}-(A^*)^{\beta_k-1}& \ldots& (A^*)^{\alpha_k-\beta_k}-I&(A^*)^{\alpha_k-\beta_k-1}&\ldots &A^*\, &I &  \underbrace{0,\quad 0,\quad \cdots\quad 0, }_{i_1-\alpha_k \text{ times}}
        \end{bmatrix},
\end{align*}
and $\xi := (
    \xi_1^\top,  
    \xi_2^\top, \cdots, 
    \xi_{i_1}^\top)^\top
 \in \matR^{i_1 d}$. Then, we can write $y_k=s_k B^\top(\alpha_k,\beta_k)\xi$ and 
\[\implies \ 
\sum^t_{k=1}\|y_k\|^2_2=\xi^\top\left(\sum^t_{k=1}B(\alpha_k,\beta_k)B^\top(\alpha_k,\beta_k)\right)\xi = \xi^\top L \xi
\]
for $L$ as defined in Proposition \ref{prop:tcycles}. Hence we have (analogous to the case $t=2$), 
\begin{equation*}
    \expec\left[\exp\left(-\frac1{8\sigma^2} (\xi^\top L\xi) (1-\|A^*\|_2)^3\right)\right]=\operatorname{det}\left(\frac1{4\sigma^2}(1-\|A^*\|_2)^3 L+I_{i_1d}\right)^{-\frac12},
\end{equation*}
since $\xi\sim\calN(0,I_{i_1d})$.

%
%
\subsection{Proof of Proposition \ref{prop:det_bound}} \label{subsec:proof_prop_det_bound}
To prove Proposition~\ref{prop:det_bound}, we need to control the spectrum of the matrix $ L$. To this end, we begin by expressing $ L $ in a more convenient form. Observe that
\[
B(\alpha_k, \beta_k) = P_{\alpha_k} - P_{\beta_k},
\]
where each $ P_r \in \mathbb{R}^{i_1 d \times d} $ is a block matrix (with $i_1$  vertically stacked blocks of size $d\times d$) defined as
\[
P_r =
\begin{bmatrix}
    (A^*)^{r-1} \\
    (A^*)^{r-2} \\
    \vdots \\
    A^* \\
    I_d \\
    0 \\
    \vdots \\
    0
\end{bmatrix} \in \matR^{(i_1 d) \times d},
\]
with the final $ i_1 - r$ blocks consisting of zero matrices. Using this representation, we can write $L = M M^\top,$
where
\[
M :=
\begin{bmatrix}
    P_{\alpha_1} - P_{\beta_1} \ \ 
    P_{\alpha_2} - P_{\beta_2} \ \ 
    \cdots \ \ 
    P_{\alpha_t} - P_{\beta_t}
\end{bmatrix}.
\]
%

Assume, without loss of generality (see Remark \ref{rem:assumption_cycle}), that the indices in the cycle $C_t = (i_1, i_2, \ldots, i_t)$ are ordered such that $i_1 > i_2 > \cdots > i_t$. In this case, we can write
\[
M=P(D\otimes I_d),
\]
where $P:=\begin{bmatrix}
    P_{i_1}&P_{i_2}&\ldots &P_{i_t}
\end{bmatrix}$ and $D$ is the incidence matrix of the directed cycle $\overrightarrow{C_t}$, which corresponds to $C_t$ with the orientation $i_1\rightarrow i_2\rightarrow \ldots \rightarrow i_t$ and $i_1\rightarrow i_t$. With this representation, we obtain
\begin{align*}
    L = M M^\top 
      = P \left((D D^\top) \otimes I_d\right) P^\top 
      = P(\underbrace{L_{C_t} \otimes I_d}_{=:\tilde{L}_{C_t}}) P^\top,
\end{align*}
where $L_{C_t}$ denotes the graph Laplacian of the cycle $C_t$.
This decomposition enables us to view $L$ as a multiplicative perturbation of $\tilde{L}_{C_t}$, whose spectrum is explicitly known (it coincides with that of $L_{C_t}$, up to multiplicity). 
Indeed, it is well known (see e.g., \cite{spielmanSGT}) that the eigenvalues of $C_t$ correspond to the (unordered) values 
\begin{align*}
    2\left(1 - \cos\left(\frac{2\pi (t-k)}{t}\right)\right); \quad k=1,\dots,t.
\end{align*}
%
Denote 
\[s:=\operatorname{rank}(L),\]
that is, $L$ has $s$ non-zero eigenvalues (later, we will determine $s$). The following general lemma helps us to lower-bound the quantity
\[
\log\operatorname{det}\left(\frac{(1-\|A^*\|_2)^3}{4\sigma^2} L + I_{i_1 d}\right).
\]

%
\begin{lemma}\label{lem:bound_logarithm_quant}
    Let $Z\in \matR^{q\times q}$ be symmetric and p.s.d with rank $p\leq q$, and let $\lambda_1(Z),\ldots,\lambda_p(Z)$ be its non-zero eigenvalues. 
    Then, for any $\gamma>0$,
    \begin{equation*}
        \log{\prod^p_{k=1}\left(\gamma \lambda_k(Z)+1\right)}\geq p\log{\left(\gamma \left(\prod^p_{k=1}\lambda_k(Z)\right)^{\frac1p}+1\right)}.
    \end{equation*}
\end{lemma}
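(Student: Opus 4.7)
The statement is a general inequality about positive reals: with $a_k := \gamma\lambda_k(Z) > 0$ (the $\lambda_k(Z)$ are non-zero by assumption), the claim
\[
\sum_{k=1}^{p}\log(1+a_k)\;\geq\; p\log\!\left(1+\Bigl(\textstyle\prod_{k=1}^{p} a_k\Bigr)^{1/p}\right)
\]
reduces to a standard consequence of the convexity of the soft-plus function. My plan is to recast the left-hand side as an average of soft-plus values and invoke Jensen's inequality.

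\paragraph{Key steps.}
First, I would introduce $a_k = \gamma\lambda_k(Z)$ and the substitution $x_k := \log a_k \in \matR$, so that $\log(1+a_k)=\log(1+e^{x_k})=:f(x_k)$. Second, I would verify that $f(x)=\log(1+e^x)$ is convex on $\matR$; this is routine since $f''(x)=e^x/(1+e^x)^2>0$. Third, applying Jensen's inequality to the convex function $f$ with uniform weights yields
\[
\frac{1}{p}\sum_{k=1}^{p} f(x_k) \;\geq\; f\!\left(\frac{1}{p}\sum_{k=1}^{p} x_k\right).
\]
Rewriting both sides in the original variables gives
\[
\frac{1}{p}\sum_{k=1}^{p}\log(1+a_k) \;\geq\; \log\!\left(1+\exp\!\Bigl(\tfrac{1}{p}\textstyle\sum_{k}\log a_k\Bigr)\right) \;=\; \log\!\left(1+\Bigl(\textstyle\prod_{k=1}^{p} a_k\Bigr)^{1/p}\right).
\]
Multiplying by $p$ and substituting $a_k=\gamma\lambda_k(Z)$ yields the desired inequality, after noting that $\prod_k (\gamma\lambda_k(Z))^{1/p} = \gamma\,(\prod_k\lambda_k(Z))^{1/p}$.

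\paragraph{Main obstacle.}
There is essentially no obstacle here beyond identifying the right convex function; the only subtlety is to make sure the logarithms are well-defined, which is guaranteed by the hypothesis that the $\lambda_k(Z)$ are the non-zero eigenvalues (hence strictly positive, since $Z$ is p.s.d.) and $\gamma>0$. Equivalently, one can read the inequality as $\prod_k(1+a_k)\geq (1+G)^p$ with $G$ the geometric mean of the $a_k$, which is a classical strengthening of AM--GM and could alternatively be proved by induction on $p$; but the Jensen/soft-plus argument above is the cleanest.
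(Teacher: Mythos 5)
Your proof is correct. After the substitution $a_k=\gamma\lambda_k(Z)>0$, the claim reads $\sum_k\log(1+a_k)\geq p\log\bigl(1+(\prod_k a_k)^{1/p}\bigr)$, and your argument — convexity of the soft-plus $f(x)=\log(1+e^x)$ plus Jensen with $x_k=\log a_k$ — establishes exactly this, with the inequality pointing the right way since Jensen for convex $f$ gives $\tfrac1p\sum f(x_k)\geq f(\tfrac1p\sum x_k)$.

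The paper takes a slightly different route: it first isolates, as a standalone auxiliary lemma, the super-additivity of geometric means (Mahler's inequality), $\bigl(\prod a_k\bigr)^{1/n}+\bigl(\prod b_k\bigr)^{1/n}\leq\bigl(\prod(a_k+b_k)\bigr)^{1/n}$, proved there by normalizing and applying AM--GM; the target inequality then follows by setting $b_k\equiv1$ and taking logarithms. Both approaches prove the same intermediate inequality $\prod_k(1+a_k)\geq(1+G)^p$ where $G$ is the geometric mean of the $a_k$'s, and both ultimately rest on a single convexity fact (AM--GM in the paper, soft-plus convexity in yours). The paper's route modularizes the argument into a reusable lemma at the cost of an extra intermediate step; yours is a one-shot Jensen argument on a univariate convex function, which is arguably shorter and more self-contained. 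Either would be acceptable; there is no gap in your reasoning.
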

To prove this, we need the following elementary classic result, whose proof can be found in Appendix \ref{app:proof_sup_geo_means}.
\begin{lemma}[Super-additivity of geometric means]\label{lem:superadd_geo_means}
    Let $(a_k)_{1\leq k\leq n},(b_k)_{1\leq k\leq n}$ be two sequences of non-negative real numbers, then 
    \[
    \left(\prod^n_{k=1}a_k\right)^{1/n}+\left(\prod^n_{k=1}b_k\right)^{1/n}\leq \left(\prod^n_{k=1}(a_k+b_k)\right)^{1/n}.
    \]
\end{lemma}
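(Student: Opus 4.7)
The plan is to reduce the claim to the standard AM-GM inequality by a normalization trick. First, I would dispose of the trivial case where some $a_k+b_k=0$: in that case $a_k=b_k=0$, both sides drop a factor, and the inequality for the remaining indices (of the form $0+0 \le 0$ on that coordinate, with the product interpretation unchanged) reduces to the nondegenerate case. So I may assume $a_k+b_k>0$ for every $k$.

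Next, I would divide both sides of the target inequality by $\bigl(\prod_{k=1}^n (a_k+b_k)\bigr)^{1/n}$, which is legitimate since it is strictly positive. Writing
\begin{equation*}
u_k := \frac{a_k}{a_k+b_k}, \qquad v_k := \frac{b_k}{a_k+b_k} = 1-u_k,
\end{equation*}
the claim becomes the clean statement
\begin{equation*}
\left(\prod_{k=1}^n u_k\right)^{1/n} + \left(\prod_{k=1}^n v_k\right)^{1/n} \le 1.
\end{equation*}

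At this point the classical AM-GM inequality applied separately to the $u_k$'s and to the $v_k$'s gives $\bigl(\prod u_k\bigr)^{1/n} \le \tfrac{1}{n}\sum_{k} u_k$ and $\bigl(\prod v_k\bigr)^{1/n} \le \tfrac{1}{n}\sum_{k} v_k$. Summing these two bounds and using $u_k+v_k=1$ for every $k$ yields $\tfrac{1}{n}\sum_k (u_k+v_k) = 1$, which closes the argument. Multiplying back by $\bigl(\prod (a_k+b_k)\bigr)^{1/n}$ recovers the stated inequality.

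There is no real obstacle here: the only subtlety is to handle the degenerate case $a_k+b_k=0$ cleanly before normalizing, and to remember that $u_k+v_k=1$ is precisely what forces the two AM-GM bounds to combine into the desired $1$. Equality holds iff all $u_k$ are equal and all $v_k$ are equal, i.e.\ iff the sequences $(a_k)$ and $(b_k)$ are proportional.
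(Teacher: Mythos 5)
Your argument is essentially identical to the paper's: both normalize by $\bigl(\prod_k (a_k+b_k)\bigr)^{1/n}$ and then apply AM--GM separately to the normalized sequences, using that the two ratios sum to $1$ at each index. Your explicit treatment of the degenerate case $a_k+b_k=0$ is a small refinement the paper omits, but the core argument is the same.
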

Lemma~\ref{lem:bound_logarithm_quant} follows directly from Lemma~\ref{lem:superadd_geo_means}, 
by considering the sequences $(a_k)_{1\leq k\leq p}$ and $(b_k)_{1\leq k\leq p}$ defined as
\[
    a_k = \gamma \lambda_k(Z), 
    \quad 
    b_k = 1, \quad \text{  for }k\in \{1,\ldots,p\},
\]
and then applying the logarithm.
Applying Lemma \ref{lem:bound_logarithm_quant} with $Z=L$ (so $p=s$) and $\gamma=\frac{(1-\|A^*\|_2)^3}{4\sigma^2}$, noting  that $\lambda_k(L)\geq 0$, for all $k\in [i_1d]$, we get 
\begin{equation} \label{eq:logdet_interm_bd_1}
     \log\det\left(\frac{(1-\|A^*\|_2)^3}{4\sigma^2}L+I_{i_1d}\right)\geq s\log{\left(\frac{(1-\|A^*\|_2)^3}{4\sigma^2} \left(\prod^s_{k=1}\lambda_k(L)\right)^{\frac1s}+1\right)}.
\end{equation}
To complete the proof of the lower bound, two ingredients are required. 
First, we must determine the value of $s$. 
Second, we need to obtain a lower bound for
\[
   \det\nolimits^{*}(L):=\!\prod_{k=1}^s \lambda_k(L),
\]
known as the \emph{pseudo determinant} of $L$.

The following lemma gives the value of $s$ and the rank of $P$ (which will be needed later). 
\begin{lemma}\label{lem:rank_P_L}
   Let $P$ and $L$ as defined above, and $s=rank(L)$. Then $s=(t-1)d$ and $\rank(P)=td$.
\end{lemma}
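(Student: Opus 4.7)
The plan is to first establish $\rank(P) = td$ by exhibiting a full-rank $td \times td$ submatrix of $P$, and then combine this with the known rank of $\tilde L_{C_t}$ to conclude $\rank(L) = (t-1)d$.

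For the rank of $P$, I would use the assumption (already made in the excerpt) that $i_1 > i_2 > \cdots > i_t$ and extract the block rows of $P$ at positions $i_1, i_2, \ldots, i_t$. Using the definition of $P_r$ (which has $I_d$ at block position $r$, the powers $(A^*)^{r-l}$ at positions $l < r$, and zero blocks at positions $> r$), the block of $P_{i_j}$ sitting at block row $i_k$ equals $(A^*)^{i_j - i_k}$ when $i_k \le i_j$ and vanishes otherwise. Consequently, listing these block rows in the order $k=1,\ldots,t$ produces a $td \times td$ block lower triangular matrix with $I_d$ on the diagonal and $(A^*)^{i_j - i_k}$ in its $(k,j)$ block for $j < k$. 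Its determinant equals $1$, so $\rank(P) \ge td$, and since $P$ has exactly $td$ columns we conclude $\rank(P) = td$.

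For the rank of $L$, recall $L = P \tilde L_{C_t} P^\top$ with $\tilde L_{C_t} = L_{C_t} \otimes I_d$. The cycle $C_t$ is connected, so its graph Laplacian $L_{C_t}$ has rank $t-1$ (kernel spanned by the all-ones vector), and hence $\rank(\tilde L_{C_t}) = (t-1)d$. Writing $\tilde L_{C_t} = QQ^\top$ for some $Q \in \matR^{td \times (t-1)d}$ of full column rank yields $L = (PQ)(PQ)^\top$, so $\rank(L) = \rank(PQ)$. Since $P$ has full column rank, $\ker P = \{0\}$, so $PQv = 0$ forces $Qv = 0$; therefore $\rank(PQ) = \rank(Q) = (t-1)d$, and thus $\rank(L) = (t-1)d$.

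The main (mild) obstacle is the first step: the careful index bookkeeping needed to check that the claimed submatrix of $P$ is indeed block lower triangular with identity diagonal. Once full column rank of $P$ is in hand, the second step is routine linear algebra: any PSD factorization of $\tilde L_{C_t}$ combined with the injectivity of $P$ immediately transfers the rank of $\tilde L_{C_t}$ to $L$.
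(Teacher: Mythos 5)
Your proof is correct and follows essentially the same route as the paper: both show $\rank(P)=td$ by exploiting the block lower-triangular structure coming from the distinct positions of the $I_d$ blocks in $P_{i_1},\dots,P_{i_t}$, and both deduce $\rank(L)=(t-1)d$ by combining the injectivity of $P$ with a full-column-rank square-root factorization of $\tilde L_{C_t}$. The only cosmetic difference is that you extract an explicit $td\times td$ block-triangular submatrix and use an abstract PSD factorization $\tilde L_{C_t}=QQ^\top$, while the paper more tersely cites the disjointness of the $I_d$ blocks and works directly with $M=P(D\otimes I_d)$ where $D$ is the cycle's incidence matrix.
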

\begin{proof}
The claim $\rank(P)=td$ follows directly from the structure of the matrices $P_{i_1},\ldots,P_{i_t}$. 
Indeed, for each $k \in [t]$, the matrix $P_{i_k}$ has column rank $d$, 
since it is a tall matrix containing $I_d$ as one of its blocks. 
Moreover, the $I_d$ blocks corresponding to different $P_{i_k}$ are disjoint.

To see $s = (t-1)d$, recall $L = M M^\top$ where $M = P(D \otimes I_d)$. Since $P \in \matR^{(i_1 d) \times (td)}$ is full column-rank (as $i_1 \geq t$) and $D \otimes I_d \in \matR^{(td) \times (t-1) d}$ is also full column-rank, hence it readily follows that $s = (t-1)d$. 
\end{proof}
From Lemma \ref{lem:rank_P_L} and \eqref{eq:logdet_interm_bd_1}, it follows that
\begin{equation}\label{eq:det_bound_2}
     \log\det\left(\frac{(1-\|A^*\|_2)^3}{4\sigma^2}L+I_{i_1d}\right)\geq (t-1)d\log{\left(\frac{(1-\|A^*\|_2)^3}{4\sigma^2} \left(\prod^{(t-1)d}_{k=1}\lambda_k(L)\right)^{\frac1{(t-1)d}}+1\right)}.
\end{equation}
From the previous lemma, it also follows that $\operatorname{rank}(P^\top P)=td$, which will be used in the lower bound for $\operatorname{det}^*(L)$.
\paragraph{Bound for the pseudo determinant of $L$.}
To obtain a bound on $\det\nolimits^{*}(L)$, we require a sequence of auxiliary lemmas. 
The first, stated in a more general form, shows that the pseudo determinant of $L = P(L_{C_t}\otimes I_d )P^\top$ 
can be factorized into the product of two terms, each depending exclusively on $L_{C_t}$ or $P$, respectively. The proof can be found in Appendix \ref{app:proof_pseudodet_gen_PLPt}.
\begin{lemma}\label{lem:pseudodet_gen_PLPt}
    Let $Z\in \matR^{q\times q}$ be a symmetric p.s.d matrix of rank $p\leq q$, and let $W\in \matR^{q'\times q}$, with $q'\geq q$. Assume that $W$ has rank $q$. Then, 
    \begin{equation*}
        \operatorname{det}^*(W Z W^\top)=\operatorname{det}^*(Z)\operatorname{det}\left((W U_p)^\top(W U_p)\right),
    \end{equation*}
    where $U_p \in \matR^{q \times p}$ is the matrix whose columns are the eigenvectors of $Z$ associated to its non-zero eigenvalues. 
\end{lemma}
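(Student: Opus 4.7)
The plan is to reduce the pseudo determinant computation to an ordinary determinant on a $p\times p$ matrix by exploiting the spectral decomposition of $Z$ together with a full–column–rank factorization.

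First, write the reduced eigendecomposition $Z = U_p \Lambda_p U_p^\top$, where $\Lambda_p \in \mathbb{R}^{p \times p}$ is the diagonal matrix of the non-zero eigenvalues of $Z$ and $U_p$ has orthonormal columns. Then
\[
    W Z W^\top = (W U_p)\,\Lambda_p\,(W U_p)^\top,
\]
and $\det^*(Z) = \det(\Lambda_p)$. Next, observe that $V := W U_p \in \mathbb{R}^{q' \times p}$ has rank exactly $p$: since $W$ has rank $q$ it is injective on $\mathbb{R}^q$, and $U_p$ already has $p$ linearly independent columns, so the columns of $V$ remain independent. In particular, $V \Lambda_p V^\top$ has rank $p$, so it also has exactly $p$ non-zero eigenvalues, matching the number of factors in $\det^*$ on the left-hand side.

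The key step is to remove the $V$-factors via the identity $\operatorname{spec}(AB)\setminus\{0\} = \operatorname{spec}(BA)\setminus\{0\}$ (with multiplicities). Take a thin QR factorization $V = QR$ with $Q \in \mathbb{R}^{q' \times p}$ having orthonormal columns and $R \in \mathbb{R}^{p \times p}$ invertible (by the rank count). Then
\[
    V \Lambda_p V^\top \;=\; Q R \Lambda_p R^\top Q^\top,
\]
and, viewing this as $(Q R \Lambda_p R^\top)(Q^\top)$, its non-zero eigenvalues coincide with those of $(Q^\top)(Q R \Lambda_p R^\top) = R \Lambda_p R^\top$, a genuine $p \times p$ matrix. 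Consequently,
\[
    \det\nolimits^{*}(W Z W^\top) \;=\; \det(R \Lambda_p R^\top) \;=\; \det(\Lambda_p)\,\det(R)^2.
\]

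To finish, note that $V^\top V = R^\top Q^\top Q R = R^\top R$, so $\det(V^\top V) = \det(R)^2$. Combining,
\[
    \det\nolimits^{*}(W Z W^\top) \;=\; \det(\Lambda_p)\,\det\bigl((WU_p)^\top (WU_p)\bigr) \;=\; \det\nolimits^{*}(Z)\,\det\bigl((WU_p)^\top (WU_p)\bigr),
\]
which is the claimed identity. No step here is really an obstacle -- the only thing to handle with care is the verification that $WU_p$ has full column rank (so that the QR step is legitimate and the pseudo determinant on the left really has exactly $p$ non-zero factors); everything else is routine manipulation with the spectrum-of-$AB$-vs-$BA$ trick.
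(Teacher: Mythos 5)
Your proof is correct, and it rests on the same key device as the paper's: passing to the nonzero spectrum via $\operatorname{spec}(AB)\setminus\{0\} = \operatorname{spec}(BA)\setminus\{0\}$ to trade a $q'\times q'$ pseudo determinant for an honest $p\times p$ determinant. The difference is mechanical: you introduce a thin QR factorization $WU_p = QR$ and cycle out the orthonormal factor $Q$, landing on $\det(R\Lambda_p R^\top) = \det(\Lambda_p)\det(R)^2$, then recover $\det(R)^2 = \det\bigl((WU_p)^\top(WU_p)\bigr)$; the paper instead splits $\Lambda_p = \Lambda_p^{1/2}\Lambda_p^{1/2}$, applies $\det^*(AB)=\det^*(BA)$ directly, and observes that $\Lambda_p^{1/2}(WU_p)^\top(WU_p)\Lambda_p^{1/2}$ is a full-rank $p\times p$ matrix so its pseudo determinant is an ordinary determinant. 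The two routes are equivalent in substance; yours isolates the full-column-rank verification slightly more explicitly (through the invertibility of $R$), while the paper's avoids introducing an auxiliary factorization at the cost of invoking $\det^*(AB)=\det^*(BA)$ as a standing identity for pseudo determinants. Either way, the only genuine point of care — that $WU_p$ has full column rank $p$, which makes the reduction to a $p\times p$ determinant legitimate — is handled correctly in both.
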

Given Lemma \ref{lem:rank_P_L}, the assumptions of Lemma \ref{lem:pseudodet_gen_PLPt} are satisfied with $Z=(L_{C_t}\otimes I_d)$, $W=P$, $p=(t-1)d$, $q=td$, $q'=i_1d$. Consequently, by these lemmas, we get 
\begin{align*}
    \operatorname{det}^*\left(P(L_{C_t}\otimes I_d) P^\top\right)&=\operatorname{det}^*(L_{C_t}\otimes I_d)\operatorname{det}\left((PU_{(t-1)d})^\top(P U_{(t-1)d})\right)\\
    &={\operatorname{det}^*(L_{C_t})}^d\operatorname{det}\left((PU_{(t-1)d})^\top(P U_{(t-1)d})\right)
\end{align*}
By the Kirchoff's matrix tree theorem \cite[Lemma 13.2.4]{godsil2001algebraic}, we know that 
\[
\frac{1}{t}\operatorname{det}^*(L_{C_t})=|\{\text{spanning trees in }C_t\}|.
\]
Since the number of spanning trees in the $t$-cycle graph is equal to $t$, we obtain
\begin{align*}
   \operatorname{det}^*(L_{C_t})=t^2.
\end{align*}
From this, we obtain, 
\begin{equation} \label{eq:pseudo_det_exp_1}
     \operatorname{det}^* (L) = \operatorname{det}^*\left(P(L_{C_t}\otimes I_d) P^\top\right)=t^{2d}\operatorname{det}\left((PU_{(t-1)d})^\top(P U_{(t-1)d})\right).
\end{equation}
The following lemma helps us bound the right-hand side of the previous expression. Its proof is deferred to Appendix \ref{app:proof_bound_PUtPU}.
\begin{lemma}\label{lem:bound_PUtPU}
    We have 
    \begin{equation*}
        \det\left((PU_{(t-1)d})^\top(PU_{(t-1)d})\right)\geq \frac{\det\left(P^\top P\right)}{\prod^d_{k=1}\lambda_k\left(P^\top P\right)}.
    \end{equation*}
\end{lemma}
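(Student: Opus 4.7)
The plan is to recognize that Lemma~\ref{lem:bound_PUtPU} is a purely linear-algebraic statement that follows from the \emph{Poincar\'e separation theorem} applied to the compression of $M := P^\top P$ by the orthonormal columns of $U_{(t-1)d}$. All of the VAR-specific structure has by this point already been absorbed into Lemmas~\ref{lem:pseudodet_gen_PLPt} and \ref{lem:rank_P_L}; what remains is a standard eigenvalue separation estimate.

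First, I would rewrite the left-hand side as
\begin{equation*}
    (PU_{(t-1)d})^\top (PU_{(t-1)d}) = U_{(t-1)d}^\top\, M\, U_{(t-1)d},
\end{equation*}
and note that $U_{(t-1)d} \in \matR^{td \times (t-1)d}$ has orthonormal columns, because it collects the eigenvectors of the symmetric matrix $L_{C_t}\otimes I_d$ corresponding to its non-zero eigenvalues (which can be chosen orthonormal by the spectral theorem). Thus $U_{(t-1)d}^\top M\, U_{(t-1)d}$ is the compression of the symmetric positive definite matrix $M$ to a $(t-1)d$-dimensional subspace of $\matR^{td}$; positive definiteness is guaranteed by the fact that $\rank(P) = td$, established in Lemma~\ref{lem:rank_P_L}.

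Next, I would invoke the Poincar\'e separation theorem: letting $\mu_1 \geq \mu_2 \geq \cdots \geq \mu_{(t-1)d}$ denote the eigenvalues of $U_{(t-1)d}^\top M\, U_{(t-1)d}$ and $\lambda_1(M) \geq \cdots \geq \lambda_{td}(M)$ those of $M$, we have
\begin{equation*}
    \lambda_i(M) \,\geq\, \mu_i \,\geq\, \lambda_{i+d}(M), \qquad i = 1, \ldots, (t-1)d.
\end{equation*}
Taking the product of the lower bounds over $i$ and using that $M = P^\top P$ is positive definite (so $\det(M) = \prod_{j=1}^{td}\lambda_j(M)$), I obtain
\begin{equation*}
    \det\!\left((PU_{(t-1)d})^\top(PU_{(t-1)d})\right) = \prod_{i=1}^{(t-1)d}\mu_i \,\geq\, \prod_{j=d+1}^{td}\lambda_j(M) = \frac{\det(P^\top P)}{\prod_{k=1}^{d}\lambda_k(P^\top P)},
\end{equation*}
which is exactly the asserted bound.

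I do not foresee a real obstacle here: the only points requiring care are (i) verifying that $U_{(t-1)d}$ has orthonormal columns (automatic from the spectral theorem, since $L_{C_t}\otimes I_d$ is symmetric), and (ii) using Lemma~\ref{lem:rank_P_L} to guarantee that $P^\top P$ is invertible so that dividing by $\prod_{k=1}^d \lambda_k(P^\top P)$ makes sense and equals $\prod_{j=d+1}^{td}\lambda_j(P^\top P)$. Poincar\'e separation is completely classical, so no technical subtleties arise beyond stating it correctly.
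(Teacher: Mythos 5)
Your proof is correct and uses essentially the same argument as the paper. The paper invokes a generalized Ostrowski inequality for rectangular congruences $U^\top M U$, which factors each eigenvalue of the compression as $\eta_k\mu_k$ with $\eta_k$ sandwiched by the extreme eigenvalues of $U^\top U$, and then observes that since $U_{(t-1)d}$ has orthonormal columns $\eta_k=1$; at that point the bound reduces exactly to the Poincar\'e separation theorem you cite directly. So the two routes coincide after the paper's first step -- you simply skip to the specialization. The minor care points you flag (orthonormality of $U_{(t-1)d}$ from the spectral theorem, and positive definiteness of $P^\top P$ via $\rank(P)=td$ from Lemma~\ref{lem:rank_P_L}) are indeed what makes the division on the right-hand side legitimate, and the reindexing $\prod_{i=1}^{(t-1)d}\lambda_{i+d}(P^\top P)=\prod_{j=d+1}^{td}\lambda_j(P^\top P)$ matches the paper's.
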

By the previous lemma, in order to obtain a lower bound on $\operatorname{det}^*(PL P^\top)$, there are two ingredients left: a lower bound on $\det(P^\top P)$, and an upper bound on $\prod^d_{k=1}\lambda_k(P^\top P)$. For the latter, we will use that
\[
\prod^d_{k=1}\lambda_k(P^\top P)\leq {\lambda_1(P^\top P)}^d,
\]
for which an upper bound on $\lambda_1(P^\top P)$ suffices.
\paragraph{Determinant of $P^\top P$.} The following lemma, whose proof is given in Appendix \ref{app:proof_lem:det_PtP}, provides a formula of the explicit value of $\det\left(P^\top P\right)$, which could be of independent interest. The proof relies on decomposing the matrix $P^\top P$ into the product of square block lower triangular matrices and a Gram matrix whose determinant is straightforward to compute.
%
%
%
\begin{lemma}\label{lem:det_PtP}
   For any cycle $C_t=(i_1,\ldots,i_t)$, where $t\geq 2$, it holds for $P=\begin{bmatrix}
        P_{i_1}&P_{i_2}&\ldots&P_{i_t}
    \end{bmatrix}$ that
    \begin{equation*}
        \det\left(P^\top P\right)
        = \left(\prod_{k=1}^{t-1}\det\Big(\sum^{i_k-i_{k-1}-1}_{l=0}((A^{*})^{l})^\top (A^*)^l\Big) \right)\left(\det\Big(\sum^{i_t-1}_{l=0} ((A^*)^{l})^\top (A^*)^l \Big) \right) \geq 1.
    \end{equation*}
\end{lemma}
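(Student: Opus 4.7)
The plan is to compute $\det(P^\top P)$ by applying a block-column operation on $P$ that decouples the columns into ones with pairwise disjoint block-row supports. The resulting Gram matrix is then block diagonal, its diagonal blocks are Gramians of consecutive tails of powers of $A^*$ whose determinants are easy to read off, and the change of basis is unimodular so the determinant is preserved.

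Concretely, I would assume w.l.o.g.\ that the cycle indices are relabeled so that $i_1 > i_2 > \cdots > i_t$ (matching the convention of Section~\ref{subsec:proof_prop_det_bound}) and set $i_{t+1} := 0$. The key observation is the telescoping identity
$$
P_{i_{r+1}}\,(A^*)^{i_r-i_{r+1}} \;=\; \bigl(\text{top } i_{r+1} \text{ block rows of } P_{i_r}\bigr),
$$
which follows from a direct block-by-block calculation: the entry at block-row $k \leq i_{r+1}$ of the left-hand side equals $(A^*)^{i_{r+1}-k}\cdot (A^*)^{i_r-i_{r+1}} = (A^*)^{i_r-k}$, which is precisely the $k$-th block of $P_{i_r}$. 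I would then define
$$
\tilde P_{i_r} := P_{i_r} - P_{i_{r+1}}(A^*)^{i_r - i_{r+1}} \quad (r=1,\dots,t-1), \qquad \tilde P_{i_t} := P_{i_t},
$$
so that each $\tilde P_{i_r}$ has non-zero blocks confined to rows $i_{r+1}+1,\dots,i_r$, and these supports form a partition of $\{1,\dots,i_1\}$.

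This operation is encoded by $\tilde P = P M$, where $M$ is the block upper-bidiagonal $(td)\times(td)$ matrix with $I_d$ on the diagonal and $-(A^*)^{i_r-i_{r+1}}$ in position $(r,r+1)$; since $\det M = 1$, one has $\det(\tilde P^\top \tilde P) = \det(P^\top P)$. The disjoint-support property forces $\tilde P^\top \tilde P$ to be block diagonal, with $r$-th diagonal block
$$
\sum_{k=i_{r+1}+1}^{i_r}\bigl((A^*)^{i_r - k}\bigr)^\top (A^*)^{i_r - k}
\;=\; \sum_{l=0}^{i_r-i_{r+1}-1}\bigl((A^*)^l\bigr)^\top (A^*)^l,
$$
after the substitution $l = i_r - k$. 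Multiplying the $t$ diagonal determinants yields the claimed factorization (the $r=t$ factor uses $i_{t+1}=0$ and contributes $\det\sum_{l=0}^{i_t-1}((A^*)^l)^\top (A^*)^l$, exactly the isolated factor in the statement).

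For the inequality $\det(P^\top P) \ge 1$: each diagonal block $G_m := \sum_{l=0}^{m-1}((A^*)^l)^\top (A^*)^l$ is a sum of PSD matrices whose $l=0$ term equals $I_d$, so $G_m \succeq I_d$ and $\det G_m \ge 1$. Since the cycle indices are distinct, every relevant $m \ge 1$, so the overall product is $\ge 1$. The entire argument is essentially bookkeeping once the column operation is in hand; the only real piece of insight is spotting the telescoping identity above, and the main (small) obstacle is cleanly verifying that the block-row supports of the $\tilde P_{i_r}$ partition $\{1,\dots,i_1\}$, which follows immediately from the explicit formulas once written out.
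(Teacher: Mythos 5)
Your proof is correct and essentially equivalent to the paper's: both reduce $P$ to a block matrix with pairwise-disjoint block-row supports (the paper calls it $V$, you call it $\tilde P$) via a unimodular block-column transformation, after which $\tilde P^\top \tilde P$ is block diagonal and the determinant factors. The only difference is direction of presentation: the paper writes $P = V T$ with $T$ the full unipotent block-lower-triangular matrix whose $(l,m)$ block (for $l>m$) is $(A^*)^{i_m - i_l}$, whereas you write $\tilde P = P M$ with $M = T^{-1}$, exploiting the telescoping identity to make $M$ bidiagonal. Your version avoids having to verify the full triangular factorization explicitly, which is a nice simplification. One small indexing slip: with $\tilde P_{i_r} = P_{i_r} - P_{i_{r+1}}(A^*)^{i_r - i_{r+1}}$, the factor $-(A^*)^{i_r - i_{r+1}}$ should sit in block position $(r+1, r)$ of $M$ (lower bidiagonal), not $(r, r+1)$; the determinant argument is of course unaffected.
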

From the previous lemma, we obtain $\det(P^\top P) \geq 1.$ Although simple, this bound is already nontrivial from the definition of $P$. 
Moreover, it allows us to identify the cases of equality: indeed, the bound is tight 
when $\|A^*\|_2 = 0$ or when the cycle is $C_t = (t, t-1, \ldots, 2, 1)$.

\paragraph{Upper bound on $\lambda_1\left(P^\top P\right)$.} We will use a generalization of Gershogorin's theorem for block matrices (see \cite[Theorem 1.13.1]{Tretter2008}) to bound the largest eigenvalue of $P^\top P$. The proof of the next result is deferred to Appendix \ref{app:proof_gerschgorin_PtP}.
\begin{lemma}\label{lem:gerschgorin_PtP}
    For $P$ defined as above, it holds 
    \[
    \lambda_1\left(P^\top P\right)\leq \frac1{(1-\|A^*\|_2)^2}.
    \]
\end{lemma}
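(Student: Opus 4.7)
\medskip

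\noindent\textbf{Proof plan for Lemma \ref{lem:gerschgorin_PtP}.} The strategy is to view $P^\top P$ as a $t\times t$ block matrix with $d\times d$ blocks, compute each block explicitly in terms of powers of $A^*$, and then apply the block Gerschgorin theorem from \cite{Tretter2008} to bound its largest eigenvalue.

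First, I would write out the $(k,l)$-th block of $P^\top P$. From the definition of $P_r$, for $i_k\leq i_l$ one has
\begin{equation*}
(P^\top P)_{kl} \;=\; P_{i_k}^\top P_{i_l} \;=\; \sum_{j=1}^{i_k}\bigl((A^*)^{i_k-j}\bigr)^\top (A^*)^{i_l-j} \;=\; \Bigl(\sum_{u=0}^{i_k-1}\bigl((A^*)^u\bigr)^\top (A^*)^u\Bigr)(A^*)^{i_l-i_k},
\end{equation*}
and the symmetric formula holds when $i_l<i_k$. In particular, the diagonal blocks $(P^\top P)_{kk}=\sum_{u=0}^{i_k-1}((A^*)^u)^\top (A^*)^u$ are positive semidefinite Lyapunov-type sums.

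Next, I would derive the elementary spectral-norm bounds
\begin{equation*}
\|(P^\top P)_{kk}\|_2 \;\leq\; \sum_{u=0}^{i_k-1}\|A^*\|_2^{2u} \;\leq\; \frac{1}{1-\|A^*\|_2^2}, \qquad
\|(P^\top P)_{kl}\|_2 \;\leq\; \frac{\|A^*\|_2^{|i_k-i_l|}}{1-\|A^*\|_2^2},
\end{equation*}
using sub-multiplicativity of the spectral norm together with $\|A^*\|_2<1$. Because the indices $i_1,\ldots,i_t$ are pairwise distinct, for any fixed $k$ the multiset $\{|i_l-i_k| : l\neq k\}$ consists of distinct positive integers with at most two indices at each distance; hence
\begin{equation*}
\sum_{l\neq k}\|A^*\|_2^{|i_l-i_k|} \;\leq\; 2\sum_{m=1}^{\infty}\|A^*\|_2^m \;=\; \frac{2\|A^*\|_2}{1-\|A^*\|_2}.
\end{equation*}

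Finally, I would invoke the block Gerschgorin theorem \cite[Theorem 1.13.1]{Tretter2008}, which guarantees that every eigenvalue of $P^\top P$ lies within spectral distance $\sum_{l\neq k}\|(P^\top P)_{kl}\|_2$ of some eigenvalue of some diagonal block $(P^\top P)_{kk}$. Combined with the bounds above, this yields
\begin{equation*}
\lambda_1(P^\top P) \;\leq\; \frac{1}{1-\|A^*\|_2^2} + \frac{1}{1-\|A^*\|_2^2}\cdot\frac{2\|A^*\|_2}{1-\|A^*\|_2} \;=\; \frac{1}{(1-\|A^*\|_2)(1+\|A^*\|_2)}\cdot\frac{1+\|A^*\|_2}{1-\|A^*\|_2} \;=\; \frac{1}{(1-\|A^*\|_2)^2},
\end{equation*}
which is the claim. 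The only mildly delicate step is checking that the right version of block Gerschgorin applies here (diagonal blocks are symmetric, off-diagonal blocks are general $d\times d$ matrices), but the required version holds because $P^\top P$ is symmetric p.s.d.; the rest is straightforward algebra.
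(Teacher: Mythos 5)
Your proposal is correct and follows essentially the same route as the paper: write $P^\top P$ as a $t\times t$ block matrix, bound diagonal blocks by $1/(1-\|A^*\|_2^2)$ and off-diagonal blocks by $\|A^*\|_2^{|i_k-i_l|}/(1-\|A^*\|_2^2)$, then invoke \cite[Theorem 1.13.1]{Tretter2008} and simplify. The paper handles the off-diagonal sum by splitting into the two cases $i_{k'}<i_k$ and $i_{k'}>i_k$, whereas you observe directly that at most two indices sit at each distance; this is a purely cosmetic difference and the resulting bounds and final algebra are identical.
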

\paragraph{Putting it together.} 
From \eqref{eq:pseudo_det_exp_1}, and Lemmas \ref{lem:bound_PUtPU}, \ref{lem:det_PtP} and \ref{lem:gerschgorin_PtP} we deduce 
\begin{equation*}
    {\det}^*(L)\geq t^{2d}(1-\|A^*\|_2)^{2d}, 
\end{equation*}
which together with \eqref{eq:det_bound_2} gives
\begin{align}
     \log\det\left(\frac{(1-\|A^*\|_2)^3}{4\sigma^2}L+I_{i_1d}\right)^{-\frac12}&\leq -(t-1)\frac d2\log{\left(\frac{(1-\|A^*\|_2)^3}{4\sigma^2} \left(t^{2d}(1-\|A^*\|_2)^{2d}\right)^{\frac1{(t-1)d}}+1\right)}\nonumber\\
     &\leq -(t-1)\frac d2\log{\left(\frac{(1-\|A^*\|_2)^5}{4\sigma^2} +1\right)}.\nonumber
\end{align}
In the last line we used that $t^{\frac{2}{t-1}}\geq 1$ for $t\geq 2$, and that $(1-\|A^*\|_2)^{\frac2{t-1}}\geq (1-\|A^*\|_2)^2$, for $t\geq2$. From this, Proposition \ref{prop:det_bound} follows. 
\begin{remark}[About the assumption $i_1>i_2>\ldots>i_t$]\label{rem:assumption_cycle}
    In the proof of Proposition~\ref{prop:det_bound}, we assumed \( i_1 > i_2 > \ldots > i_t \) for convenience. More generally, for any \( t \)-cycle \( C_t = (i_1, i_2, \ldots, i_t) \) with \( i_1 \) being the largest element, we always have \( (\alpha_1, \beta_1) = (i_1, i_2) \) and \( (\alpha_t, \beta_t) = (i_1, i_t) \). This allows us to consistently orient the edges \( i_1 \rightarrow i_2 \) and \( i_1 \rightarrow i_t \). The orientation of the remaining edges—namely, \( \{i_2, i_3\}, \{i_3, i_4\}, \ldots, \{i_{t-1}, i_t\} \)—depends on the relative ordering of \( i_2, i_3, \ldots, i_t \). For the purposes of the analysis, this implies that we can express \( M = P(D \Pi \otimes I_d) \), where \( \Pi \) is a permutation matrix. Clearly, the matrix \( L = M M^\top \) remains unchanged compared to the case where \( i_1 > i_2 > \ldots > i_t \).
\end{remark}
%
%
\section{Algorithms for solving the MLE}\label{sec:algos} 
%
%
Given the hardness of solving the joint optimization problem \eqref{eq:MLE_2_sigma_known_mat}, we now describe an alternating minimization based heuristic for solving \eqref{eq:MLE_2_sigma_known_mat} consisting of the following two main steps, which will be iteratively applied.
\begin{itemize}
    \item \textbf{Step 1: Estimation of $A^*$ for fixed $\Pi$}. For a fixed $\Pi$, we can estimate $A^*$ by solving \eqref{eq:MLE_A_fixed_pi_known_sigma}. This has a closed form solution outlined in Lemma \ref{lem:MLE_A_pi_fixed_sigma_known}.
    
    \item \textbf{Step 2: Estimation of $\Pi^*$ for fixed $A$}. For a fixed \( A \), we consider the problem \eqref{eq:MLE_pi_fixed_A_known_sigma}. Since the objective function is convex, we propose several relaxations of the permutation constraints, resulting in convex optimization problems. 

\end{itemize}
We start by introducing algorithms for estimating $\Pi^*$, for a fixed $A$, in Section \ref{sec:algos_pi_fixed_A}. Later, in Section \ref{sec:algos_iter} we introduce an iterative algorithm based on alternating optimization. Recall that we assume $\sigma$ is known.
%
%
\subsection{Relaxed MLE strategy for estimating \texorpdfstring{$\Pi^*$}{Pi} given \texorpdfstring{$A$}{A}}\label{sec:algos_pi_fixed_A}
Recall the estimator for estimating $\Pi^*$, for a fixed $A$, introduced in \eqref{eq:MLE_pi_fixed_A_known_sigma} -- the MLE
\begin{equation*}
    \est{\Pi}_{\operatorname{MLE}}(A) \in \argmin{\Pi\in\calP_T}\,\|X^\#\Pi-AX^\#\Pi S-(X-AXS)\|^2_F,
\end{equation*}
where, $S$ denotes the shift operator introduced in Section~\ref{sec:MLE_pi}. 
Since a general quadratic program with permutaion constraints is hard in the worse case, it is not clear whether $\hat{\Pi}_{\operatorname{MLE}}$ can be found efficiently. For this reason, we study a two-step strategy: (1) first solving a \emph{relaxed MLE} problem, and (2) then rounding the relaxed solution to a valid permutation. 
\paragraph{Relaxed MLE.}The objective in \eqref{eq:MLE_pi_fixed_A_known_sigma} is convex (the square norm of a linear function in $\Pi$), but the set of constraints is discrete. We will consider a convex set $\calK\subseteq \matR^{T\times T}$, containing the set of permutations matrices, to obtain the relaxed convex optimization problem 
\begin{equation}\label{eq:relaxed_general}
    \min_{\Pi\in \calK}\,\underbrace{\|X^\#\Pi-AX^\#\Pi S-(X-AXS)\|^2_F}_{=:f(\Pi;X,\hX,A,S)}.
\end{equation}
In what follows, we will consider specific relaxations induced by particular choices of ~$\calK$. Although these relaxations can be solved using general-purpose convex optimization methods (e.g., interior point methods), for the sake of efficiency we will implement gradient-based algorithms in the numerical experiments (see Section \ref{sec:experiments} for details).
\begin{enumerate}
%
    \item \textbf{Hyperplane}. This corresponds to the choice (where $\mathbbm{1}$ denotes the all ones vector) 
    $$\calK=\{Z\in \matR^{T\times T}:\mathbbm{1}^\top Z\mathbbm{1}=T\},$$ which is  a hyperplane constraint in the space $\matR^{T^2}$ (with the obvious identification of a matrix with a vector). The main motivation for this relaxation comes from its success in the graph matching problem, as studied in \cite{fan2022spectralI} (with an additional regularization term) where performance guarantees were obtained under specific planted matching models. 
    %
    \item \textbf{Simplex}. A tighter relaxation than the hyperplane constraint is given by the simplex 
$$\mathcal{K} = \{Z \in \mathbb{R}^{T \times T} : \mathbbm{1}^\top Z \mathbbm{1} = T,\; Z \geq 0\}.$$ The addition of positivity constraints, in comparison to the simplex relaxation, has proven beneficial in the context of graph matching, as demonstrated recently in \cite{ArayaTyagi_GM_fods}. There, this relaxation was shown to outperform the hyperplane formulation experimentally. 
\item \textbf{Birkhoff polytope.} The tightest convex relaxation of \eqref{eq:MLE_pi_fixed_A_known_sigma} is given by the Birkhoff polytope (the set of doubly stochastic matrices) $$\calK=\{Z\in\matR^{T\times T}: \mathbbm{1}^\top Z=\mathbbm{1}^\top, Z\mathbbm{1}=\mathbbm{1},Z\geq 0\}.$$ Indeed, by the Birkhoff–von Neumann theorem, the Birkhoff polytope is precisely the convex hull of the set of permutation matrices. 
\end{enumerate}
%
%
%

%
%
\paragraph{Rounding procedure.}The solutions of the relaxed problem~\eqref{eq:relaxed_general} are, in general, not guaranteed to be permutation matrices. Therefore, an additional rounding step is required. This can be achieved by solving a linear assignment problem using the relaxed solution (denoted by~$\est{\Pi}_{\operatorname{rel}}$) as the cost matrix; that is, we solve
\begin{equation}\label{eq:LA_rounding}
    \max_{\Pi\in\calP_T}\langle \Pi,\est{\Pi}_{\operatorname{rel}}\rangle_F.
\end{equation}
As previously mentioned, the linear assignment problem can, in general, be solved by the Hungarian algorithm with cubic complexity.
\begin{algorithm}
\caption{Relaxed MLE + LA rounding (\texttt{RelaxMLE-Round})}
\label{alg:relaxMLE_round}
\begin{algorithmic}[1]
\Require Time series matrices $X, X^\# \in \matR^{d \times T}$, system matrix $A\in \matR^{d\times d}$, 
a convex set $\calK\supseteq \calP_T$
\State Set $S=\begin{bmatrix}
    0&I_{T-1}\\
    0&0
\end{bmatrix}$.
\State Solve the relaxed MLE problem in \eqref{eq:relaxed_general} \[\est{\Pi}_{\operatorname{rel}}=\argmin{\Pi\in\calK}f(\Pi;X,\hX,A,S).\]
\State Round with linear assignment 
\[
\est{\Pi}=\argmax{\Pi\in\calP_T}\langle \Pi, \est{\Pi}_{\operatorname{rel}}\rangle_F.
\]
\State \Return $\est{\Pi}$
\end{algorithmic}
\end{algorithm}
\begin{remark}[Other relaxations]
        We choose to present Algorithm~\ref{alg:relaxMLE_round} in a general form, which allows, in principle, the use of other convex relaxations. Our implementation in Section~\ref{sec:experiments} will be based on the choices of~$\calK$ discussed above, which are motivated by their success in other quadratic optimization problems over the set of permutations, as well as by the availability of efficient algorithms based on gradient descent, mirror descent, and the alternating direction method of multipliers (ADMM). It is worth noting that in related problems, such as graph matching, non-convex relaxations have also been considered. For example, in the classic work~\cite{umeyama1991least}, relaxing the graph matching problem to the set of orthogonal matrices yields a closed-form solution based on spectral information. In the case of~\eqref{eq:relaxed_general}, however, it is not clear that a closed-form solution exists when~$\calK$ is taken to be the set of orthogonal matrices.

\end{remark}

\subsection{Iterative algorithm for estimating \texorpdfstring{$\Pi^*$}{Pi*}}\label{sec:algos_iter}
To recover $\Pi^*$, we propose an iterative algorithm that alternates between {\bf Steps 1 and 2} discussed at the beginning of this section. In Algorithm~\ref{alg:TS_matching_alternating}, we present the proposed procedure in the general case, where any of the strategies based on convex relaxation for estimating~$\Pi$ given~$A$, as discussed in Section~\ref{sec:algos_pi_fixed_A}, can be used. We choose to write it in this abstract form, using \texttt{RelaxMLE-Round}, which serves as a subroutine for estimating~$\Pi$ given~$A$.   
\begin{algorithm}
\caption{Alternating minimization method for matching VAR time series}
\label{alg:TS_matching_alternating}
\begin{algorithmic}[1]
\Require Time series matrices $X, X^\# \in \matR^{d \times T}$, noise parameter $\sigma>0$, initial estimate $\Pi^{(0)}\in \calP_T$, max iterations $K$, a convex set $\calK\supseteq \calP_T$. 
\State Set $S=\begin{bmatrix}
    0&I_{T-1}\\
    0&0
\end{bmatrix}$.
\For{$k = 1$ to $K$}
    \State \textbf{A-update:} Set 
    \[
    \begin{aligned}A^{(k)}=&\left[X(XS)^\top+\frac1{\sigma^2}\left(X^\# \Pi^{(k-1)}-X\right)\left(X^\# \Pi^{(k-1)} S-XS\right)^\top\right] \\&\times \left[(XS)(XS)^\top+\frac1{\sigma^2}(X^\# \Pi^{(k-1)} S-XS)(X^\# \Pi^{(k-1)} S-XS)^\top\right]^\dagger
    \end{aligned}
    \]
    \State \textbf{$\Pi$-update:} Use sub-routine $\texttt{RelaxMLE-Round}$ to obtain
    \[
    \Pi^{(k)} := \texttt{RelaxMLE-Round}(X,X^\#,A^{(k)},\calK)
    \]
\EndFor
\State \Return $\Pi^{(K)}$
\end{algorithmic}
\end{algorithm}

\paragraph{Initialization.}Algorithm~\ref{alg:TS_matching_alternating} requires an initial estimate $\Pi^{(0)}$ of the optimal matching. Several strategies can be used for this initialization. One option is to set the initial permutation to the linear assignment estimator, i.e., $\Pi^{(0)} = \widehat{\Pi}_{\operatorname{LA}}$. In this case, Algorithm~\ref{alg:TS_matching_alternating} can be seen as an iterative refinement of the linear assignment solution. This raises the natural question of how much improvement the algorithm provides over linear assignment—a question we explore experimentally in Section~\ref{sec:experiments}. Another approach is to initialize randomly, for example by drawing $\Pi^{(0)}$ uniformly from the set $\mathcal{P}_T$, which we also investigate empirically. A central question is the extent to which the initial estimate of the permutation influences the outcome.

\begin{remark}[Another strategy: estimate $A^*$ first]\label{rem:estimate_A_first}
Notice that Algorithm~\ref{alg:TS_matching_alternating} does not require prior knowledge of the system matrix $A^*$, as it is updated iteratively (specifically in line~4). If enough data is available—that is, sufficiently long time series—an alternative strategy is to first estimate the system matrix using only the time series  $X$, and then address the problem of estimating the permutation $\Pi$, without further updating the estimate of $A^*$. The intuition is that the estimate of $A^*$ would not change significantly, since the amount of information about $A^*$ contained in $(X, \widehat{X})$ should be asymptotically similar to that contained in $X$ alone. On the other hand, an interesting question is wether one can consistently estimate the matching (or achieve non-trivial recovery), even if the system matrix cannot be consistently estimated. 
\end{remark}

\begin{remark}[Complexity]\label{rem:complexity}
The computational complexity of the $A$-update step in Algorithm~\ref{alg:TS_matching_alternating} is dominated by matrix multiplications involving matrices of sizes \(d\times d\), \(d\times T\), and \(T\times T\). In the worst case, this yields a complexity of $
O(\max\{d, T\}^{\omega})$, where $\omega \le 3$ (see e.g. \cite{mat_mult}) denotes the matrix multiplication exponent. The complexity of the $\Pi$-update step depends on the specific optimization method employed. For instance, when using a gradient-based method for the hyperplane relaxation with $\log T$ iterations, the cost is approximately $O(\max\{d, T\}^{\omega} \log T)$, as the most expensive operation---gradient computation---also reduces to matrix multiplications. The rounding step incurs an additional $O(T^3)$ cost in the worst case when solved via the Hungarian algorithm \cite{Kuhn1955}. Therefore, the overall computational complexity of the alternating scheme is $
O(\max\{d, T\}^{\omega} K \log T)$, 
where $K$ denotes the number of outer iterations. This complexity can be reduced in practice under structural assumptions. For example, if some of the matrices involved are sparse, matrix multiplications become more efficient. Alternatively, replacing the linear assignment rounding step with a greedy method reduces the rounding cost to $O(T^2)$, see \cite[Algorithm 1]{ArayaTyagi_GM_fods}, for example.

\end{remark}
\section{Numerical experiments}\label{sec:experiments}
%

%
%
In this section, we empirically test the recovery algorithms discussed in Section \ref{sec:algos}, and the linear assignment estimator analyzed in Section \ref{sec:analysis}. We focus on synthetic data generated under the \textbf{CVAR} model, introduced in Section \ref{sec:mle_la_formulate}. In Section \ref{sec:details}, we provide some details about the implementation of the relaxed-MLE strategy (Algorithm \ref{alg:relaxMLE_round}) for different choices of $\calK$. In Section \ref{sec:exp_rel-MLE-knownA}, we test the relaxed MLE algorithms under the assumption that $A^*$ is known. In Section \ref{sec:algos-unknownA}, we assume that $A^*$ is unknown and evaluate the alternating optimization approach of Algorithm \ref{alg:TS_matching_alternating}, as well as the linear assignment (LA) estimator. \rev{The python code for the experiments can be found at \url{https://github.com/ErnestoArayaV/Matching-VAR-time-series}.}

%
\subsection{Algorithmic implementation details}\label{sec:details}
To solve the relaxed MLE problem \eqref{eq:relaxed_general}, we use different approaches depending on the convex set $\calK$ considered. Although this problem could be solved with general purpose convex optimization algorithms, for the sake of efficiency, we focus on first-order methods as described below. 
\begin{itemize}
    \item \textbf{Hyperplane. } In the case $\calK=\{Z\in \matR^{T\times T}:\mathbbm{1}^\top Z\mathbbm{1}=T\}$, we will use a \emph{projected gradient descent} (PGD) strategy to optimize. More specifically, we consider a learning rate $\gamma_k>0$, and an initial vector ${\est{\Pi}_{\operatorname{rel}}}^{(0)}=\frac 1T J_T$, where $J_T$ is the $T\times T$ all-ones matrix. This choice can be considered agnostic, since $J_T$ is at the same distance with respect to all the permutation matrices. Each iteration is described by
    \[
    \est{\Pi}_{\operatorname{rel}}^{(k)}=\mathscr{P}_{\calK}\left(\est{\Pi}_{\operatorname{rel}}^{(k-1)}-\gamma_k \nabla f\left(\est{\Pi}_{\operatorname{rel}}^{(k-1)};X,\hX,A,S\right)\right), \text{ for }k\geq 1,
    \]
    where $f$ is the relaxed MLE objective defined in \eqref{eq:relaxed_general}, $S$ is the shift matrix defined in Section~\ref{sec:mle_la_formulate}, and $\mathscr{P}_{\calK}$ is the Euclidean projection onto $\calK$. We use an adaptive learning rate strategy, given by 
    \begin{equation}\label{eq:learning_rate_pgd}
    \gamma_k=\gamma\frac{\log{(k+1)}}{\left(\left\|\nabla f\left(\est{\Pi}_{\operatorname{rel}}^{(k-1)}\right)\right\|_2\vee 10^{-4}\right)\sqrt{k+1}},
    \end{equation}
    where $\gamma > 0$ is a user-specified constant, and the small term $10^{-4}$ is included arbitrarily to prevent numerical blow-up. This strategy is commonly used in practice; see \cite[Chapter~8]{beck2017first} for this and other related learning rate schemes.
    \item \textbf{Simplex. } When $\mathcal{K} = \{Z \in \mathbb{R}^{T \times T} : \mathbbm{1}^\top Z \mathbbm{1} = T,\; Z \geq 0\}$, we use the \emph{Entropic Mirror Descent} algorithm (see \cite[Chapter 9]{beck2017first}), which results in a multiplicative weights update algorithm. As in the case of the simplex, we use a learning rate $\gamma_k>0$ and ${\est{\Pi}_{\operatorname{rel}}}^{(0)}=\frac 1T J_T$ as the initial point. The iterative step is, for each $k\geq 1$,
    \[
    \est{\Pi}_{\operatorname{rel}}^{(k)}=T\frac{\est{\Pi}_{\operatorname{rel}}^{(k-1)}\odot \exp{\left(-\gamma_k\nabla f\left(\est{\Pi}_{\operatorname{rel}}^{(k-1)};X,\hX,A,S\right)\right)}}{\left\|\est{\Pi}_{\operatorname{rel}}^{(k-1)}\odot \exp{\left(-\gamma_k\nabla f\left(\est{\Pi}_{\operatorname{rel}}^{(k-1)};X,\hX,A,S\right)\right)}\right\|_1},
    \]
    where, for a matrix $A\in \matR^{N\times N}$, $\|A\|_1:=\sum_{i,j\in[N]}|A_{ij}|$. The learning rate is chosen as follows,
    \begin{equation}\label{eq:learning_rate_MD}
        \gamma_k=\gamma\frac{\log{(k+1)}}{\left(\left\|\nabla f\left(\est{\Pi}_{\operatorname{rel}}^{(k-1)}\right)\right\|_\infty\vee 10^{-4}\right)\sqrt{k+1}}.
    \end{equation}
    \item \textbf{Birkhoff Polytope. } Consider $\calK=\{Z\in\matR^{T\times T}: \mathbbm{1}^\top Z=\mathbbm{1}^\top, Z\mathbbm{1}=\mathbbm{1},Z\geq 0\}$. To solve the relaxed MLE problem on the Birkhoff polytope, we employ a projected gradient descent strategy (which we call Birkhoff PGD). We consider the same initialization as the previous algorithms ${\est{\Pi}_{\operatorname{rel}}}^{(0)}=\frac 1T J_T$, and the iterations are of the form 
    \[
    \est{\Pi}_{\operatorname{rel}}^{(k)}=\mathscr{P}_{\calK}\left(\est{\Pi}_{\operatorname{rel}}^{(k-1)}-\gamma_k \nabla f\left(\est{\Pi}_{\operatorname{rel}}^{(k-1)};X,\hX,A,S\right)\right),\, \text{ for }k\geq 1.
    \]
    Similar as before, here $f$ is the relaxed MLE objective defined in \eqref{eq:relaxed_general}, $S$ is the shift matrix defined in Section~\ref{sec:mle_la_formulate}, and $\mathscr{P}_{\calK}$ is the Euclidean projection onto $\calK$ (the Birkhoff polytope). We use the Dykstra algorithm \cite{Dykstra} to approximate this projection. The learning rate follows~\eqref{eq:learning_rate_pgd}, with the constant $\gamma$ 
    adjusted as needed. We refer to this algorithm as Birkhoff PGD.
    %
    
\end{itemize}
\paragraph{Error metrics.} To quantify the success of the proposed methods in the recovery of $\Pi^*\in\calP_T$ we consider the recovery fraction, defined for any matrix $\Pi\in\calP_T$ as 
\[
\operatorname{Recovery}\, \operatorname{fraction}(\Pi):=\left\langle \Pi,\Pi^*\right\rangle_F/T.
\]
Notice that in the previous definition the ground truth $\Pi^*$ is implicit.
Even if our goal is mainly recovery of $\Pi^*$, the alternating optimization method proposed in Algorithm \ref{alg:TS_matching_alternating} allows us to jointly recover $\Pi^*$ and $A^*$. We measure the error for the estimation of $A^*\in \matR^{d\times d}$ with the MSE, defined for a matrix $A\in \matR^{d\times d}$ as follows, 
\[
\operatorname{MSE}(A) := \|A-A^*\|_F^2/d.
\]
\paragraph{Parametric assumption on $A^*$.} 
%
Throughout our experiments, we consider a random $A^*$, constructed as follows. First, we sample a $A'$ with iid standard Gaussian entries. Then we set 
\begin{equation}\label{eq:parametric_A*}
A^*=\theta \frac{A'}{\|A'\|_2}\,,
\end{equation}
where the parameter $\theta\in \matR$ helps to control $\|A^*\|_2$.
%
%

\subsection{Relaxed MLE with known \texorpdfstring{$A^*$}{A*}}\label{sec:exp_rel-MLE-knownA}
We begin by evaluating the performance of the \texttt{RelaxMLE-Round} subroutine, see Algorithm \ref{alg:relaxMLE_round}, when the system matrix $A^*$ is known. We generate time series following the \cvar\ model under different noise levels $\sigma$, with $A^*$ satisfying the parametric assumption in \eqref{eq:parametric_A*}.
To unravel the influence of $\theta$ and $\sigma$, we fix one parameter and vary the other, reporting the recovery accuracy of $\Pi^*$. We choose $\Pi^*=I_T$, and for all methods we initialize with $\est{\Pi}_{\operatorname{rel}}^{(0)}=\frac{1}{T}J_T$. In addition, we consider two contrasting regimes: $(d,T)=(5,50)$, where the ambient dimension is small relative to the number of observations, and $(d,T)=(50,5)$, where the opposite holds. 

Figure~\ref{fig:recovery_vs_scale_A_known} illustrates the effect of the scale parameter $\theta$ on recovery. 
Interestingly, in both regimes considered for $d,T$ the estimation error remains essentially unaffected by $\theta$, suggesting that for these algorithms the problem does not become more difficult at larger scales. It should be noted that, in the case $d=50,\,T=5$, the considered $\sigma$ is higher, since for smaller values of $\sigma$ perfect recovery is achieved for all the algorithms for most random realizations. 
Intuitively, the problem becomes easier in this regime, as discussed in more detail below. In terms of performance, the relaxation to the Birkhoff polytope generally outperforms the simplex and hyperplane relaxations, particularly in high-noise settings (e.g., in the case $d=50,\, T=5$ considered here). Interestingly, the overall differences between the relaxations are not drastic—the simplex relaxation performs comparably to the Birkhoff relaxation across most conditions. One advantage of the hyperplane and simplex relaxations is their comparatively lower computational complexity relative to the Birkhoff relaxation.

In Figure~\ref{fig:recovery_vs_sigma_A_known} we plot the recovery fraction versus the noise level to highlight the effect of $\sigma$ on the recovery level for the same pairs $(d,T)$. This is a complementary plot to Figure \ref{fig:recovery_vs_scale_A_known}, which allows us to read how the recovery decays with the noise level. We obtain similar conclusions: the scale does not seem to affect the recovery fraction and while Birkhoff PGD performs slightly better, the difference in performance remains moderate (expect for high noise scenarios in $d=50\,,T=5$). In addition, we notice that the performance of Birkhoff PGD is very close to the LA estimator. This is expected for small scales, since for $A=0$ solving \eqref{eq:relaxed_general} on the Birkhoff polytope is equivalent to the linear assignment solution in \eqref{eq:LA_pi}. On the other hand, for larger scales it is not obvious from \eqref{eq:relaxed_general} that both approaches would have a similar performance.  

These plots suggest that case $d=50,T=5$ is easier for all algorithms considered, compared to case $d=5,T=50$ (notice that we considered higher levels of noise in case $d=50,T=5$). This is expected since in the case $d=50,T=5$ we have few points in a high dimension, which implies a higher separation between them. More surprising is the fact that the LA estimator performs on par with the best-performing MLE relaxation: the one on the Birkhoff polytope. It seems that the model information, used by the MLE relaxations, does not lead to a noticeable advantage in terms of the matching performance. A natural question is whether LA is optimal in terms of recovery for time series matching, at least in the regime $\|A^*\|_2 < 1$. We explore the case $\|A^*\|_2 \geq 1$ below.

\begin{figure}[htbp]
  \centering
  \setlength{\floatsep}{2pt} 
  \setlength{\textfloatsep}{5pt} 
  \setlength{\intextsep}{5pt} 
  
  \begin{subfigure}[b]{0.45\textwidth}
    \includegraphics[width=0.95\linewidth,height=0.3\textheight,keepaspectratio]{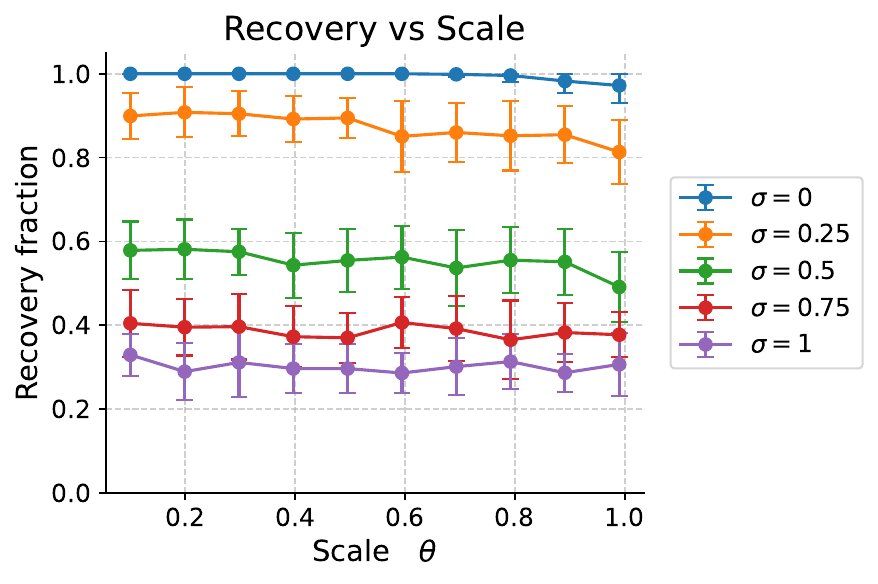}
    \caption{Hyperplane relaxed MLE $d=5,\,T=50$.}
  \end{subfigure}
  \hfill
  \begin{subfigure}[b]{0.45\textwidth}
    \includegraphics[width=0.95\linewidth,height=0.3\textheight,keepaspectratio]{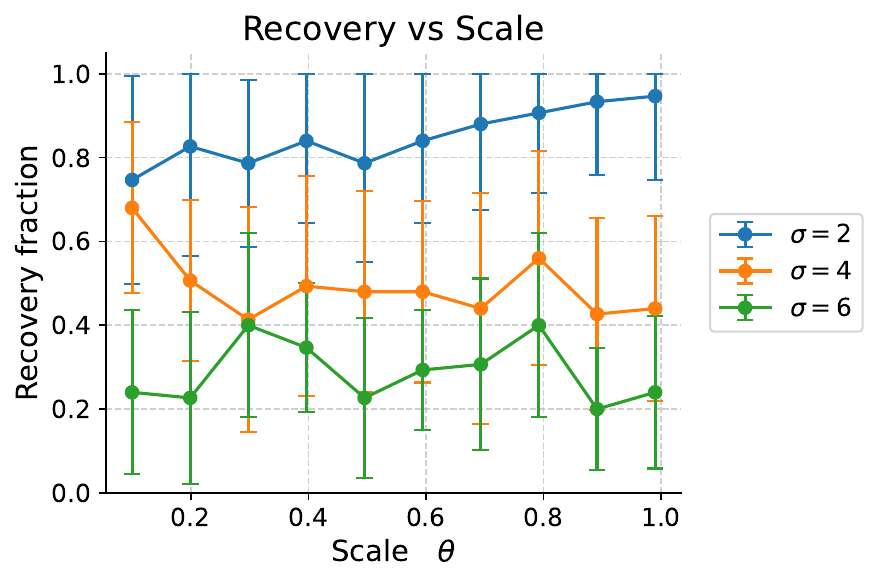}
    \caption{Hyperplane relaxed MLE $d=50,\,T=5$.}
  \end{subfigure}
  
  \begin{subfigure}[b]{0.45\textwidth}
    \includegraphics[width=0.95\linewidth,height=0.3\textheight,keepaspectratio]{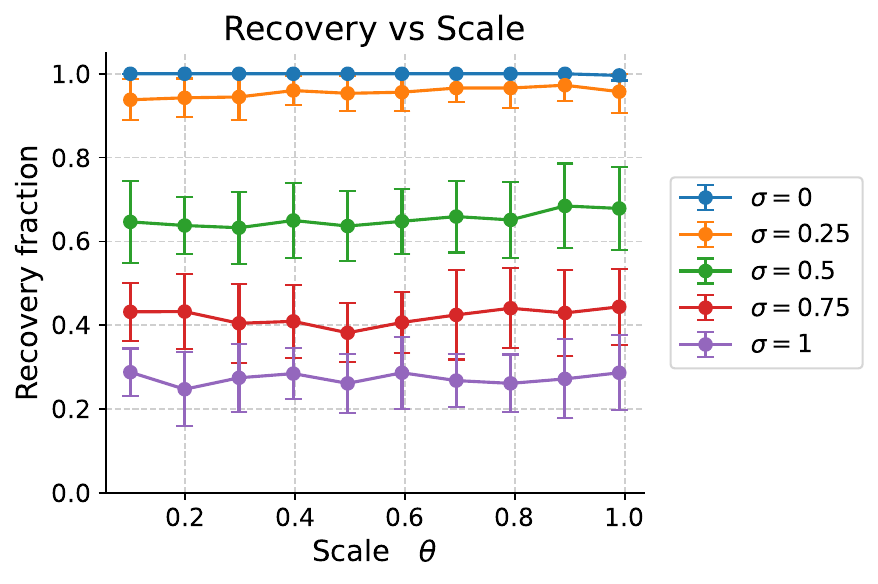}
    \caption{Simplex relaxed MLE $d=5,\,T=50$.}
  \end{subfigure}
  \hfill
  \begin{subfigure}[b]{0.45\textwidth}
    \includegraphics[width=0.95\linewidth,height=0.3\textheight,keepaspectratio]{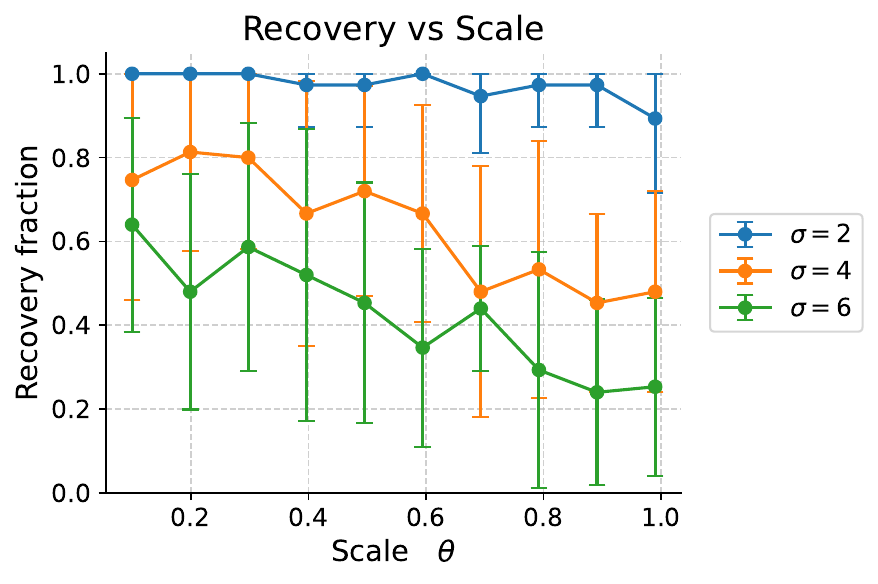}
    \caption{Simplex relaxed MLE $d=50,\,T=5$.}
  \end{subfigure}

  \begin{subfigure}[b]{0.45\textwidth}
    \includegraphics[width=0.95\linewidth,height=0.3\textheight,keepaspectratio]{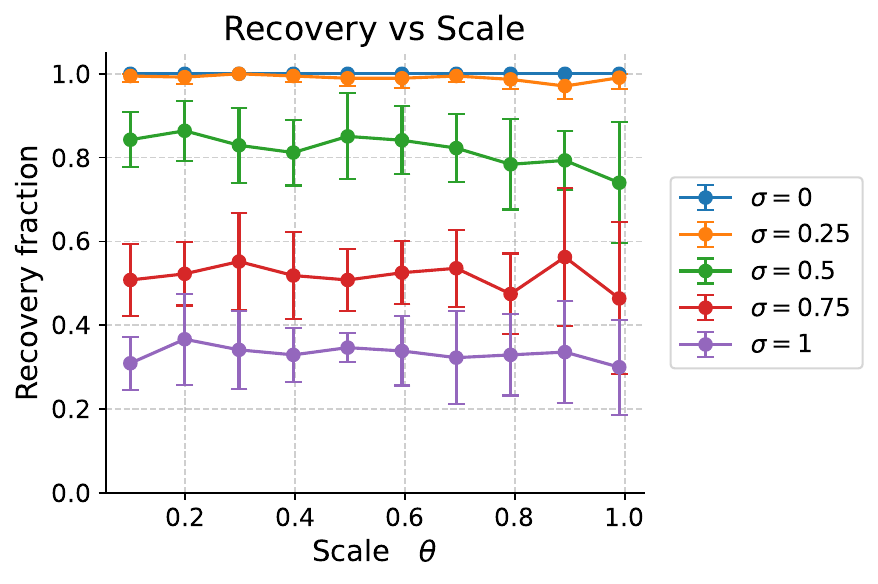}
    \caption{Linear assignment estimator $d=5,\,T=50$.}
  \end{subfigure}
  \hfill
  \begin{subfigure}[b]{0.45\textwidth}
    \includegraphics[width=0.95\linewidth,height=0.3\textheight,keepaspectratio]{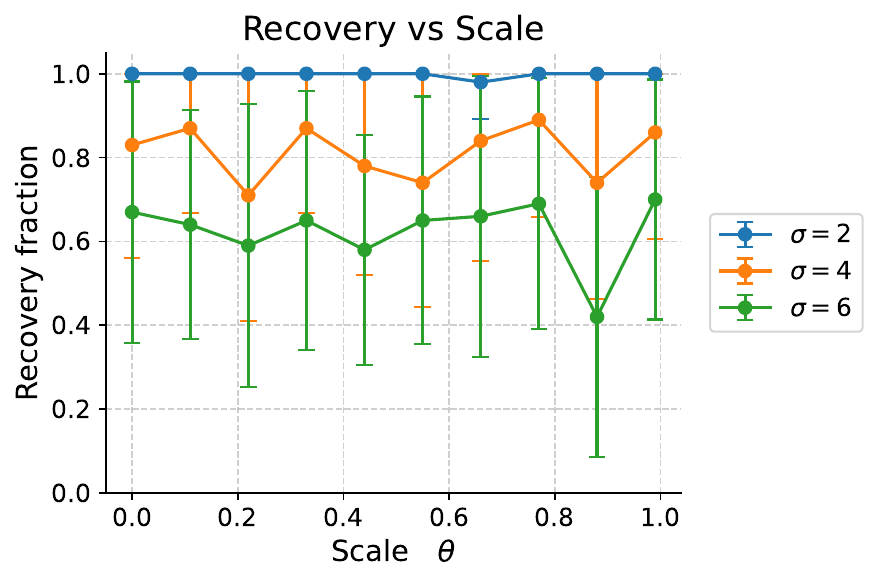}
    \caption{Linear assignment estimator $d=50,\,T=5$.}
  \end{subfigure}

  \begin{subfigure}[b]{0.45\textwidth}
    \includegraphics[width=0.95\linewidth,height=0.3\textheight,keepaspectratio]{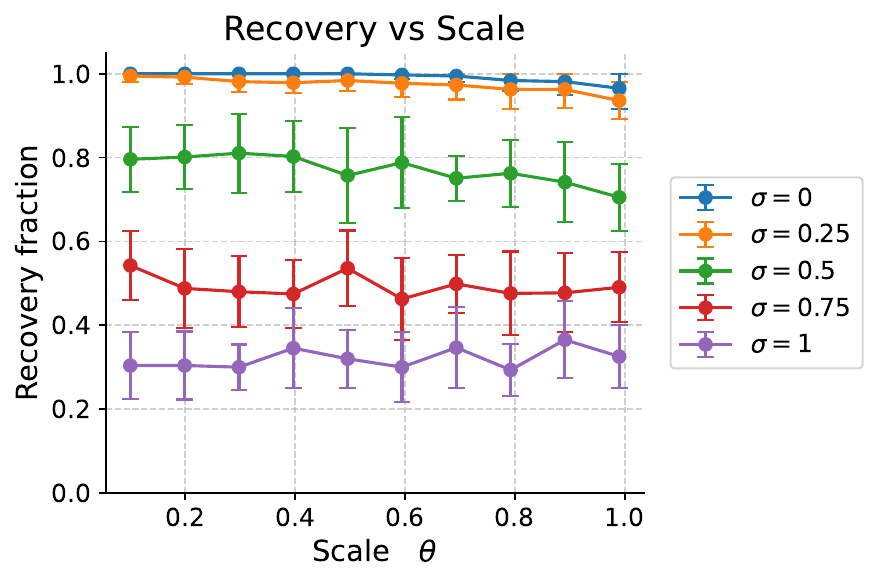}
    \caption{Birkhoff relaxed MLE $d=5,\,T=50$.}
  \end{subfigure}
  \hfill
  \begin{subfigure}[b]{0.45\textwidth}
    \includegraphics[width=0.95\linewidth,height=0.3\textheight,keepaspectratio]{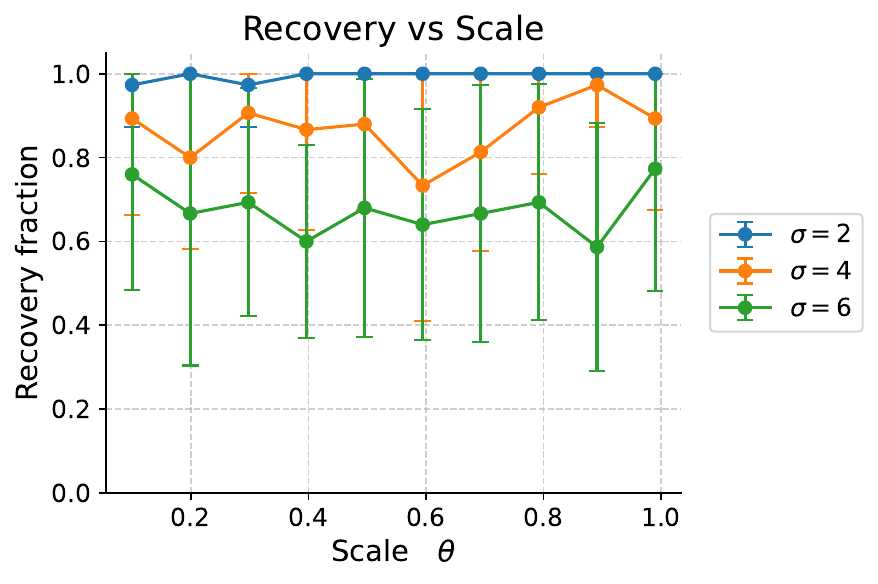}
    \caption{Birkhoff relaxed MLE $d=50,\,T=5$.}
  \end{subfigure}
  \caption{Recovery fraction vs. scale $\theta$ using Algorithm \ref{alg:relaxMLE_round}. We assume known $A^*$ of the form \eqref{eq:parametric_A*}, and consider different values for $\theta$. For each $(\theta,\sigma)$ pair, the plotted value corresponds to the average over $30$ Monte Carlo samples of the \cvar\ model. The error bars reflects one standard deviation above and below the mean.}
  \label{fig:recovery_vs_scale_A_known}
\end{figure}

\begin{figure}[htbp]
  \centering
  \begin{subfigure}[b]{0.45\textwidth}
    \includegraphics[width=0.95\linewidth,height=0.3\textheight,keepaspectratio]{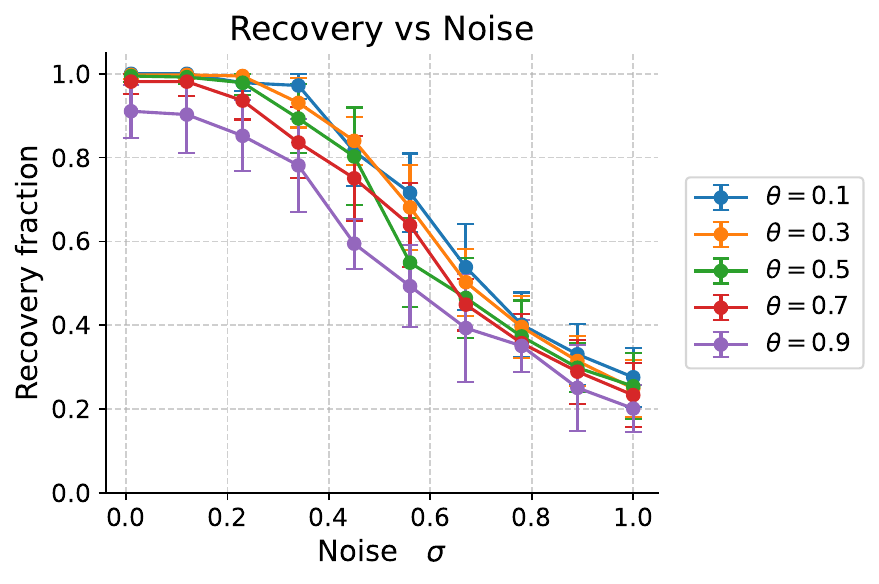}
     \caption{Hyperplane relaxed MLE $d=5,\,T=50$.}
  \end{subfigure}
  \hfill
  \begin{subfigure}[b]{0.45\textwidth}
    \includegraphics[width=0.95\linewidth,height=0.3\textheight,keepaspectratio]{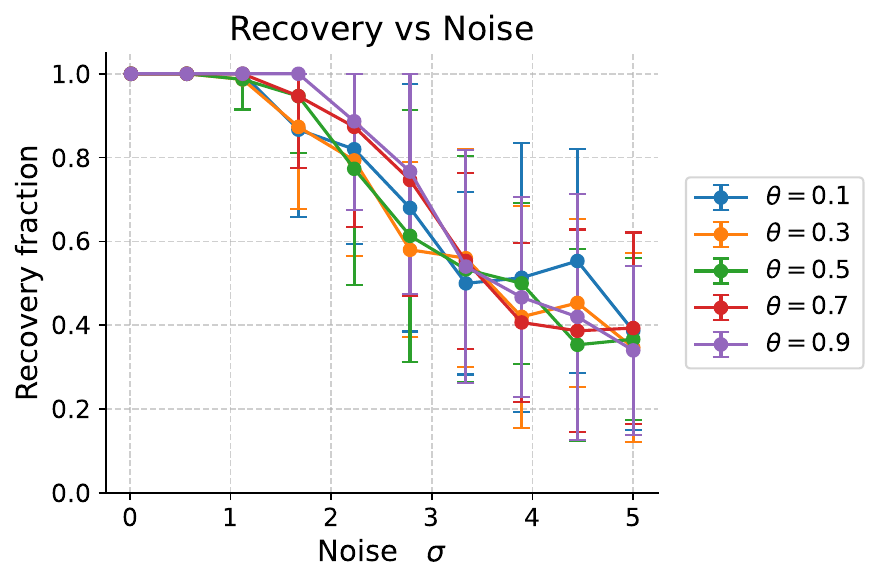}
     \caption{Hyperplane relaxed MLE $d=50,\,T=5$.}
  \end{subfigure}

  \begin{subfigure}[b]{0.45\textwidth}
    \includegraphics[width=0.95\linewidth,height=0.3\textheight,keepaspectratio]{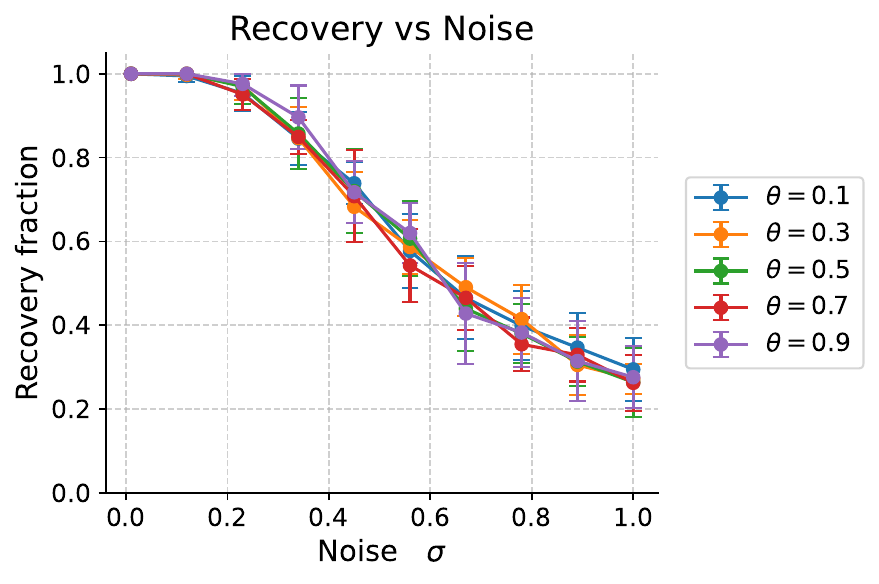}
    \caption{Simplex relaxed MLE $d=5,\,T=50$.}
  \end{subfigure}
  \hfill
  \begin{subfigure}[b]{0.45\textwidth}
    \includegraphics[width=0.95\linewidth,height=0.3\textheight,keepaspectratio]{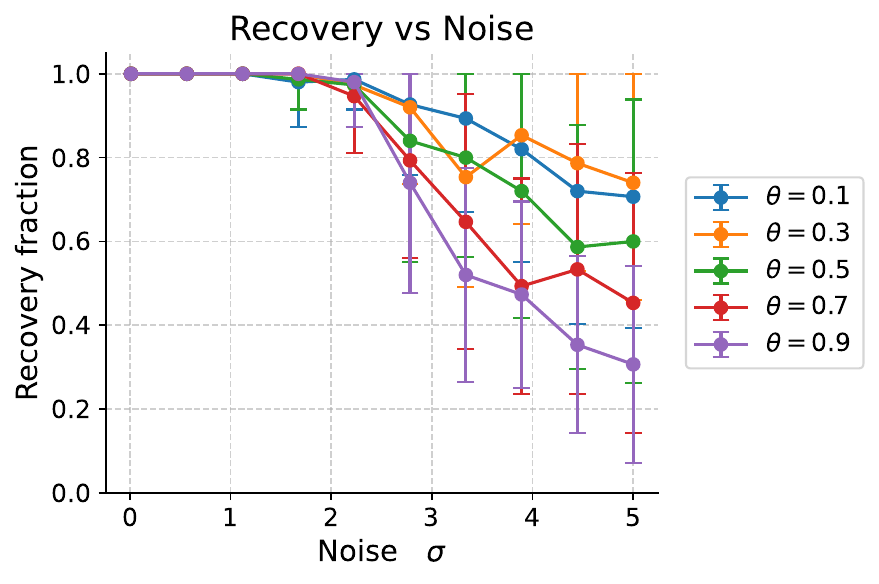}
    \caption{Simplex relaxed MLE $d=50,\,T=5$.}
  \end{subfigure}

  \begin{subfigure}[b]{0.45\textwidth}
    \includegraphics[width=0.95\linewidth,height=0.3\textheight,keepaspectratio]{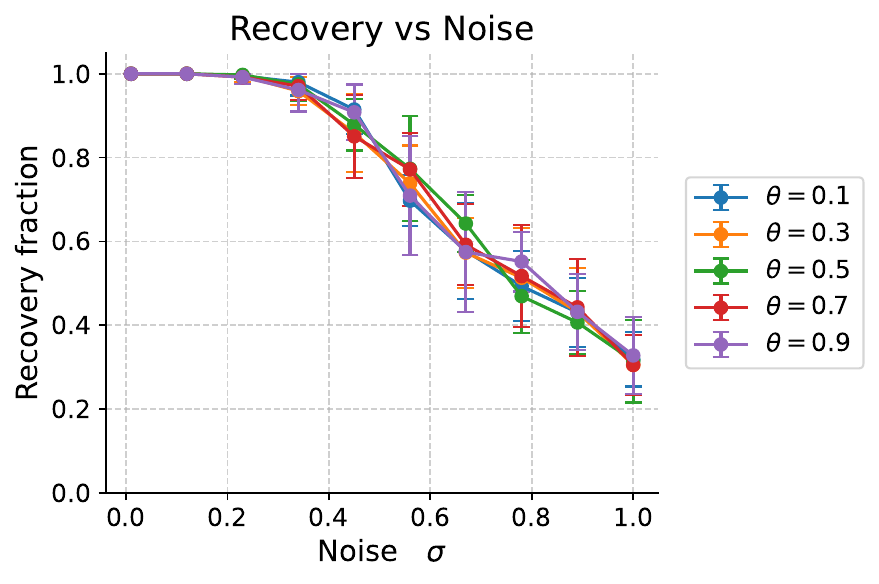}
    \caption{Linear assignment estimator $d=5,\,T=50$.}
  \end{subfigure}
  \hfill
  \begin{subfigure}[b]{0.45\textwidth}
    \includegraphics[width=0.95\linewidth,height=0.3\textheight,keepaspectratio]{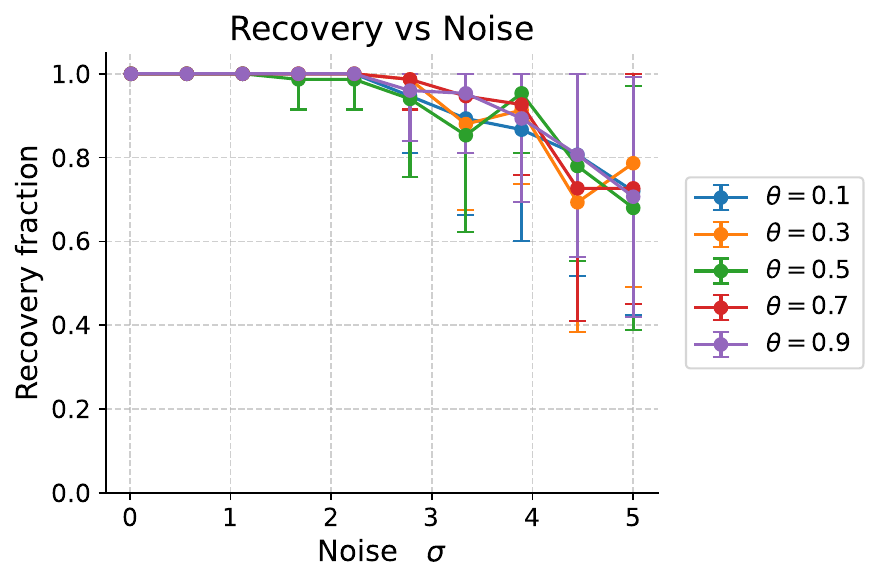}
    \caption{Linear assignment estimator $d=50,\,T=5$.}
    \end{subfigure}

  \begin{subfigure}[b]{0.45\textwidth}
    \includegraphics[width=0.95\linewidth,height=0.3\textheight,keepaspectratio]{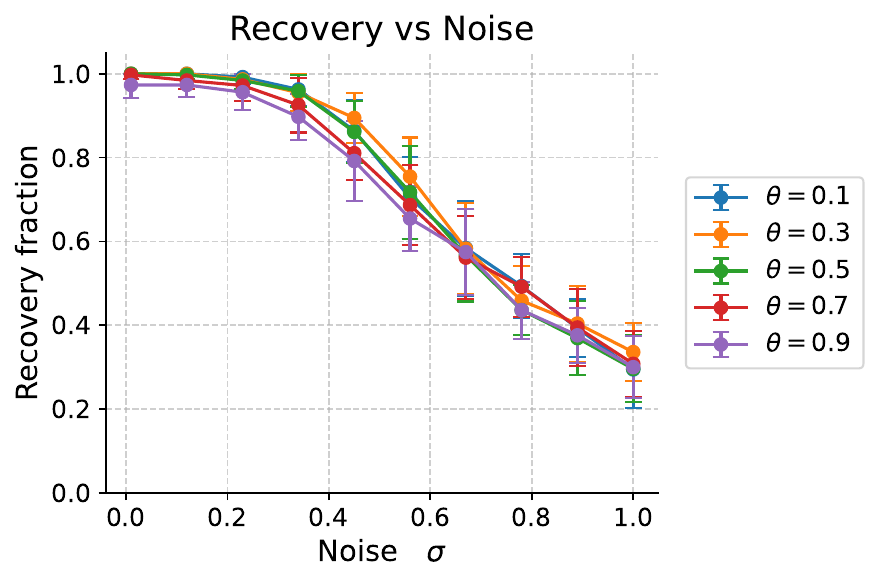}
    \caption{Birkhoff relaxed MLE $d=5,\,T=50$.}
  \end{subfigure}
  \hfill
  \begin{subfigure}[b]{0.45\textwidth}
    \includegraphics[width=0.95\linewidth,height=0.3\textheight,keepaspectratio]{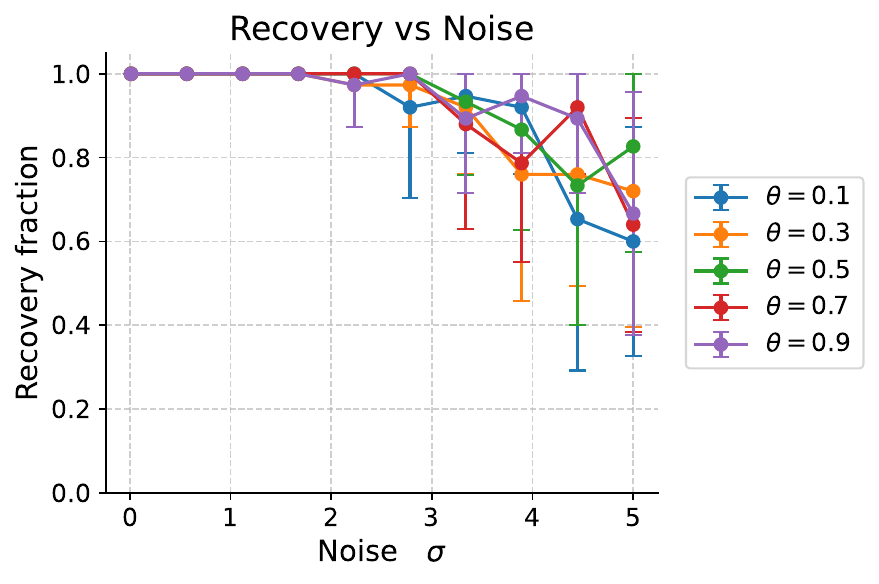}
    \caption{Birkhoff relaxed MLE $d=50,\,T=5$.}
  \end{subfigure}

  \caption{Recovery fraction vs. noise $\sigma$ using Algorithm \ref{alg:relaxMLE_round}. The setting is analogous to Fig.\ref{fig:recovery_vs_scale_A_known}.}
  \label{fig:recovery_vs_sigma_A_known}
\end{figure}

\subsection{Algorithms with unknown \texorpdfstring{$A^*$}{A*}}\label{sec:algos-unknownA}
We now evaluate the proposed alternating optimization procedure in Algorithm \ref{alg:TS_matching_alternating} and the linear assignment estimator in \eqref{eq:LA_pi}, in terms of $\Pi^*$ recovery. In these experiments, we assume that $\sigma$ is known, while $A^*$ remains unknown. We initialize $\Pi^{(0)}$ uniformly at random and use $K=5$ alternating minimization steps. In Appendix \ref{sec:additional_experiments} we include some experiments for the strategy of estimating $A^*$ first, discussed in Remark \ref{rem:estimate_A_first}.
In Figure~\ref{fig:recovery_vs_scale_A_unknown}, we present the recovery fraction as a function of the scale parameter~$\theta$ for $(d,T)\in\{(5,50),(50,5)\}$. As in the case where $A^*$ is known, the recovery fraction remains fairly stable across scales. In this case, all algorithms based on MLE relaxations have a similar performance in the case $d=5,\,T=50$, while the LA estimator slightly outperform them. On the other hand, linear assignment outperforms relaxed MLE-based algorithms in the case $d=50,\,T=5$, achieving perfect recovery for the considered values of $\sigma$. This suggests that random initialization performs poorly for estimating $A^*$, particularly with small $T$. It should be noted that the variance, reflected in the error bars, is relatively high for this choice of parameters $d,T$. Interestingly, for $d=5$ and $T=50$, the Birkhoff relaxation achieves an average recovery performance comparable to linear assignment. This is somewhat surprising, as one might expect that an initial random permutation would adversely affect the $A$-update step in Algorithm \ref{alg:TS_matching_alternating}. Similar conclusions can be obtained from the complementary plots, in Figure~\ref{fig:recovery_vs_sigma_A_unknown}, where the recovery fraction is plotted against the noise level.

\begin{figure}[htbp]
  \centering
  \begin{subfigure}[b]{0.45\textwidth}
    \includegraphics[width=0.95\linewidth,height=0.3\textheight,keepaspectratio]{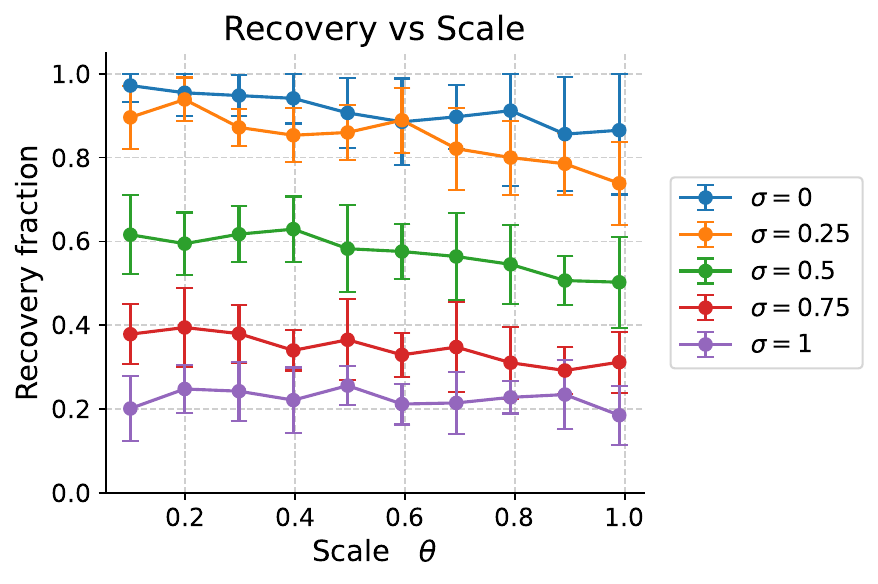}
    \caption{Hyperplane relaxed MLE $d=5,\,T=50$.}
  \end{subfigure}
  \hfill
  \begin{subfigure}[b]{0.45\textwidth}
    \includegraphics[width=0.95\linewidth,height=0.3\textheight,keepaspectratio]{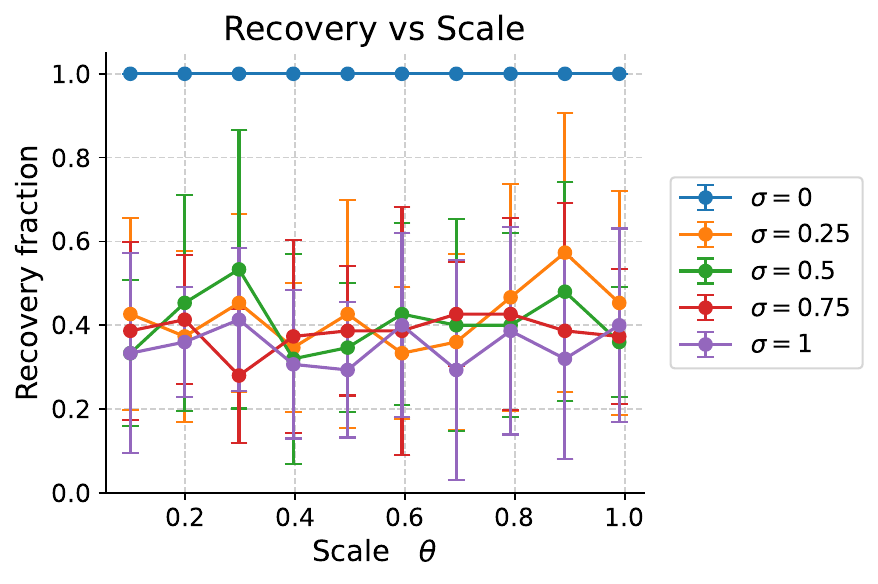}
    \caption{Hyperplane relaxed MLE $d=50,\,T=5$.}
  \end{subfigure}

  \begin{subfigure}[b]{0.45\textwidth}
    \includegraphics[width=0.95\linewidth,height=0.3\textheight,keepaspectratio]{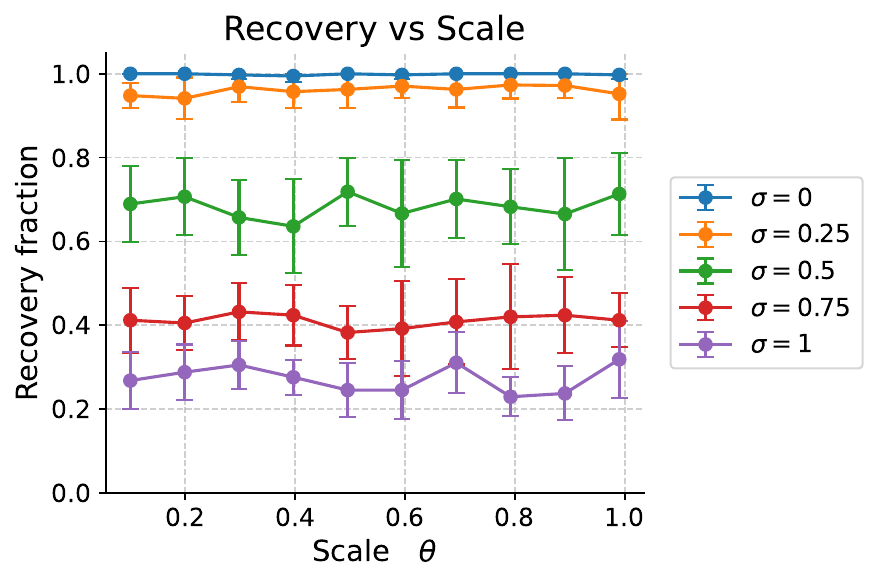}
    \caption{Simplex relaxed MLE $d=5,\,T=50$.}
  \end{subfigure}
  \hfill
  \begin{subfigure}[b]{0.45\textwidth}
    \includegraphics[width=0.95\linewidth,height=0.3\textheight,keepaspectratio]{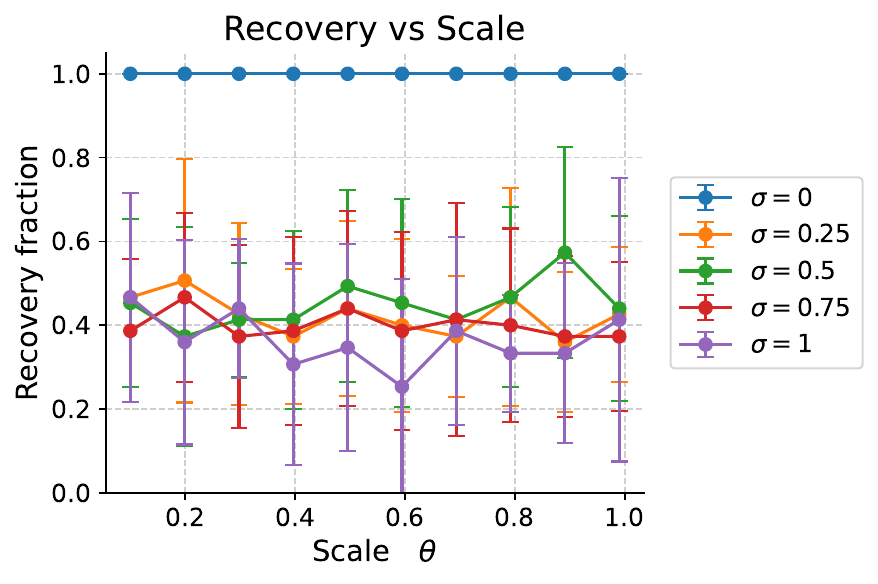}
    \caption{Simplex relaxed MLE $d=50,\,T=5$.}
  \end{subfigure}

  \begin{subfigure}[b]{0.45\textwidth}
    \includegraphics[width=0.95\linewidth,height=0.3\textheight,keepaspectratio]{Figures/A_unknown_AltMin/NEW_recovery_vs_scale_id_22_d_5_T_50_alpha_0.5_alg_LA_A_False.pdf}
    \caption{Linear assignment estimator $d=5,\,T=50$.}
  \end{subfigure}
  \hfill
  \begin{subfigure}[b]{0.45\textwidth}
    \includegraphics[width=0.95\linewidth,height=0.3\textheight,keepaspectratio]{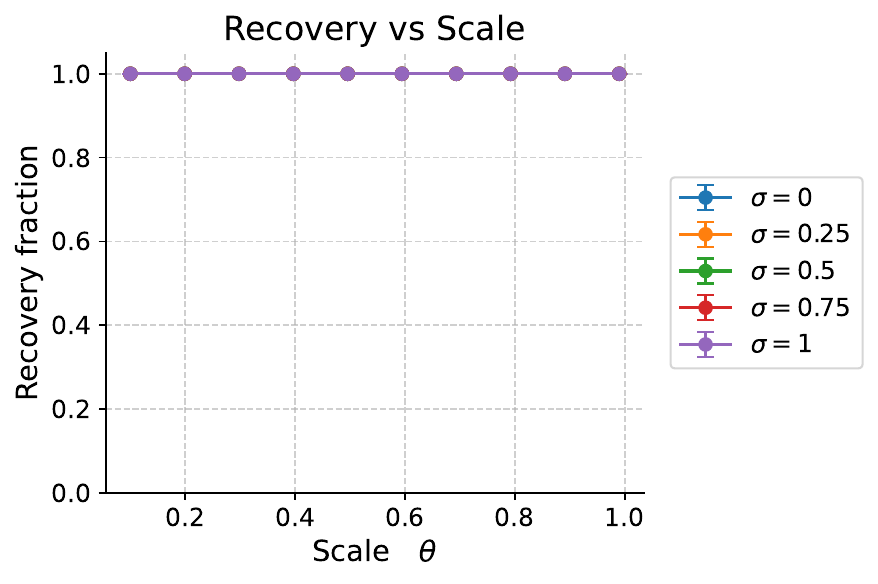}
    \caption{Linear assignment estimator $d=50,\,T=5$.}
  \end{subfigure}

  \begin{subfigure}[b]{0.45\textwidth}
    \includegraphics[width=0.95\linewidth,height=0.3\textheight,keepaspectratio]{Figures/A_unknown_AltMin/NEW_recovery_vs_scale_id_22_d_5_T_50_alpha_0.5_alg_PGD_A_False.pdf}
    \caption{Birkhoff relaxed MLE $d=5,\,T=50$.}
  \end{subfigure}
  \hfill
  \begin{subfigure}[b]{0.45\textwidth}
    \includegraphics[width=0.95\linewidth,height=0.3\textheight,keepaspectratio]{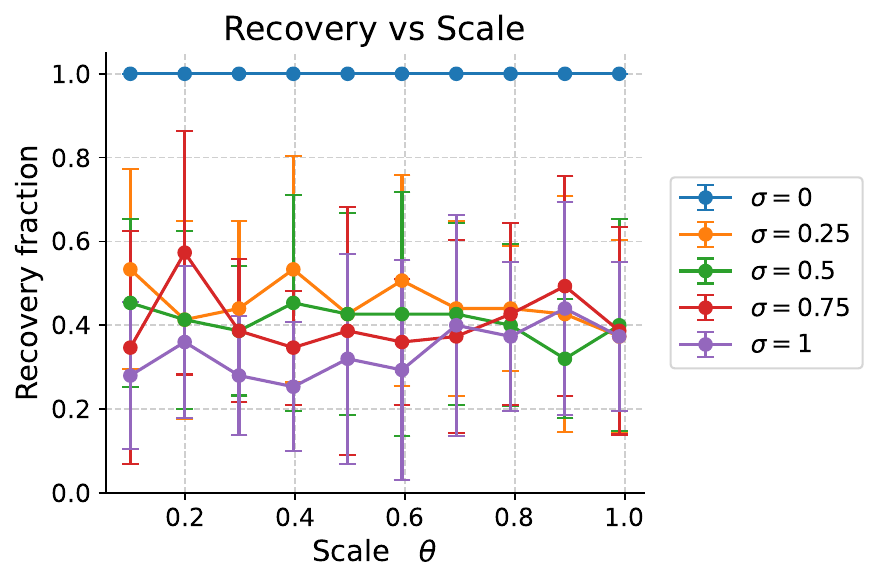}
    \caption{Birkhoff relaxed MLE $d=50,\,T=50$.}
  \end{subfigure}

  \caption{Recovery fraction vs. scale $\theta$ using Algorithm \ref{alg:TS_matching_alternating} with $K=5$. $A^*$ (unknown) is of the form \eqref{eq:parametric_A*}. We average $30$ Monte Carlo samples of the \cvar\ model. The error bars reflects one standard deviation above and below the mean.}
  \label{fig:recovery_vs_scale_A_unknown}
\end{figure}
\begin{remark}[Number of alternating optimization iterations]
     We observe that often after one or two alternating optimization iterations in Algorithm \ref{alg:TS_matching_alternating}, the estimator for the permutation converges. This suggest that estimating $A^*$ first, as discussed in Remark~\ref{rem:estimate_A_first}, is a viable alternative. We evaluate this in Appendix~\ref{sec:est_A_first}.
\end{remark}
\begin{figure}[htbp]
  \centering
  \begin{subfigure}[b]{0.45\textwidth}
    \includegraphics[width=0.95\linewidth,height=0.3\textheight,keepaspectratio]{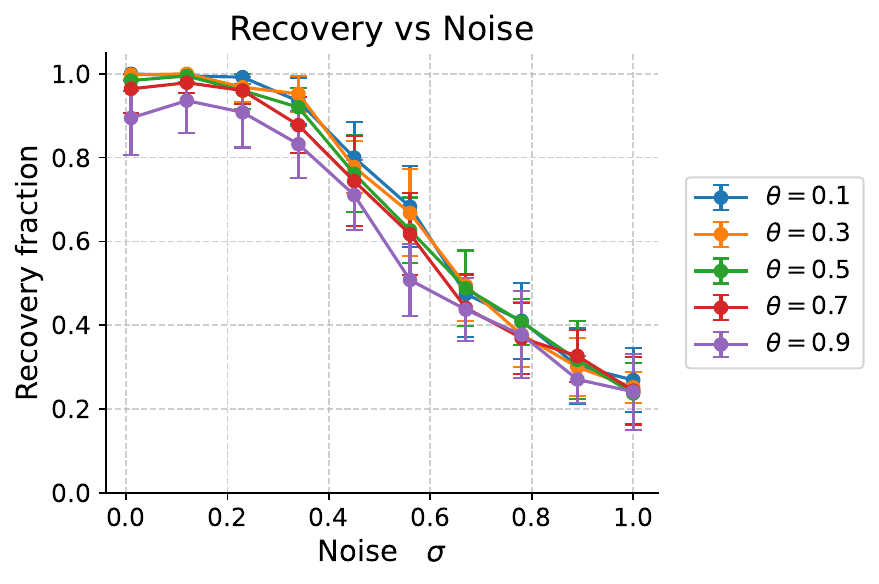}
    \caption{Hyperplane relaxed MLE $d=5,\,T=50$.}
  \end{subfigure}
  \hfill
  \begin{subfigure}[b]{0.45\textwidth}
    \includegraphics[width=0.95\linewidth,height=0.3\textheight,keepaspectratio]{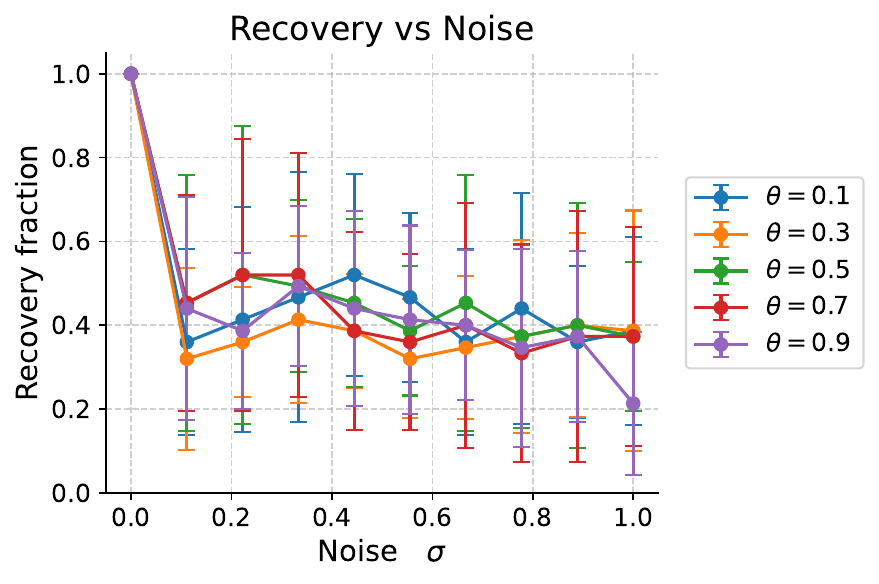}
    \caption{Hyperplane relaxed MLE $d=50,\,T=5$.}
  \end{subfigure}

  \begin{subfigure}[b]{0.45\textwidth}
    \includegraphics[width=0.95\linewidth,height=0.3\textheight,keepaspectratio]{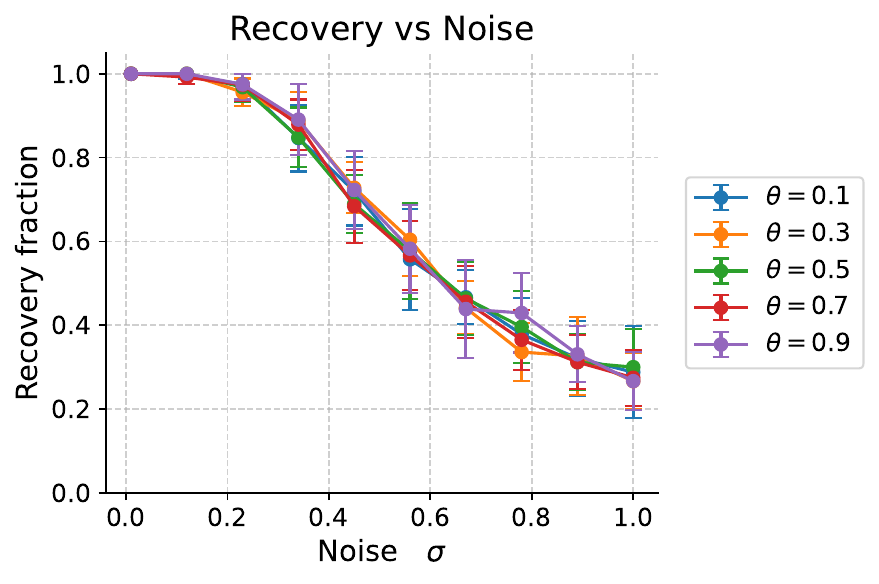}
    \caption{Simplex relaxed MLE $d=5,\,T=50$.}
  \end{subfigure}
  \hfill
  \begin{subfigure}[b]{0.45\textwidth}
    \includegraphics[width=0.95\linewidth,height=0.3\textheight,keepaspectratio]{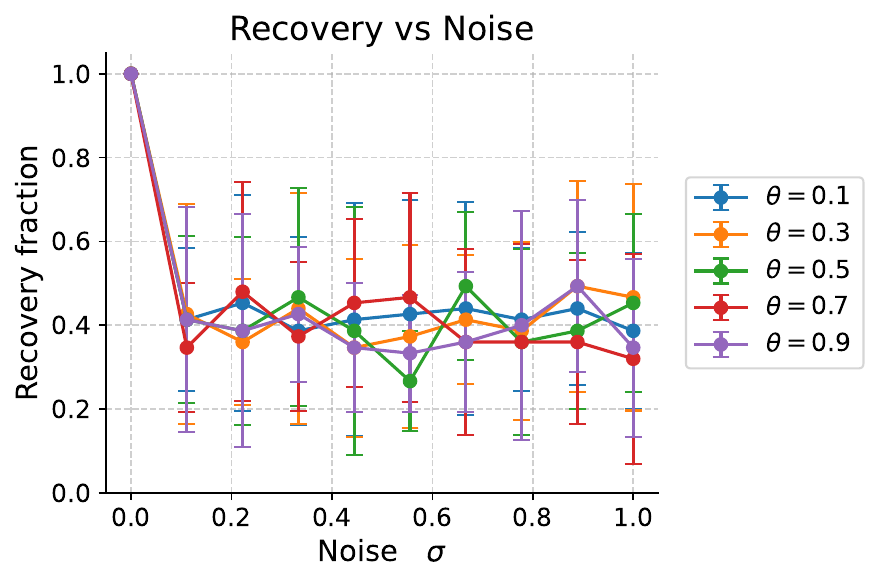}
    \caption{Simplex relaxed MLE $d=50,\,T=5$.}
  \end{subfigure}

  \begin{subfigure}[b]{0.45\textwidth}
    \includegraphics[width=0.95\linewidth,height=0.3\textheight,keepaspectratio]{Figures/A_unknown_AltMin/NEW_recovery_vs_sigma_id_17_d_5_T_50_alpha_0.5_alg_LA_A_False.pdf}
   \caption{Linear assignment estimator $d=5,\,T=50$.}
  \end{subfigure}
  \hfill
  \begin{subfigure}[b]{0.45\textwidth}
    \includegraphics[width=0.95\linewidth,height=0.3\textheight,keepaspectratio]{Figures/A_unknown_AltMin/NEW_recovery_vs_sigma_id_11_d_50_T_5_alpha_0.5_alg_LA_A_False.pdf}
    \caption{Linear assignment estimator $d=50,\,T=5$.}
  \end{subfigure}

  \begin{subfigure}[b]{0.45\textwidth}
    \includegraphics[width=0.95\linewidth,height=0.3\textheight,keepaspectratio]{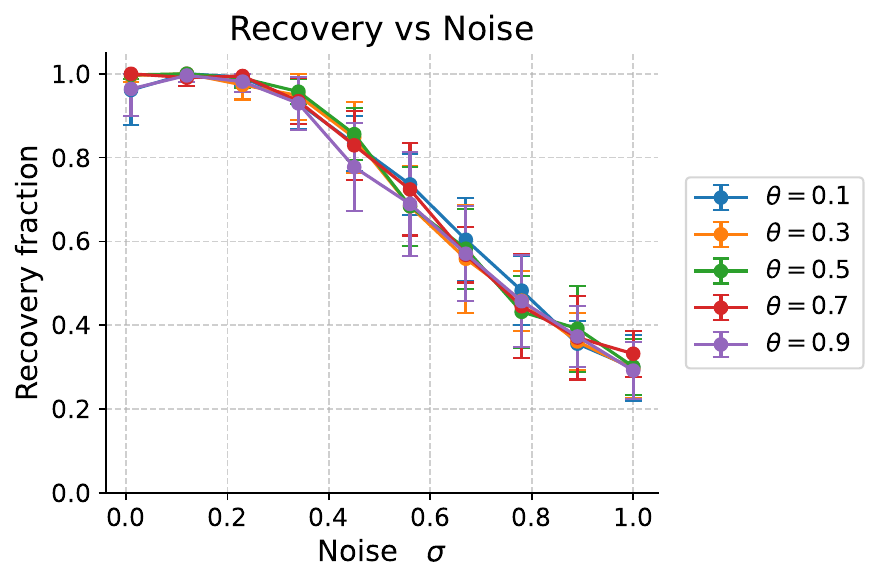}
    \caption{Birkhoff relaxed MLE $d=5,\,T=50$.}
  \end{subfigure}
  \hfill
  \begin{subfigure}[b]{0.45\textwidth}
    \includegraphics[width=0.95\linewidth,height=0.3\textheight,keepaspectratio]{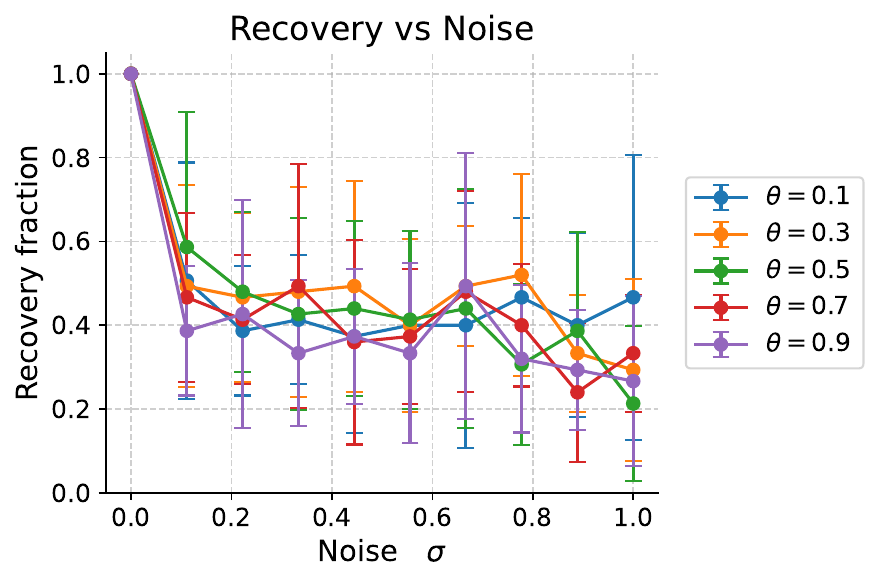}
    \caption{Birkhoff relaxed MLE $d=50,\,T=5$.}
  \end{subfigure}

  \caption{Recovery fraction vs. noise $\sigma$ with $A^*$ unknown. This figure is complementary to Fig. \ref{fig:recovery_vs_scale_A_unknown}, under an analogous setting.}
  \label{fig:recovery_vs_sigma_A_unknown}
\end{figure}
\paragraph{Estimation of $A^*$.}Although the estimation of $A^*$ is not our primary objective, Algorithm \ref{alg:TS_matching_alternating} jointly estimates both $\Pi^*$ and $A^*$. To assess the performance of the algorithm for the estimation of $A^*$, we fix $d=5$, $\theta=0.5$, and vary the time horizon $T\in\{10,20,30,50,100\}$. We plot the performance of the Birkhoff-based relaxation, but in our experiments the hyperplane and the simplex performs similarly for this task. We report the estimation error for $\sigma=0.5$, since other values produce qualitatively similar results. In Figure~\ref{fig:MSE_A}, we plot the MSE for the estimation of $A^*$ for different values of $T$. As expected, $\mathrm{MSE}(A)$ decreases as $T$ increases, indicating that Algorithm~\ref{alg:TS_matching_alternating} successfully estimates $A^*$. Figure~\ref{fig:scatter_A_Pi} shows a scatterplot of the estimation errors of $A^*$ and $\Pi^*$ (in terms of the recovery fraction) over $50$ samples with $T = 100$. Although there is a tendency for better estimation of $A^*$ to correspond to improved recovery of $\Pi^*$, similar estimation errors for $A^*$ can still lead to markedly different recovery fractions.

\begin{figure}[htbp]
  \centering
  \begin{subfigure}[b]{0.45\textwidth}
    \includegraphics[width=0.95\linewidth,height=0.3\textheight,keepaspectratio]{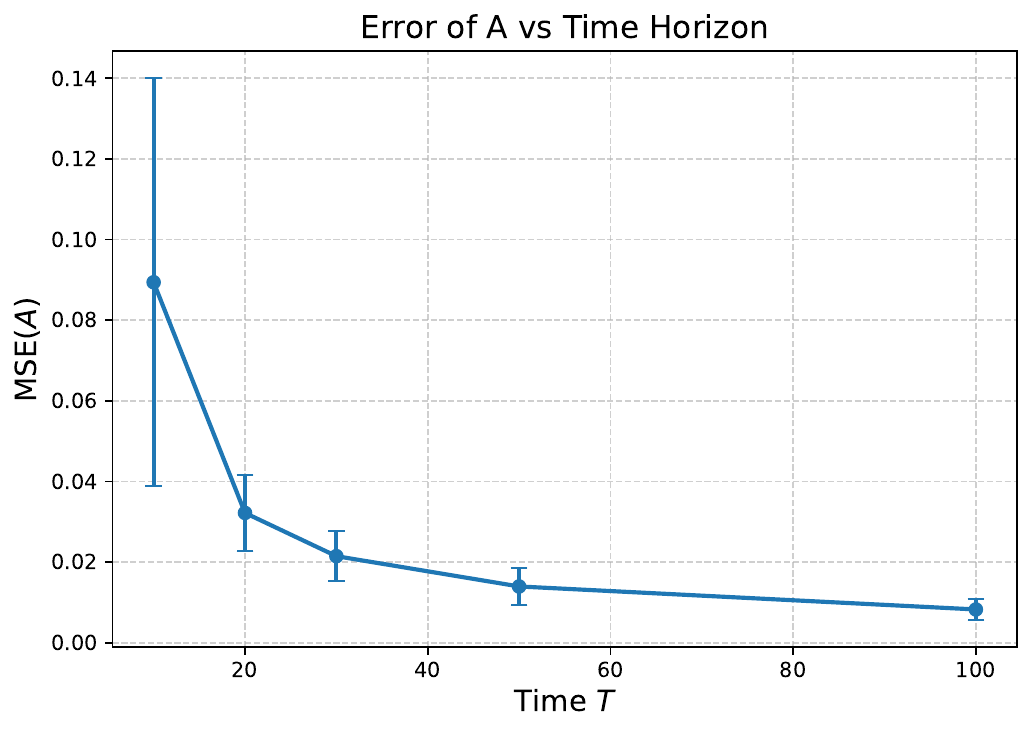}
    \caption{Estimation MSE for $A^*$ }
    \label{fig:MSE_A}
  \end{subfigure}
  \hfill
  \begin{subfigure}[b]{0.45\textwidth}
    \includegraphics[width=0.95\linewidth,height=0.22\textheight,keepaspectratio]{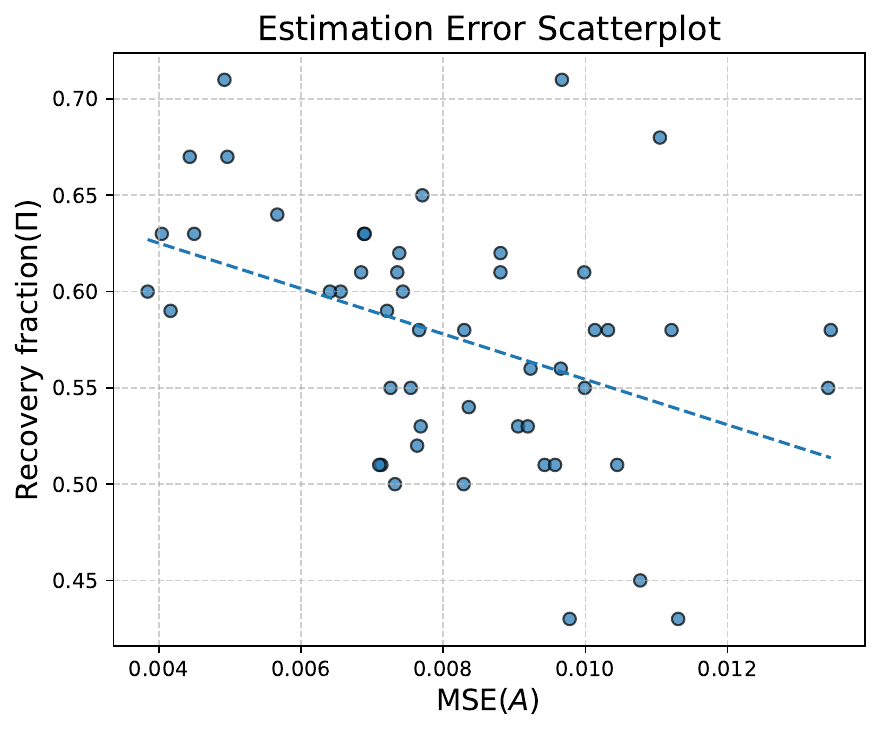}
    \caption{Error for estimating $\Pi^*$ and $A^*$}
    \label{fig:scatter_A_Pi}
  \end{subfigure}
  \caption{Estimation error for $A^*$. We fix $d=5$, $\sigma=0.5$, $\theta=0.5$. In Fig.\ref{fig:MSE_A} we plot MSE($A$) for $T\in\{10,20,30,50,100\}$ averaged over $30$ Monte Carlo samples (the error bars reflect one standard deviation above and below the mean). Fig.\ref{fig:scatter_A_Pi} is a scatter plot, over $50$ samples, for the error of estimating $\Pi^*$ and $A^*$. The dashed line represent the linear trend.}
  \label{fig:recovery_A}
\end{figure}
\paragraph{Recovery vs. $T$.}We examine the recovery performance of the LA estimator and Birkhoff PGD in function of the time horizon $T$. In Figure \ref{fig:recovery_vs_T_stable} we show the average recovery of these estimators for the scale $\theta = 0.5$ and $d\in\{5,25\}$. We observe that the performance of both estimators is very similar for both dimensions. It should be noted that while we report only the scale $\theta=0.5$, similar results were obtained for other values $\theta<1$. This supports the hypothesis that LA is already optimal, or near optimal, in this regime. In addition, we observe that the recovery worsens as $T$ grows, as predicted by Theorem \ref{thm:main_upper_bound}. As we will see next, the situation is slightly different for $\theta>1$.
\begin{figure}[ht]
    \centering
    \begin{subfigure}[b]{0.45\textwidth}
        \centering
        \includegraphics[width=0.95\linewidth,height=0.3\textheight,keepaspectratio]{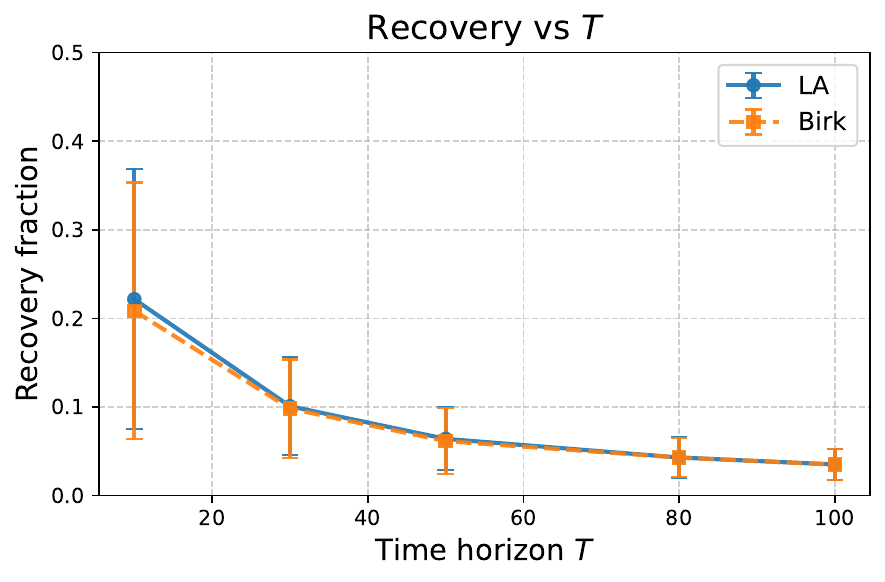}
        \caption{$\theta = 0.5$, $d=5$}
    \end{subfigure}
    \begin{subfigure}[b]{0.45\textwidth}
        \centering
        \includegraphics[width=0.95\linewidth,height=0.3\textheight,keepaspectratio]{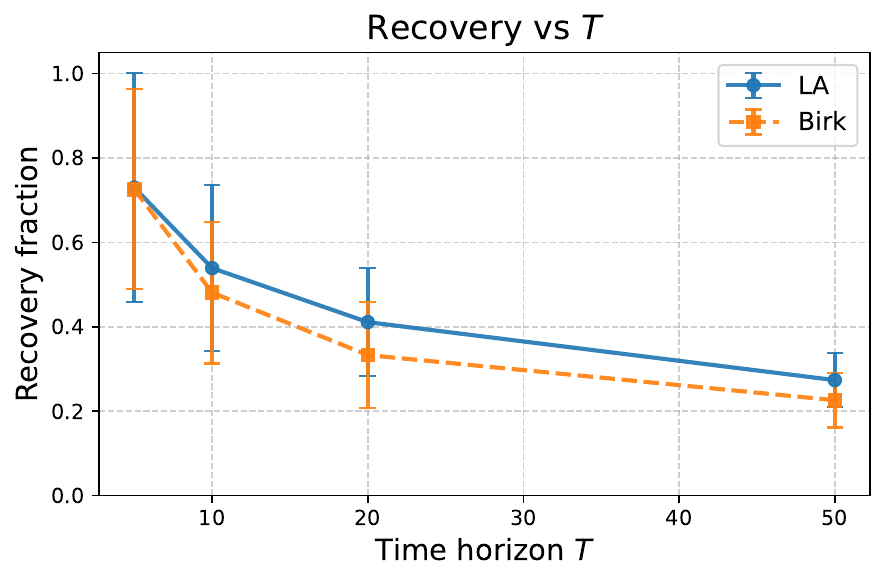}
        \caption{$\theta = 0.5$, $d=25$}
    \end{subfigure}
 \caption{Comparison of recovery performance for the LA and Birkhoff PGD estimators for different time horizons. The recovery fraction is the average over $30$ samples, and the error bars reflect one standard deviation above and below the average. }
    \label{fig:recovery_vs_T_stable}
\end{figure}

\paragraph{The case $\theta\geq 1$.} This case is interesting, since the processes $(x_t)_{t\in[T]},(x^\#)_{t\in[T]}$ become unstable, from the dynamical systems perspective. 
A natural question is if its possible to non-trivially recover the hidden permutation $\Pi^*$ in this regime, and does the problem becomes easier from the matching perspective. Intuitively, the problem might become easier if the individual points are more separated. We evaluate this by considering the scales $\theta\in\{1.5,2,2.5,3\}$.

In Figure~\ref{fig:recovery_vs_T_unstable}, we plot the average recovery fraction for the estimated permutation as a function of $T$. The comparison includes the performance of the LA estimator and the Birkhoff PGD method. We observe a transition in recovery performance as the scale parameter $\theta$ increases: for $\theta = 1.5$, the Birkhoff-based estimator slightly outperforms LA, whereas for $\theta = 3$, LA achieves better recovery. A similar trend appears when varying $T$: for smaller $T$, Birkhoff PGD performs slightly better, while for larger $T$, LA tends to outperform it in average. It should be noted that the LA estimator has larger variance at the considered scales. Intuitively, as $T$ grows, the underlying unstable processes evolve for longer periods, leading to more separated trajectories and hence an easier matching task. The same argument applies for larger scales, as the separation between points increases quickly.
\begin{figure}[ht]
    \centering
    \begin{subfigure}[b]{0.45\textwidth}
        \centering
        \includegraphics[width=0.95\linewidth,height=0.3\textheight,keepaspectratio]{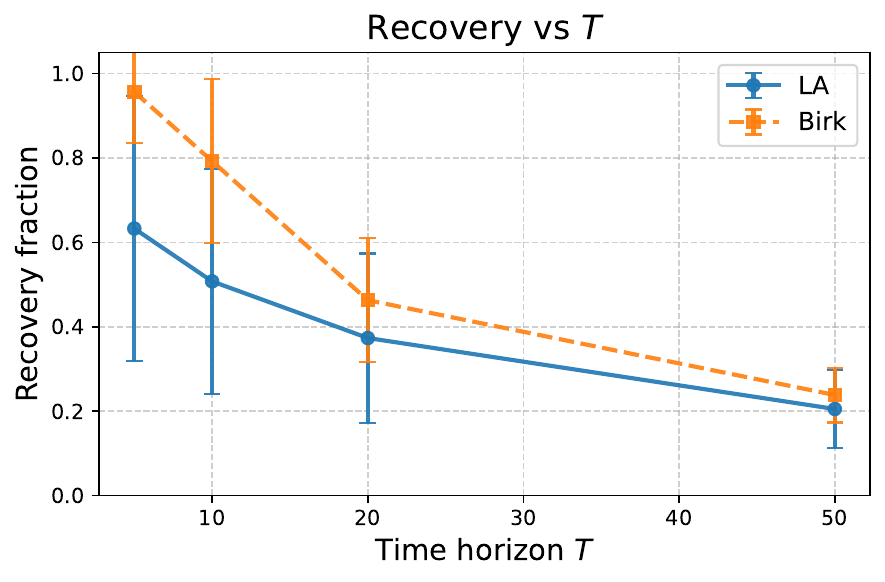}
        \caption{$\theta = 1.5$}
    \end{subfigure}
    \begin{subfigure}[b]{0.45\textwidth}
        \centering
        \includegraphics[width=0.95\linewidth,height=0.3\textheight,keepaspectratio]{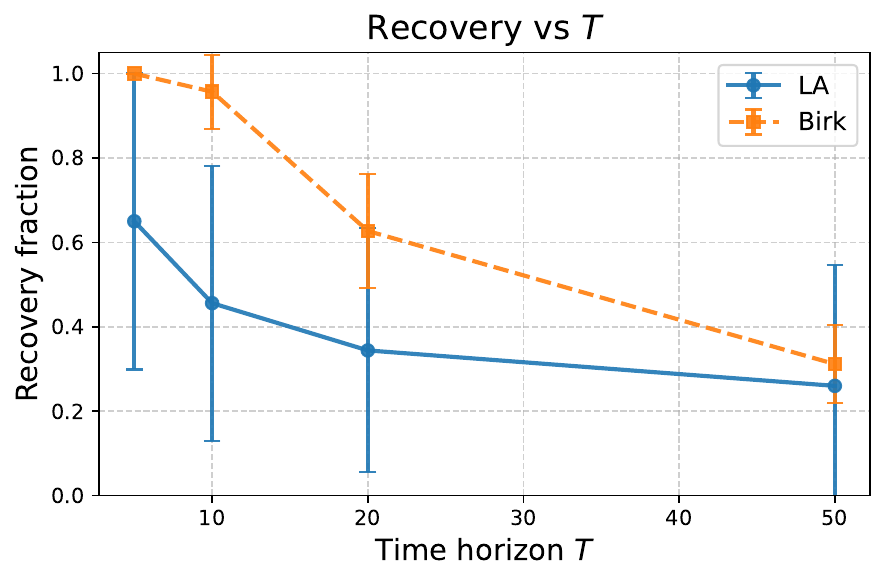}
        \caption{$\theta = 2$}
    \end{subfigure}
    \begin{subfigure}[b]{0.45\textwidth}
        \centering
        \includegraphics[width=0.95\linewidth,height=0.3\textheight,keepaspectratio]{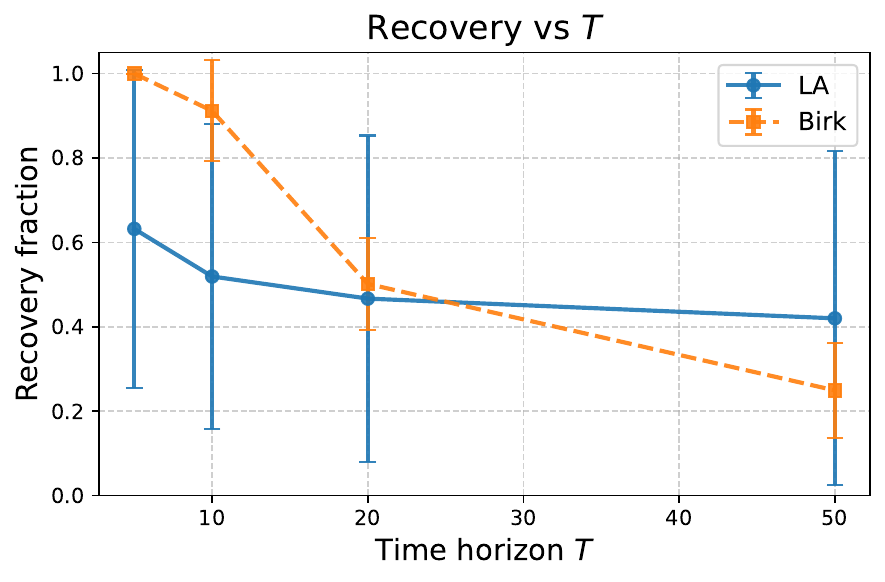}
        \caption{$\theta = 2.5$}
    \end{subfigure}
    \begin{subfigure}[b]{0.45\textwidth}
        \centering
        \includegraphics[width=0.95\linewidth,height=0.3\textheight,keepaspectratio]{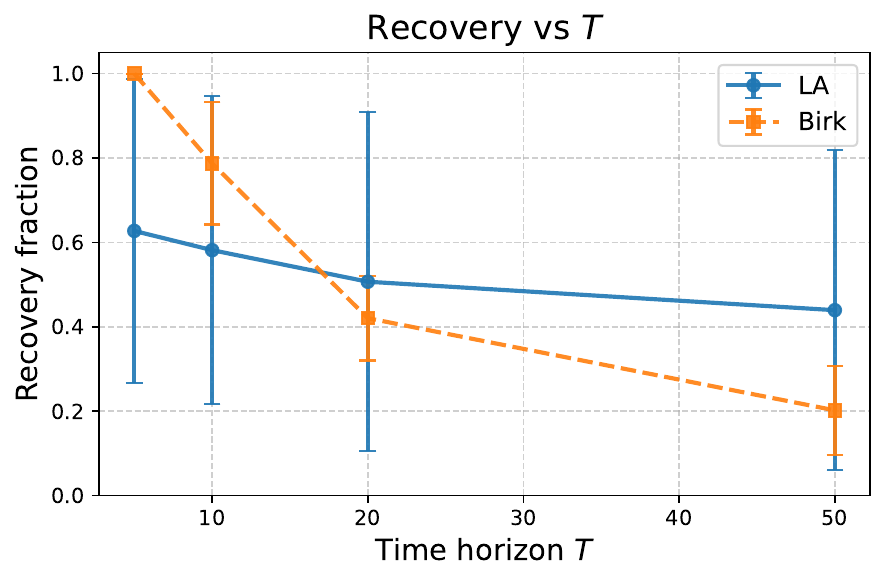}
        \caption{$\theta = 3$}
    \end{subfigure}
    \caption{Comparison of recovery performance across different values of $\theta$. Here $d=25$ and the recovery fraction is the average over $30$ samples. The error bars reflect one standard deviation above and below the average. }
    \label{fig:recovery_vs_T_unstable}
\end{figure}
\section{Conclusion and open questions}\label{sec:conclusion}
%
In this paper, we studied the problem of matching correlated VAR time series, extending the point-cloud matching framework of \cite{KuniskyWeed} to a temporal setting. We introduced a model in which a base VAR process $(x_t)_{t\in[T]}$ is perturbed by a $\sigma$-scaled independent copy and then permuted by an unknown $\pi^*$, yielding $(\hx_t)_{t\in[T]}$.

We derived the MLE for recovering $\pi^*$, from the observation of $\left((x_t)_{t\in[T]},(\hx_{t\in[T]})\right)$, and showed that it leads to a quadratic assignment problem, which is NP-hard in general. To obtain tractable alternatives, we theoretically analyzed the linear assignment estimator and established conditions---expressed as thresholds on $\sigma$---under which perfect or partial recovery is guaranteed. We also developed an alternating minimization based framework for solving the MLE, where the latent permutation is iteratively estimated by solving a suitable convex relaxation of the set of permutation matrices, thus enabling efficient first-order algorithms. Finally, we evaluated both the linear assignment estimator and the MLE relaxations on synthetic datasets, demonstrating their practical performance.

There are several promising directions for future work.

\begin{itemize}
    \item \textbf{Information-theoretic limits.} A natural open question is to determine the fundamental limits for recovering $\pi^*$. While related bounds are known for point-cloud matching \cite{schwengber2024geometricplantedmatchingsgaussian}, it remains unclear whether those techniques extend to settings with temporal correlations.  

    \item \textbf{Extending the analysis.} It would be interesting to extend our results to the regime $\rho(A^*) \le 1$, or even $\rho(A^*) > 1$, where the spectral radius $\rho(\cdot)$ characterizes the stability of linear dynamical systems. In the system identification literature for learning VAR models from a single trajectory, these two regimes have been studied recently for the setting $\rho(A^*) \le 1$ \cite{Simchowitz18a,Sarkar19} where non-asymptotic error bounds for recovering $A^*$ were obtained. These were also studied for the case $\rho(A^*) > 1$ in \cite{Sarkar19} where the results hold for ``regular'' matrices\footnote{Matrices for which the geometric multiplicity of eigenvalues lying outside the unit circle, is one.} $A^*$. 

    \item \textbf{Alternative problem formulations.} Our analysis focused on permutations of time indices, i.e., column permutations of the $d\times T$ matrix $X'$ (whose columns are the elements of $(x'_t)_{t\in[T]}$, defined in \eqref{eq:def_Xprime}). Another meaningful variant permutes the rows of $X'$, effectively shuffling the coordinates of the time series. This has potential applications in problems such as dynamic time warping and time-series alignment, and may require different analytical tools.
\end{itemize}

%
\bibliographystyle{plain}
\bibliography{refs}


\appendix 
\section{Proof of Lemma \ref{lem:MLE_A_pi_fixed_sigma_known}}\label{app:proof_MLE_A_pi_fixed_sigma_known}
The solution to \eqref{eq:MLE_A_fixed_pi_known_sigma} can be found in closed form. Indeed, note that
\begin{align*}
    \|X-AXS\|^2_F&=\|\operatorname{vec}(X)-\left((XS)^\top\otimes I_d\right)\operatorname{vec}(A)\|^2_2, \\
    \|\hX\Pi-A\hX \Pi S-(X-AXS)\|^2_F&=\|\operatorname{vec}(\hX\Pi-X)-\left((\hX \Pi S)^\top\otimes I_d\right)\operatorname{vec}(A)+\left((XS)^\top\otimes I_d\right)\operatorname{vec}(A)\|^2_2
\end{align*}
Define, 
\begin{align*}
 v&:=\operatorname{vec}(X),\\
 V&:=(XS)^\top\otimes I_d,\\
 u&:=\operatorname{vec}(\hX\Pi-X),\\
 U&:=(\hX \Pi S)^\top\otimes I_d+(XS)^\top\otimes I_d,\\
 a&:=\operatorname{vec}(A).
\end{align*}
So \eqref{eq:MLE_A_fixed_pi_known_sigma} can be rewritten as 
\begin{equation*}
    \operatorname{vec}\left(\est{A}_{\operatorname{MLE}}(\Pi)\right)=\argmin{a\in \matR^{d^2}}\left\{\|v-Va\|^2_2+\frac1{\bar{\sigma}^2}\|u-Ua\|^2_2\right\}.
\end{equation*}
Define $g(a):=\|v-Va\|^2_2+\frac1{\bar{\sigma}^2}\|u-Ua\|^2_2$. Then, 
\begin{align*}
    \nabla g(a)&=2V^\top Va - 2 V^\top v + \frac1{\bar{\sigma}^2}\left(2U^\top U a-2 U^\top u\right)\\
    &=2\left((V^\top V + \frac1{\bar{\sigma}^2}U^\top U)a-(V^\top v+\frac1{\bar{\sigma}^2}U^\top u)\right).
\end{align*}
Hence, the condition $\nabla g(a)=0$ is equivalent to 
\begin{equation*}
    \left(V^\top V + \frac1{\bar{\sigma}^2}U^\top U\right)a=V^\top v+\frac1{\bar{\sigma}^2} U^\top u,
\end{equation*}
Now, 
\begin{equation*}
V^\top V+\frac1{\bar{\sigma}^2}U^\top U=\left((XS)(XS)^\top+\frac1{\bar{\sigma}^2} (\hX\Pi S-XS)(\hX\Pi S-XS)^\top\right)\otimes I_d.  
\end{equation*}
Also,
\begin{align*}
    V^\top v&=\left((XS)\otimes I_d \right)\operatorname{vec}(X)\\
    U^\top u&=\left((\hX\Pi S-XS)\otimes I_d \right)\operatorname{vec}(\hX\Pi-X).
\end{align*}
This translates to
\begin{align*}
    \left(V^\top V+\frac1{\bar{\sigma}^2}U^\top U\right)\operatorname{vec}(A)=A\left((XS)(XS)^\top+\frac1{\bar{\sigma}^2} (\hX\Pi S-XS)(\hX\Pi S-XS)^\top\right),
\end{align*}
and 
\begin{align*}
    V^\top v+ \frac1{\bar{\sigma}^2}U^\top u = X(XS)^\top +\frac1{\bar{\sigma}^2}\left(\hX\Pi-X\right)\left(\hX\Pi S-XS\right)^\top.
\end{align*}
So, if $\est{A}$ satisfies \eqref{eq:MLE_A_fixed_pi_known_sigma}, then 
\begin{equation*}
    \est{A}\left((XS)(XS)^\top+\frac1{\bar{\sigma}^2} (\hX\Pi S-XS)(\hX\Pi S-XS)^\top\right)=X(XS)^\top +\frac1{\bar{\sigma}^2}\left(\hX\Pi-X\right)\left(\hX\Pi S-XS\right)^\top.
\end{equation*}
From this, the result follows. 
%
\section{Proof of Lemma \ref{lem:Gaussian_2cycles}}\label{app:proof_Gaussian_2cycles}
To ease notation, we use $A$ instead of $A^*$. Note that, for $a,b\in \matN$, with $a>b$, we have  
\begin{align*}
    \tilde{x}_a&=A^{a-1}\txi_1+A^{a-2}\txi_2+\ldots+A^{a-b}\txi_b+A^{a-b-1}\txi_{b+1}+\ldots+\txi_a, \\
    \tilde{x}_b&=A^{b-1}\txi_1+A^{b-2}\txi_2+\ldots+\txi_b,
\end{align*}
from which we obtain 
\begin{align*}
A(\tx_{a-1}-\tx_{b-1})=&A^{b-1}\left(A^{a-b}-I_d \right)\txi_1+A^{b-2}\left(A^{a-b}-I_d \right)\txi_2+\ldots+ A\left(A^{a-b}-I_d\right)\txi_{b-1}\\
&+\left(A^{a-b}\txi_b+\ldots+A\txi_{a-1}\right).
\end{align*}
With some algebra, we get 
\begin{align*}
  \ty_{ab} := A(\tx_{a-1}-\tx_{b-1})+\txi_a-\txi_b&=\sum^{b-1}_{i=0}A^i\left(A^{a-b}-I_d\right)\txi_{b-i}+\sum^{a-b}_{i=1}A^{a-b-i}\txi_{b+i}.
\end{align*}
Recalling the definition of $y_{ab}, \ty_{ab}$, it is clear that 
\begin{align*}
    \langle \ty_{ab},y_{ab}\rangle=\underbrace{\left\langle y_{ab},\sum^{b-1}_{i=0}A^i\left(A^{a-b}-I_d\right)\txi_{b-i}\right\rangle}_{=:\zeta_1}+\underbrace{\left\langle y_{ab},\sum^{a-b}_{i=1}A^{a-b-i}\txi_{b+i}\right\rangle}_{=:\zeta_2}.
\end{align*}
Conditioned on $\xi_1,\dots,\xi_a$, note that $\zeta_1$ and $\zeta_2$ are (independent) Gaussians (being linear combination of jointly Gaussian variables) of the form $\zeta_1\sim \calN(0,\tilde{\sigma}^2_1)$, $\zeta_2\sim \calN(0,\tilde{\sigma}^2_2)$, with
\begin{align*}
    \tilde{\sigma}^2_1&=y^\top_{ab}\left(\sum^{b-1}_{i=0}A^i\left(A^{a-b}-I_d\right) \rev{(A^{a-b}-I_d)^\top} {A^i}^\top\right)y_{ab}\\
     \tilde{\sigma}^2_2&=y^\top_{ab}\left(\sum^{a-b}_{i=1}A^{a-b-i}{A^{a-b-i}}^\top\right)y_{ab}.
\end{align*}
Furthermore, it holds 
\begin{align*}
    \tilde{\sigma}^2_1 \ \leq \ \|y_{ab}\|^2_2\|A^{a-b}-I_d\|^2_2\sum^{b-1}_{i=0}\|A^i\|^2_2
    \ &\leq \ \|y_{ab}\|^2_2\|A^{a-b}-I_d\|^2_2\sum^{b-1}_{i=0}\|A\|^{2i}_2 \\
    &\leq \frac{\|y_{ab}\|^2_2\|A^{a-b}-I_d\|^2_2}{1-\|A\|^2_2},
\end{align*}
and also
\begin{align*}
    \tilde{\sigma}^2_2 \ \leq \ \|y_{ab}\|^2_2 \norm{\sum^{a-b}_{i=1}A^{a-b-i}{A^{a-b-i}}^\top}_2
    \ &\leq \ \|y_{ab}\|^2_2\sum^{a-b}_{i=1}\|A\|^{2(a-b-i)}_2 \\
    &= \|y_{ab}\|^2_2\frac{1}{1-\|A\|^2_2}.
\end{align*}
From this, we deduce that if $\norm{A}_2 < 1$, then 
\begin{align*}
    \sigma^2(\tilde{\sigma}^2_1+\tilde{\sigma}^2_2) \ \leq \ \frac{\sigma^2\|y_{ab}\|^2_2}{1-\|A\|^2_2}\left(1+\underbrace{\|A^{a-b}-I_d \|^2_2}_{\leq 4}\right)
    \ \leq \ \frac{5\sigma^2\|y_{ab}\|^2_2}{1-\|A\|^2_2}.
\end{align*}
This completes the proof. 
%
%

%
%
%
%
%
\section{Auxiliary lemmas for proof of Proposition \ref{prop:det_bound}}
\subsection{Proof of Lemma \ref{lem:superadd_geo_means}}\label{app:proof_sup_geo_means}

    We have 
    \begin{align*}
        \left(\frac{\prod^n_{k=1}a_k}{\prod^n_{k=1}(a_k+b_k)}\right)^{1/n}+\left(\frac{\prod^n_{k=1}b_k}{\prod^n_{k=1}(a_k+b_k)}\right)^{1/n}&=\left(\prod^n_{k=1}\frac{a_k}{a_k+b_k}\right)^{1/n}+\left(\prod^n_{k=1}\frac{b_k}{a_k+b_k}\right)^{1/n}\\
        &\leq \frac1n\sum^n_{k=1}\left(\frac{a_k}{a_k+b_k} \right) + \frac1n\sum^n_{k=1} \left(\frac{b_k}{a_k+b_k} \right) \\
        &=1,
    \end{align*}
    where we used the AM-GM inequality in the second line. Multiplying both sides by $\left(\prod^n_{k=1}(a_k+b_k)\right)^{1/n}$ gives the result.
\subsection{Proof of Lemma \ref{lem:pseudodet_gen_PLPt} }\label{app:proof_pseudodet_gen_PLPt}
    Consider the eigenvalue decomposition of $Z$, 
    \[
    Z=U\begin{bmatrix}
        \Lambda & 0\\
        0 & 0
    \end{bmatrix}U^\top,
    \]
    where $\Lambda\in \matR^{p\times p}$ is a diagonal matrix with positive entries. Let $U_p\in \matR^{q\times p}$ be the matrix whose columns are the first $p$ columns of $U$ (i.e., the columns are the eigenvectors associated with non-zero eigenvalues). Then, 
    \[
    Z=U_p\Lambda U_p,
    \]
    and 
    \[
    WZW^\top=(WU_p)\Lambda(WU_p)^\top.
    \]
    This implies that, 
    \begin{align*}
        {\det}^*(WZW^\top)&={\det}^*\left((WU_p)\Lambda^{\frac12}\Lambda^{\frac12}(WU_p)^\top\right)\\
        &\substack{(1)\\=}{\det}^*\left(\Lambda^{\frac12}(WU_p)^\top(WU_p)\Lambda^{\frac12}\right)\\
        &\substack{(2)\\=}\det\left(\Lambda^{\frac12}(WU_p)^\top(WU_p)\Lambda^{\frac12}\right)\\
        &=\det(\Lambda)\det\left((WU_p)^\top(WU_p)\right)\\
        &={\det}^*(L)\det\left((WU_p)^\top(WU_p)\right).
    \end{align*}
    In $(1)$ we used that ${\det}^*(AB)={\det}^*(BA)$, which holds because $AB$ and $BA$ have the same nonzero eigenvalues, counting multiplicities, for any matrices $A,B$ such that the products $AB$, $BA$ are well defined. To obtain $(2)$, we used that the $p\times p$ matrix $\Lambda^{\frac12}(WU_p)^\top(WU_p)\Lambda^{\frac12}$ has full rank $p$, which holds because $W^\top W$ has rank $p$ (i.e., it is full rank) by assumption. In the last two lines we used well-known properties of the determinant and the fact that ${\det}^*(L)=\det(\Lambda)$, by definition. 
\subsection{Proof of Lemma \ref{lem:bound_PUtPU}}\label{app:proof_bound_PUtPU}
    To prove this lemma, we use a generalized version of Ostrowski's inequality \cite[Thm.3.2]{HIGHAM1998261} to rectangular matrices. By that result, we get
    \begin{equation*}
        \lambda_k\left(U^\top_{(t-1)d}P^\top PU_{(t-1)d}\right)=\eta_k\mu_k,\text{ for }k\in[(t-1)d],
    \end{equation*}
    where 
    \[\lambda_{td-k+1}\left(P^\top P\right)\leq \mu_{(t-1)d-k+1}\leq \lambda_{(t-1)d-k+1}\left(P^\top P\right),\]
    and 
    \[\lambda_{(t-1)d}\left(U^\top_{(t-1)d}U_{(t-1)d}\right)\leq \eta_k\leq \lambda_1\left(U^\top_{(t-1)d}U_{(t-1)d}\right).\]
    But, $\lambda_1\left(U^\top_{(t-1)d}U_{(t-1)d}\right)=\lambda_{(t-1)d}\left(U^\top_{(t-1)d}U_{(t-1)d}\right)=1$, which implies that $\eta_k=1$, for all $k\in[(t-1)d]$. From this, we deduce that 
\begin{align*}
     \det\left((PU_{(t-1)d})^\top(PU_{(t-1)d})\right)&=\prod^{(t-1)d}_{k=1}\lambda_k\left((PU_{(t-1)d})^\top(PU_{(t-1)d})\right)\\
     &=\prod^{(t-1)d}_{k=1}\mu_k\\
     &\geq  \prod^{(t-1)d}_{k=1}\lambda_{td-k+1}\left(P^\top P\right) \\
     &= \prod^{td}_{k=d+1}\lambda_k\left(P^\top P\right)
     =\frac{\det\left(P^\top P\right)}{\prod^d_{k=1}\lambda_k\left(P^\top P\right)}.
\end{align*}
\subsection{Proof of Lemma \ref{lem:det_PtP}}\label{app:proof_lem:det_PtP}
    We will use $A$ instead of $A^*$ to ease notation. Note that, by definition, 
    \begin{equation*}
        P=\begin{bmatrix}
            A^{i_1-1}&A^{i_2-1} &\cdots&\cdots &A^{i_t-1} \\
            \vdots & \vdots&\cdots &\cdots& \vdots\\
            \vdots &\vdots &\cdots &\cdots &I_d \\
            \vdots & A &\cdots &\cdots& 0\\
             A^{i_1-i_2} & I_d&\cdots&\cdots &\vdots \\
            A^{i_1-i_2-1} & 0& \cdots&\cdots&\vdots \\
            \vdots & \vdots&\cdots&\cdots & \vdots\\
            A &\vdots &\cdots&\cdots & \vdots\\
            I_d &0 & \cdots&\cdots&0
        \end{bmatrix}.
    \end{equation*}
    We now define the following matrices in $\matR^{i_1d \times d} $
    \[
    V_1 =
\begin{bmatrix}
  0 \\
  \vdots \\
  \vdots \\
  0 \\
  \vdots\\
  0 \\
  A^{i_1-i_2-1} \\
  \vdots \\
  A\\
  I_d
\end{bmatrix}
\quad
\begin{matrix}
  \left. \begin{matrix}
\, \\ \, \\ \, \\ \,\\\,\\\,\\\,
\end{matrix} \right\} \substack{i_2 \, d\times d \\ \text{zero blocks}}

\\
\\
\\
\\
\\
\\
\end{matrix},
V_2 =
\begin{bmatrix}
  0\\
  \vdots\\
  0 \\
  A^{i_2-i_3-1} \\
  \vdots \\
  A \\
  I_d \\
  0 \\
  \vdots\\
  0
\end{bmatrix}
\quad
\begin{matrix}
  \left. \begin{matrix}
\, \\ \, \\ \, 
\end{matrix} \right\} \substack{i_3 \, d\times d \\ \text{zero blocks}}
\\
\\
\\
\\
\\
\\
 \left.\begin{matrix}  \, \\\, \\ \, \end{matrix}\right\} \substack{(i_1-i_2) \, d\times d \\ \text{zero blocks}}
\end{matrix},\,
\cdots, \,
V_t =
\begin{bmatrix}
  A^{i_t-1} \\
  \vdots\\
  I_d \\
  0\\
  \vdots \\
  \vdots \\
  0\\
  \vdots\\
  0
\end{bmatrix}
\quad
\begin{matrix}
\\
\\
\\
 \left.\begin{matrix}\, \\  \, \\\, \\ \,\\ \, \\\,\\\, \end{matrix}\right\} \substack{(i_1-i_t) \, d\times d \\ \text{zero blocks}}
\end{matrix}.
\]
In other words, for $l\in [i_1d]$, and $j\in[d]$, 
\begin{align*}
(V_1)_{lj}&=\begin{cases}
    0\,\quad \text{  if } \,1\leq l\leq i_2d\\
    A^{i_1-\lceil l/d\rceil}_{lj}\, \quad \text{   if  }\, i_2d+1\leq l \leq i_1d,
\end{cases},
\\
(V_k)_{lj}&=\begin{cases}
    0\,\quad \text{  if } \,1\leq l\leq i_{k+1}d\\
    A^{i_k-\lceil l/d\rceil}_{lj}\,\quad \text{  if } \,i_{k+1}d+1\leq l\leq i_{k}d\\
    0\, \quad  \text{  if } \,i_{k}d+1\leq l\leq i_{1}d
\end{cases},\quad \text{for } k\in\{2,\ldots,t-1\},
\\
(V_t)_{lj}&=\begin{cases}
    A^{i_t-\lceil l/d\rceil}_{lj}\,\quad \text{  if } \,1\leq i\leq i_t\\
    0\, \quad \text{   if  }\, i_t+1\leq i \leq i_1
\end{cases}.
\end{align*}
From this definition, it is clear that the matrices $V_1,\ldots,V_t$ are pairwise orthogonal, i.e., $\langle V_k,V_{k'}~\rangle_F~=~0$, for $k,k'\in [t]$. We define the following shifting matrices, $\{S_{k,j-1}\}_{2\leq k\leq t, 2\leq j\leq t}$ in $\matR^{t\times t}$, 
\begin{equation*}
    (S_{k,j-1})_{ll'}=\begin{cases}
        1\,\quad \text{ for }\, l=k,l'=j-1\\
        0\, \quad \text{otherwise}
    \end{cases}.
\end{equation*}
To see the effect of post-multiplying by this matrices, consider  $U=\begin{bmatrix}
    u_1 & u_2 &\ldots&u_{t}
\end{bmatrix}\in \matR^{i_1\times t}$. Then
\begin{equation*}
    US_{k,j-1}=\begin{bmatrix}
        0&\cdots&0&\underbrace{u_k}_{j-1 \text{ position}}&0&\cdots&0
    \end{bmatrix}.
\end{equation*}
In words, post-multiplying $U$ by $S_{k,j-1}$ forms a new matrix with the same dimensions of $U$, with its $(j-1)$-th columns equal to the $k$-th column of $U$, and the rest of the columns are zero. With this, we express $P$ as follows, 
\begin{align*}
P=&\begin{bmatrix}
    V_1&V_2&\cdots &V_t
\end{bmatrix}+\sum^t_{k=2}(I_{i_1}\otimes A^{i_1-i_k})\begin{bmatrix}
    V_k&0&\cdots&0
\end{bmatrix}+\sum^t_{k=3}(I_{i_1}\otimes A^{i_2-i_k})\begin{bmatrix}
    0&V_k&0&\cdots&0
\end{bmatrix}\\
&+\ldots+ (I_{i_1}\otimes A^{i_{t-1}-i_t})\begin{bmatrix}
    0&\cdots&0&V_{t}&0
\end{bmatrix}\\
=&V+\sum^t_{j=2}\sum^t_{k=j}
(I_{i_1}\otimes A^{i_{j-1}-i_k})V(S_{k,j-1}\otimes I_d),
\end{align*}
where $V:=\begin{bmatrix}
    V_1&V_2&\cdots&V_t
\end{bmatrix}$. On the other hand, for all $k,j\in [t]$,
\[
(I_{i_1}\otimes A^{i_{j-1}-i_k})V(S_{k,j-1}\otimes I_d)=V(S_{k,j-1}\otimes I_d)(I_{t}\otimes A^{i_{j-1}-i_k}),
\]
which implies, 
\begin{align*}
P&=V+\sum^t_{j=2}\sum^t_{k=j}V(S_{k,j-1}\otimes I_d)(I_{t}\otimes A^{i_{j-1}-i_k})\\
&=V+\sum^t_{j=2}\sum^t_{k=j}V\left(S_{k,j-1}\otimes A^{i_{j-1}-i_k}\right)\\
&=V\left(I_{td}+\sum^t_{j=2}\sum^t_{k=j}S_{k,j-1}\otimes A^{i_{j-1}-i_k}\right).
\end{align*}
With this, we have 
\begin{equation*}
    P^\top P = \left(I_{td}+\sum^t_{j=2}\sum^t_{k=j}S_{k,j-1}\otimes A^{i_{j-1}-i_k}\right)^\top V^\top V\left(I_{td}+\sum^t_{j=2}\sum^t_{k=j}S_{k,j-1}\otimes A^{i_{j-1}-i_k}\right).
\end{equation*}
Given that $I_{td}+\sum^t_{j=2}\sum^t_{k=j}
S_{k,j-1}\otimes A^{i_{j-1}-i_k}$ and $V^\top V$ are square matrices, we have 
\begin{equation*}
    \det\left(P^\top P\right)=\det(V^\top V)\det\left(I_{td}+\sum^t_{j=2}\sum^t_{k=j}
S_{k,j-1}\otimes A^{i_{j-1}-i_k}\right)^2.
\end{equation*}
The term $\sum^t_{j=2}\sum^t_{k=j}
S_{k,j-1}\otimes A^{i_{j-1}-i_k}$ only contains matrices of the form $S_{k,j-1}$, with $k\geq j$, which are all strictly lower triangular. Then it is easy to see that $I_{td}+\sum^t_{j=2}\sum^t_{k=j}
S_{k,j-1}\otimes A^{i_{j-1}-i_k}$ is a block-lower triangular matrix with $I_d$'s on its main block-diagonal. On the other hand, the determinant of a block-lower triangular matrix equals the determinant of the block-diagonal matrix formed by its diagonal blocks (see \cite[Section 0.9.4]{horn_johnson_2012}). This implies that
\[
\det\left(I_{td}+\sum^t_{j=2}\sum^t_{k=j}
S_{k,j-1}\otimes A^{i_{j-1}-i_k}\right)=\det\left(I_{td}\right)=1.
\]
On other hand, by the orthogonality of the blocks $V_1,\ldots,V_t$ that form $V$, it is easy to see that $V^\top V$ is block diagonal, of the form, 
%
\[
V^\top V=\operatorname{blkdiag}\left(V^\top_1V_1,V^\top_2V_2,\ldots,V^\top_tV_t\right)=\begin{bmatrix}
    V^\top_1V_1&0&\cdots&\cdots&0\\
    0&V^\top_2V_2&0&\cdots&0\\
    \vdots&\vdots&\ddots&\cdots&0\\
    0&\ldots&\ldots&\ldots&V^\top_tV_t
\end{bmatrix}.
\]
Given that the determinant of a block diagonal matrix is the product of the determinant of its blocks, we obtain 
\begin{align*}
\det\left(P^\top P\right) = \det(V^\top V) =  \prod^t_{k=1}\det(V^\top_kV_k),
\end{align*}
%
%
where we note that 
\[
V^\top_k V_k = 
\begin{cases}
\sum^{i_k-i_{k-1}-1}_{l=0}(A^{l})^\top A^l, \text{ for } k\in[t-1] \\
\sum^{i_t-1}_{l=0} (A^{l})^\top A^l,\text{ for } k=t.
\end{cases}
\]
In particular, since $V^\top_k V_k \succeq I_d$ for each $k$, this implies $\det\left(P^\top P\right) \geq 1$.
\subsection{Proof of Lemma \ref{lem:gerschgorin_PtP}}\label{app:proof_gerschgorin_PtP}
We will again use $A$ instead of $A^*$ to ease notation. Notice that $P^\top P$ has the following block structure
\renewcommand{\arraystretch}{1.5} 
\begin{equation*}
    P^\top P = \begin{bmatrix}
    P^\top_{i_1}P_{i_1}& P^\top_{i_1}P_{i_2}&\cdots & P^\top_{i_1}P_{i_t}\\
    P^\top_{i_2}P_{i_1}& P^\top_{i_2}P_{i_2}&\cdots & P^\top_{i_2}P_{i_t}\\
    \vdots& \ddots &\cdots&\vdots \\
    P^\top_{i_t}P_{i_1}& P^\top_{i_t}P_{i_2}&\cdots & P^\top_{i_t}P_{i_t}
    \end{bmatrix}.
\end{equation*}
\renewcommand{\arraystretch}{1} 
In order to bound the eigenvalues of $P^\top P$ we use \cite[Theorem 1.13.1]{Tretter2008} which generalizes the Gershgorin disk theorem to the block matrix case. In particular, it says that 
\[
\operatorname{spec}(P^\top P)\in \cup^t_{k=1}\calG_k, 
\]
where 
\[
\calG_k:=\operatorname{spec}(P^\top_{i_k}P_{i_k})\cup \left\{\lambda\notin \operatorname{spec}(P^\top_{i_k}P_{i_k}):dist\big(\lambda,\operatorname{spec}(P^\top_{i_k}P_{i_k})\big)\leq \sum^t_{\substack{k'=1\\k'\neq k}}\|P^\top_{i_k}P_{i_k}\|_2 \right\}.
\]
Using the above result, we have the following estimates (we use $A$ instead of $A^*$ to ease notation).
\begin{enumerate}[label=(\alph*)]
    \item For $k\in[t]$, we have 
    \begin{align*}
    P^\top_{i_k}P_{i_k} &= \sum^{i_k-1}_{j=0}(A^j)^\top A^j=I_d + \sum^{i_k-1}_{j=1}(A^j)^\top A^j, \text{ and }\\
    \norm{\sum^{i_k-1}_{j=1}(A^j)^\top A^j}_2 &\leq \sum^{i_k-1}_{j=1}\|A\|^{2j}_2
    =\frac{\|A\|^2_2}{1-\|A\|^2_2}=:\delta(A).
    \end{align*}
    Then, $\operatorname{spec}(P^\top_{i_k}P_{i_k})\in [1-\delta(A),1+\delta(A)]$.

    \item For $k',k\in [t]$, with $k\neq k'$, we distinguish the following two cases.

    \begin{enumerate}[label=(\alph{enumi}.\arabic*)]
    \item For $k'$ such that $i_{k'}<i_k$, we have 
    \begin{align*}
        P^\top_{i_k}P_{i_{k'}}=A^{i_k-i_{k'}}\big(I+A^2+A^4+\ldots+A^{2(i_{k'}-1)}\big),
    \end{align*}
    which implies that 
    \begin{align*}
    \|P^\top_{i_k}P_{i_{k'}}\|_2 \leq \|A\|^{i_k-i_{k'}}_2\left(\frac{1-\|A\|^{2i_{k'}}_2}{1-\|A\|^2_2}\right)
    \leq \frac{\|A\|^{i_k-i_{k'}}_2}{1-\|A\|^2_2}.
    \end{align*}
    From above, we obtain 
    \[
    \sum_{k':i_{k'}<i_k}\|P^\top_{i_k}P_{i_{k'}}\|_2\leq \frac{\|A\|_2}{(1-\|A\|_2)(1-\|A\|^2_2)}.
    \]
    \item For $k'$ such that $i_{k'}>i_k$, we have 
    \[
    P^\top_{i_k}P_{i_{k'}}=A^{i_{k'}-i_{k}}\big(I+A^2+A^4+\ldots+A^{2(i_{k}-1)}\big),
    \]
    from which we obtain
    \begin{align*}
    \|P^\top_{i_k}P_{i_{k'}}\|_2&\leq \|A\|^{i_{k'}-i_{k}}_2\left(\frac{1-\|A\|^{2i_{k}}_2}{1-\|A\|^2_2}\right)
    \leq \frac{\|A\|^{i_k-i_{k'}}_2}{1-\|A\|^2_2}.
    \end{align*}
    Hence, 
     \[
    \sum_{k':i_{k'}>i_k}\|P^\top_{i_k}P_{i_{k'}}\|_2\leq \frac{\|A\|_2}{(1-\|A\|_2)(1-\|A\|^2_2)}.
    \]
    \end{enumerate}
    Combining the estimates in (b.1) and (b.2), we obtain
    \[
    \sum_{k':i_{k'}\neq i_k}\|P^\top_{i_k}P_{i_{k'}}\|_2\leq \frac{2\|A\|_2}{(1-\|A\|_2)(1-\|A\|^2_2)}:=\kappa(A).
    \]
\end{enumerate}
From the above calculations, we see that (let $\lambda_j^{(k)}$ lie in $\operatorname{spec}(P^\top_{i_k}P_{i_{k}})$)
\begin{align*}
\calG_k&\subseteq [1-\delta(A), 1+\delta(A)]\cup \left\{\cup^d_{j=1}\{\lambda\neq \lambda^{(k)}_j\in \operatorname{spec}(P^\top_{i_k}P_{i_{k}}):|\lambda-\lambda^{(k)}_j|\leq \kappa(A)\}\right\}\\
&\subseteq [1-\delta(A)-\kappa(A),1+\delta(A)+\kappa(A)]\\
&=\left[1-\frac{\|A\|^2_2}{1-\|A\|^2_2}-\frac{2\|A\|_2}{(1-\|A\|_2)(1-\|A\|^2_2)},1+\frac{\|A\|^2_2}{1-\|A\|^2_2}+\frac{2\|A\|_2}{(1-\|A\|_2)(1-\|A\|^2_2)}\right]
\end{align*}
Now, 
\begin{align*}
    1+\frac{\|A\|_2^2}{1-\|A\|^2_2}+\frac{2\|A\|_2}{(1-\|A\|_2)(1-\|A\|^2_2)}&\leq \frac{1}{1-\|A\|^2_2}+\frac{2\|A\|_2}{(1-\|A\|_2)(1-\|A\|^2_2)}\\
    &\leq \frac{1-\|A\|_2^2}{(1-\|A\|^2_2)(1-\|A\|_2)}\\
    &\leq \frac{1}{(1-\|A\|_2)^2}.
\end{align*}
Then,
\[
\lambda_1\left(P^\top P\right)\leq \frac1{(1-\|A\|_2)^2}.
\]
\section{Proof of Theorem \ref{thm:main_upper_bound}}\label{app:proof_main_upper_bound}
The proof of Theorem \ref{thm:main_upper_bound} follows directly from the next three lemmas. 
\begin{lemma}\label{lem:error_bound_genA_regime1}
    Let $s_0:=2^{1/d}$, and assume $A^*$ satisfies $\|A^*\|_2<1$. Suppose that
    \[
    \sigma^2\leq \frac{(1-\|A^*\|_2)^5}{4(s^{\omega(1)}_0T^{4/d}-1)}.
    \]
    Then we have $\expec[|\calE|]\to 0$, when $T\to\infty$. In particular, $|\calE|=0$ with high probability.
\end{lemma}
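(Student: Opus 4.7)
The plan is to combine the first-moment inequality \eqref{eq:first_moment} with the uniform augmenting-cycle probability bound from Propositions \ref{prop:tcycles} and \ref{prop:det_bound}, and then convert the hypothesis on $\sigma^2$ into a lower bound on the determinant factor strong enough to kill the combinatorial count of cycles.

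First, set $\gamma := \frac{(1-\|A^*\|_2)^5}{4\sigma^2} + 1$. Propositions \ref{prop:tcycles} and \ref{prop:det_bound} together give the uniform estimate
\[
\prob(C_t \text{ is augmenting}) \le \gamma^{-(t-1)d/2}
\]
for every $t$-cycle $C_t$ in $[T]$. Since the number of such (ordered) cycles is at most $T^t$, \eqref{eq:first_moment} yields
\[
\expec[|\calE|] \le \sum_{t=2}^{T} t\, T^{t}\, \gamma^{-(t-1)d/2}.
\]
The hypothesis rearranges to $\gamma \ge s_0^{h(T)}\, T^{4/d}$ for some $h(T) \to \infty$; using $s_0^d = 2$, raising to the $(t-1)d/2$ power gives $\gamma^{(t-1)d/2} \ge 2^{h(T)(t-1)/2}\, T^{2(t-1)}$. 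Plugging this in,
\[
\expec[|\calE|] \le \sum_{t=2}^{T} t\, T^{2-t}\, M_T^{t-1}, \qquad M_T := 2^{-h(T)/2}.
\]

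To finish, note that $M_T \to 0$. The $t=2$ contribution is $2 M_T \to 0$; for $t \ge 3$ one has $T^{2-t} \le 1$, so the tail is at most $\sum_{t \ge 3} t M_T^{t-1} = O(M_T^2)$ (a standard geometric-series estimate valid once $M_T < 1/2$). Hence $\expec[|\calE|] \to 0$, and Markov's inequality applied to the integer-valued variable $|\calE|$ gives $\prob(|\calE| \ge 1) \le \expec[|\calE|] \to 0$, i.e.\ $|\calE| = 0$ with high probability.

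The heavy lifting has already been done in Propositions \ref{prop:tcycles} and \ref{prop:det_bound}, so the present argument is essentially bookkeeping. The only subtle point is that the exponent $4/d$ in the hypothesis on $\sigma^2$ is precisely calibrated so that the dominant $t=2$ term---in which $T^t \cdot \gamma^{-(t-1)d/2}$ becomes $T^{2}/\gamma^{d/2}$---is driven to zero by the $s_0^{\omega(1)}$ factor; any weaker dependence on $T$ would leave that term uncontrolled. This is what fixes the form of the threshold in the statement.
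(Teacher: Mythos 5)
Your proof is correct and takes essentially the same route as the paper: apply the first-moment bound, invoke Propositions \ref{prop:tcycles} and \ref{prop:det_bound} for the per-cycle probability, convert the hypothesis on $\sigma^2$ into a lower bound $\gamma \ge s_0^{\omega(1)} T^{4/d}$, and show the resulting series is $o(1)$. The only cosmetic difference is bookkeeping: the paper uses the sharper count $T^t/t$ of $t$-cycles (so the prefactor $t$ cancels) and works throughout with logarithms, while you keep the factor $t$ and isolate $M_T = 2^{-h(T)/2}$ explicitly before summing the geometric series; both lead to the same conclusion.
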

\begin{proof}
    From Proposition \ref{prop:det_bound}, we get
    \begin{align*}
        \log\det\left(\frac{(1-\|A^*\|_2)^3}{4\sigma^2}L+I_{i_1d}\right)^{-\frac12}&\leq -\frac d2(t-1)\log\left(\frac{(1-\|A^*\|_2)^5}{4\sigma^2} +1\right)\\
        &\leq -\frac d 4 t\log\left(\frac{(1-\|A^*\|_2)^5}{4\sigma^2} +1\right),\nonumber
    \end{align*}
    where the last inequality follows from the fact that $2(t-1)\geq t$, for $t\geq2$. From this, we deduce, 
    \begin{align}
        t\log T+\log{\det\left(\frac{(1-\|A^*\|_2)^5}{4\sigma^2}L+I_{i_1d}\right)^{-\frac 12}}&\leq (t\log T)\left(1-\frac d{4\log T}\log{\left(\frac{(1-\|A^*\|_2)^5}{4\sigma^2}+1\right)}\right)\nonumber\\
        &\leq (t\log T) \left(1-\frac{d}{4\log T}\log{s^{\omega(1)}_0T^{\frac4d}}\right)\nonumber\\
        &\leq (t\log T)\left(-\omega(1)\frac{\log 2}{\log T}\right)\nonumber\\\label{eq:bound_for_expect}
        &\leq -t\omega(1).
    \end{align}
    From \eqref{eq:first_moment}, \eqref{eq:augmenting_proba_bound} and the fact that number of $t$-cycles in $[T]$ is bounded by $T^t/t$, we obtain, 
    \begin{align*}
        \expec[|\calE|]&\leq \sum^T_{t=2}\exp{\left(t\log T+ \log{\det\left(\frac{(1-\|A^*\|_2)^5}{4\sigma^2}L+I_{i_1d}\right)^{-\frac 12}}\right)}\\
        &\leq \sum^T_{t=2}(e^{-\omega(1)})^t=o(1),
    \end{align*}
    where in the second line we used \eqref{eq:bound_for_expect}.
    The high probability statement follows directly from Markov's inequality. 
\end{proof}
When $\|A^*\|_2=0$, our result recovers up-to-constants the results in \cite{KuniskyWeed}. 
%
Similarly, we have an analogous result to \cite[Lemma 3.4]{KuniskyWeed}, which leads to a constant error upper bound. 
\begin{lemma}\label{lem:error_bound_genA_regime2}
    Let $s_0=2^{1/d}$, and assume $A^*$ satisfies  $\|A^*\|_2<1$. Suppose that  
    \[
    \sigma^2\leq \frac{(1-\|A^*\|_2)^5}{4(s^{O(1)}_0T^{4/d}-1)}.
    \]
    Then, $\expec[|\calE|]=O(1)$. In particular, for any function $f(T)=w(1)$, we have $|\calE|\leq f(T)$ with high probability. 
\end{lemma}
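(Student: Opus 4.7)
The plan is to mirror the structure of the proof of Lemma \ref{lem:error_bound_genA_regime1}, replacing the super-constant decay of the per-cycle bound by a constant-decay version. Concretely, I would combine \eqref{eq:first_moment} (first moment method over the number of augmenting $t$-cycles) with Proposition \ref{prop:tcycles}, using the standard counting estimate $T^t/t$ on the number of $t$-cycles in $[T]$, to obtain
\begin{equation*}
    \expec[|\calE|]\leq \sum^T_{t=2} \exp\!\left(t\log T + \log\det\!\left(\tfrac{(1-\|A^*\|_2)^3}{4\sigma^2}L+I_{i_1d}\right)^{-\frac12}\right).
\end{equation*}

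The next step is to invoke Proposition \ref{prop:det_bound}, which yields
\begin{equation*}
    \log\det\!\left(\tfrac{(1-\|A^*\|_2)^3}{4\sigma^2}L+I_{i_1d}\right)^{-\frac12} \leq -\frac{d}{2}(t-1)\log\!\left(\tfrac{(1-\|A^*\|_2)^5}{4\sigma^2}+1\right) \leq -\frac{d}{4}t\log\!\left(\tfrac{(1-\|A^*\|_2)^5}{4\sigma^2}+1\right).
\end{equation*}
Under the hypothesis $\sigma^2\leq \tfrac{(1-\|A^*\|_2)^5}{4(s_0^{O(1)}T^{4/d}-1)}$, the quantity $\tfrac{(1-\|A^*\|_2)^5}{4\sigma^2}+1$ is at least $s_0^{C}T^{4/d}$ for some constant $C>0$, so $\tfrac{d}{4}\log(\cdot) \geq \log T + c$ for some constant $c>0$ (here $s_0^d=2$ absorbs into the constant). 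This gives $t\log T + \log\det(\cdot)^{-1/2}\leq -ct$ for an absolute constant $c>0$, exactly analogous to \eqref{eq:bound_for_expect} but with $\omega(1)$ replaced by a constant.

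Substituting back, the series becomes a convergent geometric sum:
\begin{equation*}
    \expec[|\calE|]\leq \sum^T_{t=2} e^{-ct} \leq \frac{e^{-2c}}{1-e^{-c}} = O(1),
\end{equation*}
which proves the first claim. For the high-probability statement, Markov's inequality gives $\prob(|\calE|\geq f(T))\leq \expec[|\calE|]/f(T)=O(1/f(T))$, which tends to $0$ whenever $f(T)=\omega(1)$.

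There is really no conceptual obstacle, since Propositions \ref{prop:tcycles} and \ref{prop:det_bound} already do the heavy lifting and were proved for general $t$-cycles under $\|A^*\|_2<1$. The only mild subtlety is to verify carefully that replacing $s_0^{\omega(1)}$ by $s_0^{O(1)}$ in the hypothesis still leaves enough slack so that after subtracting $\log T$, a strictly positive constant survives in the exponent of the per-$t$ term. This is where the constant $C$ in the $O(1)$ must be chosen large enough relative to the constants hidden in the $4/d$ factor; once the inequality $\tfrac{d}{4}\log(\tfrac{(1-\|A^*\|_2)^5}{4\sigma^2}+1)\geq \log T + c$ is secured, the remainder is routine.
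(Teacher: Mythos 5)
Your proposal is correct and takes essentially the same approach as the paper: the paper's own proof of this lemma simply states that it ``mimics the argument in the proof of Lemma \ref{lem:error_bound_genA_regime1}, but exchanging $\omega(\cdot)$ by $\calO(\cdot)$,'' then invokes Markov. You have fleshed out precisely that calculation (correctly noting the role of $s_0^d = 2$ and the need for a strictly positive surviving constant $c$), so there is nothing further to add.
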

\begin{proof}
    The proof mimics the argument in the proof of Lemma \ref{lem:error_bound_genA_regime1}, but exchanging $\omega(\cdot)$ by $\calO(\cdot)$. This shows that $\expec[|\calE|]=\calO(1)$, and the rest of the argument follows from an application of Markov's inequality. 
\end{proof}
We now establish an analogue of \cite[Lemma~3.5]{KuniskyWeed}. 
Our proof proceeds in the same manner as the previous lemmas. 
Interestingly, in \cite{KuniskyWeed} the argument for this lemma differs slightly from the others, 
whereas in our case no such modifications are required.
\begin{lemma}\label{lem:error_bound_genA_regime3}
    Let $s_0=2^{1/d}$, and assume $A^*$ satisfies $\|A^*\|_2<1$. Suppose that   
    \[
    \sigma^2\leq \frac{(1-\|A^*\|_2)^5}{4(s^{\omega(1)}_0T^{2/d}-1)}.
    \]
    Then, 
    \begin{align*}
        \expec[|\calE|]=\calO\left(\left(\frac{(1-\|A^*\|_2)^5}{4\sigma^2}+1\right)^{-\frac d2}T^2\right).
    \end{align*}
\end{lemma}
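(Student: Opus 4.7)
The plan is to mimic the argument used for Lemmas~\ref{lem:error_bound_genA_regime1} and~\ref{lem:error_bound_genA_regime2}, but this time keep track of the leading term of the resulting geometric series rather than merely showing it is $o(1)$ or $O(1)$. Starting from \eqref{eq:first_moment}, Proposition~\ref{prop:tcycles}, Proposition~\ref{prop:det_bound}, and the fact that the number of $t$-cycles in $[T]$ is at most $T^t/t$, I would first write
\begin{equation*}
    \expec[|\calE|] \;\leq\; \sum_{t=2}^{T} t \cdot \frac{T^t}{t} \left(\frac{(1-\|A^*\|_2)^5}{4\sigma^2}+1\right)^{-(t-1)d/2}
    \;=\; \alpha^{d/2} \sum_{t=2}^{T} \left(\frac{T}{\alpha^{d/2}}\right)^{t},
\end{equation*}
where for brevity I set $\alpha := \frac{(1-\|A^*\|_2)^5}{4\sigma^2}+1$.

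Next I would translate the hypothesis on $\sigma$ into a lower bound on $\alpha$. The assumption $\sigma^2 \leq (1-\|A^*\|_2)^5/(4(s_0^{\omega(1)} T^{2/d}-1))$ gives $\alpha \geq s_0^{\omega(1)} T^{2/d}$, and hence, using $s_0^{d} = 2$,
\begin{equation*}
    \alpha^{d/2} \;\geq\; s_0^{\omega(1)\, d/2} \, T \;=\; 2^{\omega(1)/2}\, T,
\end{equation*}
so the ratio $q := T/\alpha^{d/2}$ is $o(1)$.

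Finally I would sum the geometric series: since $q = o(1) \leq 1/2$ for all $T$ sufficiently large,
\begin{equation*}
    \sum_{t=2}^{T} q^{t} \;\leq\; \frac{q^{2}}{1-q} \;=\; O(q^{2}) \;=\; O\!\left(\frac{T^{2}}{\alpha^{d}}\right),
\end{equation*}
and therefore
\begin{equation*}
    \expec[|\calE|] \;\leq\; \alpha^{d/2} \cdot O\!\left(\frac{T^{2}}{\alpha^{d}}\right) \;=\; O\!\left(\left(\frac{(1-\|A^*\|_2)^5}{4\sigma^2}+1\right)^{-d/2} T^{2}\right),
\end{equation*}
which is the desired bound.

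There is no real technical obstacle here; the only thing to be careful about is ensuring that the $\omega(1)$ decay of $q$ guarantees that the tail of the geometric series is absorbed into the $t=2$ term (so the $T^2$ factor, and not a larger power of $T$, governs the bound). The work lies entirely in the earlier Propositions~\ref{prop:tcycles} and~\ref{prop:det_bound}; once those are in hand, this regime simply reads off the dominant $t=2$ contribution.
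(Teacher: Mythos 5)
Your proof is correct and follows essentially the same route as the paper's: combine \eqref{eq:first_moment}, Proposition~\ref{prop:tcycles}, Proposition~\ref{prop:det_bound}, and the $T^t/t$ bound on the number of $t$-cycles to obtain a geometric series in $t$, then use the hypothesis on $\sigma$ to show the ratio $q = T/\alpha^{d/2}$ is $o(1)$ so that the $t=2$ term dominates. The only difference is presentational: the paper works in log-space, isolating $2\log T - \tfrac{d}{2}\log\alpha$ and a leftover $-\omega(1)(t-2)$ decay before summing, whereas you sum the geometric series directly; the underlying computation is identical.
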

\begin{proof}
    Recall that by Proposition \ref{prop:det_bound}, we have 
    \begin{align*}
        t\log{T}+\log{\det\left(\frac{(1-\|A^*\|_2)^5}{4\sigma^2}L+I_{i_1d}\right)^{-\frac 12}}&\leq t\log{T}-\frac d2(t-1)\log\left(\frac{(1-\|A^*\|_2)^5}{4\sigma^2}+1\right)\\
        &=2\log{T}+(t-2)\log{T} -\frac d 2 (t-2)\log\left(\frac{(1-\|A^*\|_2)^5}{4\sigma^2}+1\right)\\
        &\quad-\frac d 2\log\left(\frac{(1-\|A^*\|_2)^5}{4\sigma^2}+1\right)\\
        &\leq 2\log{T}-\frac d 2\log\left(\frac{(1-\|A^*\|_2)^5}{4\sigma^2}+1\right)-\omega(1)(t-2).
    \end{align*}
    Then, 
    \begin{align*}
        \expec[|\calE|]&\leq \sum^T_{t=2}\exp{\left(t\log{T}+\log{\det\left(\frac{(1-\|A^*\|_2)^5}{4\sigma^2}L+I_{i_1d}\right)^{-\frac 12}}\right)}\\
        &\leq \sum^T_{t=2}\left(\frac{(1-\|A^*\|_2)^5}{4\sigma^2}+1\right)^{-\frac d2}T^2(e^{-\omega(1)})^{t-2}\\
        &=\calO\left(\left(\frac{(1-\|A^*\|_2)^5}{4\sigma^2}+1\right)^{-\frac d2}T^2\right).
    \end{align*}
\end{proof}

\section{Additional experiments}\label{sec:additional_experiments}
In this section, we include additional experiments that complement the experiments in the main paper. In Section \ref{sec:est_A_first} we show the performance of the estimation method described in Remark \ref{rem:estimate_A_first}. 
Finally, in Section \ref{sec:other_algs_Birkhoff} we implement other algorithms for the Birkhoff polytope relaxation and show that they all have similar performance. 

\subsection{Estimate \texorpdfstring{$A^*$}{A*} first}\label{sec:est_A_first}
One natural approach for estimating $\Pi$ consists in the two-step strategy described in Remark~\ref{rem:estimate_A_first}. Here, first $A^*$ is estimated using only the time series $(x_t)_{t\in[T]}$, for which the right order is known, via least-squares (the MLE for estimating $A^*$ under the \textbf{VAR} model). Call this estimator $\est{A}_{\operatorname{MLE}}$. Then we solve \eqref{eq:relaxed_general} replacing $A$ with $\est{A}_{\operatorname{MLE}}$. We show the results in the regimes $d=5,T=50$ and $d=50, T=5$, in Figure \ref{fig:estimate_A_first} using the Mirror Descent (MD) algorithm for the simplex relaxation. We observe that in the regime $d=5,\, T=50$, the performance under the true and the estimated $A^*$ are very similar, likely because the time series is long enough so that $\est{A}_{\operatorname{MLE}}$ is already close to $A^*$. On the other hand, for $d=50,\,T=5$, the performance of \texttt{RelaxMLE-Round} with $\est{A}_{\operatorname{MLE}}$ degrades as the noise increases. In the considered regime, the LA estimator and \texttt{RelaxMLE-Round} with $A^*$ both achieve perfect recovery. 
\begin{figure}[ht]
    \centering
    \begin{subfigure}[b]{0.45\textwidth}
        \centering
        \includegraphics[width=0.95\linewidth,height=0.3\textheight,keepaspectratio]{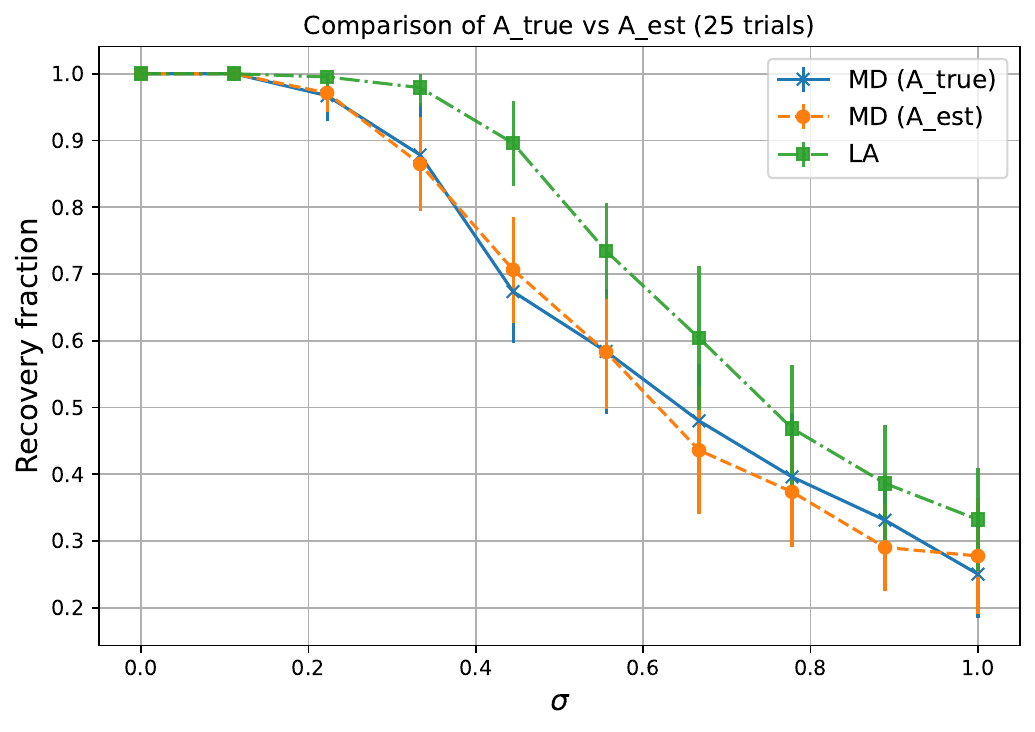}
        \caption{$d=5$, $T=50$}
        \label{fig:sub1_app_A_first}
    \end{subfigure}
    \begin{subfigure}[b]{0.45\textwidth}
        \centering
        \includegraphics[width=0.95\linewidth,height=0.3\textheight,keepaspectratio]{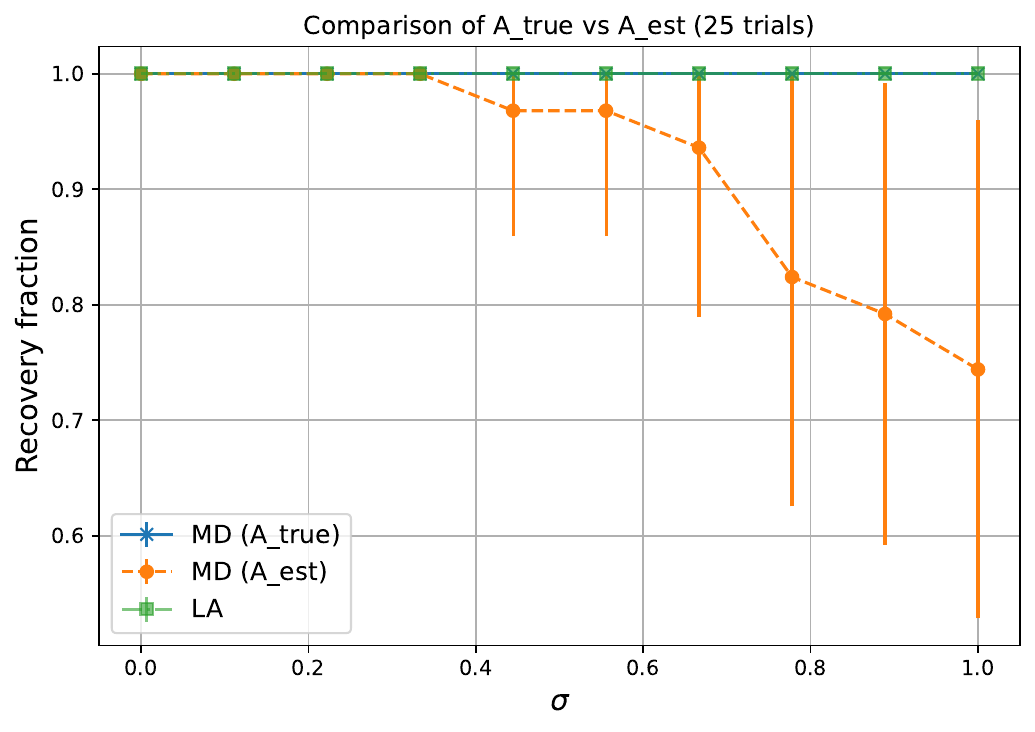}
        \caption{$d=50$, $T=5$}
        \label{fig:sub2_app_A_first}
    \end{subfigure}
 \caption{Recovery fraction vs. $\sigma$ for the MD algorithm, which solves the simplex relaxation. Here $\theta=0.5$ and the average is computed over $30$ Monte Carlo runs. In Fig. \ref{fig:sub2_app_A_first} both the MD estimator with access to $A^*$ and the LA estimator achieve perfect recovery.}
    \label{fig:estimate_A_first}
\end{figure}

\subsection{Other algorithms for the Birkhoff relaxation}\label{sec:other_algs_Birkhoff}
We consider different optimization schemes for the Birkhoff relaxation, including a Frank–Wolfe method, an ADMM-based approach, and a quasi-Newton variant. 
The Frank–Wolfe algorithm \cite{FW} performs iterative linear minimization over the Birkhoff polytope using the gradient of the objective and a line search step, offering a projection-free alternative particularly suited for large-scale problems. 
The ADMM algorithm \cite{2011DistributedOA} enforces the Birkhoff constraints via alternating updates of primal and dual variables, with efficient projections implemented through Dykstra’s algorithm. 
Finally, the quasi-Newton method \cite{quasi_newton} applies an L-BFGS step on the flattened permutation matrix followed by projection onto the Birkhoff polytope, providing a curvature-aware but more computationally demanding alternative. 
In Figure~\ref{fig:diff_algos_Birkhoff}, we compare all three methods for $d=5$, $T=50$, and $\theta=0.5$. We run algorithm \ref{alg:relaxMLE_round}, assuming a known $A^*$, which is sampled at random according to \eqref{eq:parametric_A*}.
In this setting, all algorithms achieve similar recovery performance, and this pattern remains consistent across the different parameter combinations we tested. 

\begin{figure}
    \centering
    \includegraphics[width=0.5\linewidth]{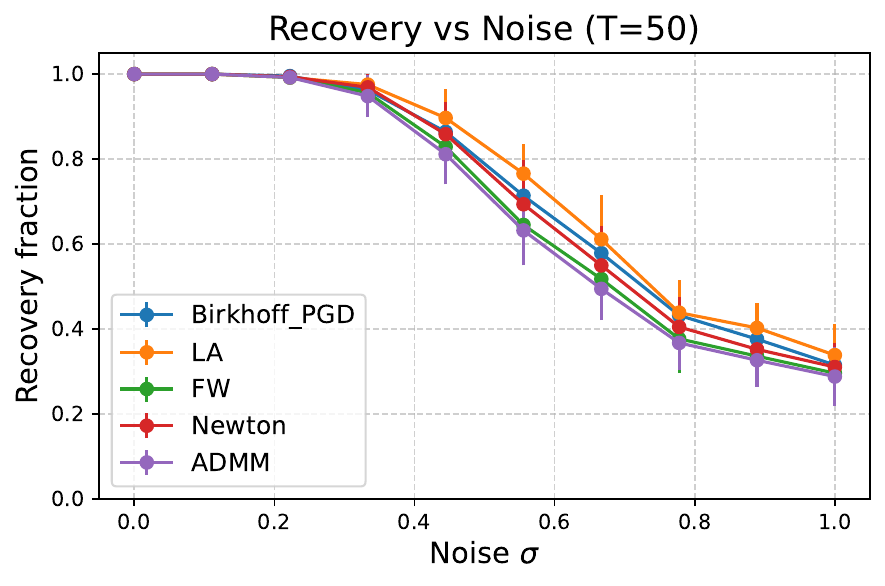}
    \caption{Comparison between different algorithms to solve \eqref{eq:relaxed_general} on the Birkhoff polytope. We evaluate them in the setting of Algorithm \ref{alg:relaxMLE_round}. Here we consider the setting of known $d=5,T=50$, $\theta=0.5$ and different levels of noise $\sigma$.}
    \label{fig:diff_algos_Birkhoff}
\end{figure}
\end{document}